
\documentclass{aic}

\aicAUTHORdetails{%
  title = {There are Only a Finite Number of Excluded Minors for the Class of Bicircular Matroids}, 
  author = {Matt DeVos, Daryl Funk, Luis Goddyn, and Gordon Royle},
  plaintextauthor = {Matt DeVos, Daryl Funk, Luis Goddyn, Gordon Royle},
    %
    %
    %
  runningtitle = {Excluded minors for bicircular matroids}, 
    %
    %
   %
}   

\aicEDITORdetails{%
   year={2023},
   number={7},
   received={22 February 2021},   
   revised={4 November 2022},    
   published={17 November 2023},  
   doi={10.19086/aic.2023.7},      
}   

\usepackage{amsthm}
\usepackage[shortlabels]{enumitem}
\usepackage{mathrsfs}
\usepackage{hyperref} 
\newtheorem{mainthm}{Theorem}
\newtheorem{thm}{Theorem}[section]
\newtheorem{lem}[thm]{Lemma}
\newtheorem{prop}[thm]{Proposition} 
\newtheorem*{claim}{Claim}
\newtheorem{claimnumbered}{Claim}[thm]
\DeclareMathOperator{\Star}{star}  
\DeclareMathOperator{\cl}{cl}  
\DeclareMathOperator{\co}{co}  
\DeclareMathOperator{\si}{si}  
\newcommand{\del}{\backslash}
\newcommand{\inv}{^{-1}}
\newcommand{\sd}{\triangle}
\newcommand{\Bb}{\mathcal B}
\newcommand{\Ii}{\mathcal I}
\newcommand{\Cc}{\mathcal C}
\newcommand{\Ggg}{\mathscr G}

\begin{document}

\begin{frontmatter}[classification=text]


\author[matt]{Matt DeVos}
\author[daryl]{Daryl Funk}
\author[luis]{Luis Goddyn}
\author[gordon]{Gordon Royle}

\begin{abstract}
We show that the class of bicircular matroids has only a finite number of excluded minors. 
Key tools used in our proof include representations of matroids by biased graphs and the recently introduced class of quasi-graphic matroids. 
We show that if $N$ is an excluded minor of rank at least {ten}, then $N$ is quasi-graphic. 
Several small excluded minors are quasi-graphic. 
Using biased-graphic representations, we find that $N$ already contains one of these. 
We also provide an upper bound, in terms of rank, on the number of elements in an excluded minor, so the result follows. 
\end{abstract}
\end{frontmatter}


\section{Introduction}

Let $G=(V,E)$ be a graph, possibly with loops or multiple edges. 
The subgraphs of $G$ that are minimal with respect to having more edges than vertices are the \emph{bicycles} of $G$ 
{(Figure \ref{figbicycles})}. 
\begin{figure}[tp] 
\begin{center} 
\includegraphics[scale=0.95]{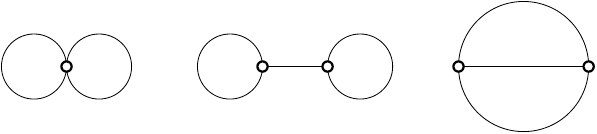}
\end{center} 
\caption{{A bicycle is a subdivision of one of these three graphs.}}
\label{figbicycles} 
\end{figure} 
The \emph{bicircular matroid} of $G$, denoted $B(G)$, is the matroid on ground set $E$ whose circuits are the edge sets of bicycles of $G$. 
A loopless matroid $M$ is \emph{bicircular} if there is a graph $G$ for which $B(G)$ is isomorphic to $M$. 

The class of bicircular matroids is minor-closed (after allowing for loops, for instance by including direct sums with rank-0 uniform matroids). 
{We have found twenty-seven excluded minors for the class of bicircular matroids by search using a computer}
{(see \hyperref[appendix]{Appendix}).}
We conjecture that this list is complete. 
Here we prove that the list is finite: 

\begin{mainthm} \label{main} 
There are only a finite number of excluded minors for the class of bicircular matroids. 
\end{mainthm}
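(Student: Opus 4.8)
The plan is to derive finiteness from two quantitative bounds: an upper bound on the rank of any excluded minor, and, for each fixed rank $r$, an upper bound on the number of elements. Together these confine the excluded minors to finitely many isomorphism classes. As a preliminary reduction I would first argue that every excluded minor $N$ may be taken to be $3$-connected (or at least vertically $3$-connected), by examining how bicircular matroids behave under $1$- and $2$-sums; this lets me work with a single well-behaved biased-graph representation of each single-element minor $N\setminus e$ and $N/e$.

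For the rank bound I would invoke the structural result that any excluded minor of rank at least eight is quasi-graphic. Let $N$ be such an excluded minor, and fix a biased-graph representation $\Omega$ afforded by its quasi-graphic structure. Since $N$ is not bicircular, $\Omega$ cannot be a biased graph in which no cycle is balanced; the deviation of $\Omega$ from this ``all-unbalanced'' form is exactly what I would exploit. Concretely, I would show that a large-rank $\Omega$ representing a matroid all of whose single-element minors are bicircular must contain a small, highly structured biased subgraph whose frame matroid is one of the known small quasi-graphic excluded minors. This produces a \emph{proper} minor of $N$ that is itself an excluded minor, which is impossible, since every proper minor of an excluded minor is bicircular. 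Hence no excluded minor has rank eight or more, bounding the rank by seven.

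For the size bound I would fix the rank $r$ and bound $|E(N)|$ as a function of $r$. Here I would use that each $N\setminus e$ and $N/e$ is bicircular, so each has a biased-graph representation on a graph with about $r$ vertices. The remaining task is to control edge multiplicities: in a bicircular matroid of rank $r$, the number of edges joining a fixed pair of vertices and the number of loops at a fixed vertex are each bounded (three parallel edges, or two loops at a vertex, already form a circuit), so a simple bicircular matroid of rank $r$ carries $O(r^2)$ elements. Transferring this control from the single-element minors back to $N$ via $3$-connectivity, together with a bound on how many parallel and series elements $N$ can carry, yields $|E(N)| \le f(r)$ for an explicit function $f$.

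The main obstacle I anticipate lies in the rank-reduction step rather than the comparatively routine counting in the size bound. A quasi-graphic matroid can admit several inequivalent biased-graph representations, so locating a prescribed small excluded minor inside a high-rank $N$ requires controlling all of these representations simultaneously and ruling out the degenerate configurations that biased graphs permit (unbalanced subgraphs, balancing vertices, and the delicate interaction of loops with the frame structure). Managing this case analysis is where I expect the real work to be.
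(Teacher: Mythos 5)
Your overall skeleton coincides with the paper's: show excluded minors are vertically 3-connected, bound the rank at seven by showing that a high-rank excluded minor would properly contain one of a short list of small excluded minors, and bound the number of elements as a function of rank (your $O(r^2)$ count is essentially the paper's bound $|E(N)| \le 3\binom{r}{2}+r+4$, proved via clonal/``lonely'' element arguments of the sort you sketch). The size-bound portion of your proposal is therefore fine as a sketch.

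The genuine gap is in the rank bound. You propose to ``invoke the structural result that any excluded minor of rank at least eight is quasi-graphic,'' but there is no such result available to invoke: it is the central theorem this paper must prove, and your proposal contains no mechanism for proving it. An excluded minor comes with no framework and no biased-graph representation $\Omega$ whatsoever --- producing one is exactly what is at stake. The paper manufactures it as follows: it finds a three-element set $X\subseteq E(N)$ such that $N/Z$ is vertically 3-connected for every $Z\subseteq X$, and constructs a bicircular \emph{twin} $M$ of $N$ relative to $X$, i.e.\ a bicircular matroid on $E(N)$ with $M/Z=N/Z$ for all $Z\subseteq X$. This construction occupies two full sections and rests on Wagner's and Coullard--del Greco--Wagner's characterisation of the graphs representing a vertically 3-connected bicircular matroid of rank at least five (rolling, rotation, replacement operations), together with substantial connectivity-repair work to reach a situation where those theorems apply. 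Only then does quasi-graphicness follow: minimal sets on which $N$ and $M$ disagree in rank are shown to be circuits of $N$ that induce either cycles of the graph $G$ representing $M$ (with each element of $X$ a chord) or bracelets (with each element of $X$ joining the two cycles), and the bracelet-function characterisation of quasi-graphic matroids then certifies that $G$ is a framework for $N$; the same structure is what drives the subsequent hunt for the small excluded minors. Your closing paragraph correctly identifies the multiplicity of representations as the hard point, but anticipating the difficulty is not the same as having the idea that resolves it --- the twin construction relative to a contraction-robust triple $X$ --- and without that idea the rank-reduction step, and hence the finiteness proof, does not get off the ground.
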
 

We prove Theorem \ref{main} by showing that there are no excluded minors of rank greater than
nine (Theorem \ref{Rachel}), 
then bounding the number of elements, in terms of rank, that an excluded minor may have (thus proving Theorem \ref{purple}). 

Bicircular matroids form a natural and fundamental class of matroids. 
{They are precisely the transversal matroids arising from set systems in which each element is in at most two sets.}
When considered within the larger classes containing them,  bicircular matroids appear as natural and fundamental as the class of graphic matroids. 
{Like graphic matroids, they form a natural subclass of the class of frame matroids. 
Chen and Geelen have shown that there are infinitely many excluded minors for the class of frame matroids~\cite{CHEN201846}. 
There are many other natural subclasses of frame matroids that have been extensively considered in the literature; signed-graphic matroids and gain-graphic matroids, for instance. Little is known of excluded minors for these classes.}

Perhaps the most natural class of matroids in which we should situate ourselves is the class of \emph{quasi-graphic} matroids, introduced by Geelen, Gerards, and Whittle~\cite{JGT:JGT22177}. 
For a vertex $v$ in a graph $G$, denote by $\Star(v)$ the set of edges incident to $v$ whose other end is in $V(G)-\{v\}$ (thus $\Star(v)$ contains no loops). 
A \emph{framework} for a matroid $M$ is a graph $G$ 
with $E(G)=E(M)$ such that 
(i) the edge set of each component of $G$ has rank in $M$ no larger than the number of its vertices, 
(ii) for each vertex $v \in V(G)$, $\cl_M(E(G-v)) \subseteq E(G) - \Star(v)$, and 
(iii) no circuit of $M$ induces a subgraph of $G$ with more than two components. 
A matroid is \emph{quasi-graphic} if it has a framework. 

Both the cycle matroid $M(G)$ and the bicircular matroid $B(G)$ of any graph $G$ are quasi-graphic, sharing the framework $G$. 
There will be in general many quasi-graphic matroids sharing $G$ as a framework. 
The cycle matroid and the bicircular matroid occupy two extremes amongst the quasi-graphic matroids, in the following sense. 
Let $M$ be a connected quasi-graphic matroid. 
Then $M$ has a connected framework $G$ (by Corollary 4.7 of~\cite{MR4037634}). 
The cycle matroid $M(G)$ of $G$ is the tightest possible quasi-graphic matroid with framework $G$. 
That is, 
if $X \subseteq E(G)$ is a dependent set in a quasi-graphic matroid with framework $G$, then $X$ is dependent in $M(G)$. 
The bicircular matroid $B(G)$ of $G$ is the freest possible quasi-graphic matroid with framework $G$. 
That is, every independent set in any quasi-graphic matroid with framework $G$ is independent in $B(G)$.  

The class of graphic matroids is minor-closed. 
In 1959 Tutte showed that the class of graphic matroids is characterised by a list of just five excluded minors (see~\cite[Theorem 10.3.1]{oxley}). 
Our strategy for showing that likewise, the bicircular matroids are characterised by a finite list of excluded minors, is as follows. 
Let $N$ be an excluded minor for the class of bicircular matroids. 
We show that $N$ cannot be large, neither in rank 
nor number of elements. 
The more challenging part is in bounding the rank. 
It is not hard to show that $N$ must be vertically 3-connected. 
Under the assumption that $N$ has rank greater than 
{nine}, 
we find a 3-element set $X \subseteq E(N)$ for which $N/Z$ remains vertically 3-connected, for every subset $Z$ of $X$. 
Wagner's characterisation~\cite{MR815399} of the graphs representing a vertically 3-connected bicircular matroid of rank at least five enables us 
to construct a bicircular \emph{twin} $M$ for $N$: that is, a bicircular matroid $M$ with $E(M)=E(N)$ for which $M/Z=N/Z$ for each 
{non-empty} 
subset $Z \subseteq X$. 
We use the twin $M$ to show that $N$ is quasi-graphic, sharing a framework with $M$. 
Using \emph{biased-graphic} representations (described in Section \ref{eintanembau}), it is then relatively straightforward to derive the contradiction that $N$ properly contains an excluded minor for the class of bicircular matroids.

We begin with some elementary facts regarding the structure of bicircular matroids that are not vertically 3-connected, and show that excluded minors are vertically 3-connected (Section \ref{secconsep}). 
In Section \ref{representations} we more precisely describe representations of bicircular matroids by graphs. 
Sections \ref{secfind2} and \ref{findingX} are devoted to constructing a bicircular twin for an excluded minor of rank 
greater than {nine}. 
In Section \ref{wonderwoman}, we use the results of Sections \ref{secfind2} and \ref{findingX} to bound the rank of an excluded minor, at no more than 
{nine}. 
Finally, in Section \ref{greenhornet}, we bound the number of elements in an excluded minor as a function of its rank. 
Together, the main results of Sections \ref{wonderwoman} and \ref{greenhornet} prove Theorem \ref{main}. 

{
A proof that our list of excluded minors is complete will need a new idea. 
We obtained our list by identifying excluded minors in Mayhew and Royle's database of all matroids on up to nine elements~\cite{MR2389607} (see \hyperref[appendix]{Appendix}). 
Combining this exhaustive search with the results of Section \ref{greenhornet}, we now know that any excluded minor not on our list has at least ten elements and rank at most nine. 
Further, we know that when such an excluded minor has rank $r$ it has at most $3{r \choose 2} + r + 4$ elements (Lemma \ref{pen}). 
Unfortunately, this bound is too crude to be of any practical use in either finding an additional excluded minor or ruling one out by exhaustive search (giving an upper bound of 121 elements in a potential excluded minor of rank 9, for instance). 
The gap between nine elements and $3{r \choose 2} + r + 4$ elements with $r \leq 9$ is simply too large. 
}

\section{Connectivity and separations in bicircular matroids} \label{secconsep} 

A \emph{separation} of a matroid $M$ is a partition $(A,B)$ of its ground set; we call $A$ and $B$ the \emph{sides} of the separation. 
A \emph{$k$-separation} of $M$ is a separation $(A,B)$ with both $|A|, |B| \geq k$ and $r(A) + r(B) - r(E) < k$.  
A matroid $M$ is \emph{$k$-connected} if $M$ has no $l$-separation with $l<k$. 
A \emph{vertical $k$-separation} of $M$ is a $k$-separation $(A,B)$ with both $r(A), r(B) \geq k$. 
A matroid is \emph{vertically $k$-connected} if it has no vertical $l$-separation with $l<k$. 
Note that if $(A,B)$ is a 2-separation in a connected matroid that is not vertical, then one of $A$ or $B$ is a subset of a parallel class.

A \emph{separation} of a graph $G$ is a partition $(A,B)$ of its edge set; $A$ and $B$ are its \emph{sides}. 
For a subset $A$ of edges of $G$, we denote by $V(A)$ the set of vertices of $G$ with an incident edge in $A$. 
A \emph{$k$-separation} of $G$ is a separation $(A,B)$ with 
$|V(A) \cap V(B)| = k$. 
A separation $(A,B)$ is \emph{proper} if both $V(A)-V(B)$ and $V(B)-V(A)$ are non-empty. 
A graph is \emph{$k$-connected} if it has at least $k+1$ vertices and has no proper $l$-separation with $l < k$.
A loop in a graph is an edge with just one end and a loop in a matroid is a circuit of size one; context will prevent any possible confusion. 

Let $M$ and $M'$ be matroids on ground sets $E$ and $E'$, respectively. 
Assume $E \cap E' = \{e\}$, and that $e$ is neither a loop nor coloop in either $M$ or $M'$. 
The \emph{2-sum} of $M$ and $M'$ \emph{on} \emph{basepoint} $e$ is the matroid, denoted $M \oplus_2 M'$, on ground set $(E \cup E') - e$ whose circuits are the circuits of $M \del e$ and $M' \del e$ along with all sets of the form $(C-e) \cup (C'-e)$, where $C$ is a circuit of $M$ containing $e$ and $C'$ is a circuit of $M'$ containing $e$. 
A connected matroid $M$ has a 2-separation if and only if it can written as a 2-sum $M = M_1 \oplus_2 M_2$, where each of $M_1$ and $M_2$ has at least three elements and is isomorphic to a proper minor of $M$~\cite[Theorem 8.3.1]{oxley}. 
Thus $M$ has a vertical 2-separation if and only if $M$ can be written as a 2-sum in which each of the summands is a proper minor of $M$ which has at least three elements and rank at least two. 

The class of bicircular matroids is not closed under 2-sums. 
However, there is a special case in which we can find a graph representing a 2-sum of two bicircular matroids. 
Let $G$ and $G'$ be graphs with $E(G) \cap E(G') = \{e\}$, where $e$ is neither a loop nor coloop in $B(G)$ or $B(G')$. 
If $e$ is a loop in $G$ incident with $v \in V(G)$ and $e$ is a loop in $G'$ incident with $v' \in V(G')$, then define the \emph{loop-sum} of $G$ and $G'$ \emph{on} $e$ to be the graph obtained from the disjoint union of $G \del e$ and $G' \del e$ by identifying vertices $v$ and $v'$. 

\begin{prop} \label{clarinet}
Let $G$ and $G'$ be graphs with $E(G) \cap E(G') = \{e\}$. 
If $H$ is the loop-sum of $G$ and $G'$ on $e$, 
then $B(H)$ is the 2-sum of $B(G)$ and $B(G')$ on $e$.
\end{prop}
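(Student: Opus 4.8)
The plan is to verify directly that $B(H)$ and $B(G) \oplus_2 B(G')$ have the same circuits. Recall that the circuits of $B(H)$ are exactly the bicycles of $H$, while by definition the circuits of $B(G) \oplus_2 B(G')$ are the circuits of $B(G) \del e$, the circuits of $B(G') \del e$, and all sets $(C - e) \cup (C' - e)$ with $C$ a circuit of $B(G)$ through $e$ and $C'$ a circuit of $B(G')$ through $e$. Since $e$ is a loop of $G$ at $v$ we have $B(G) \del e = B(G \del e)$, whose circuits are precisely the bicycles of $G \del e$, and likewise on the $G'$ side. So the whole statement reduces to a structural analysis of bicycles.

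First I would record the elementary properties of a bicycle $S$ that drive everything: $S$ is connected, has exactly one more edge than vertices, and (by minimality) has no vertex of degree $1$, since deleting a pendant edge together with its leaf removes one edge and one vertex and hence preserves the excess of edges over vertices, contradicting minimality. Consequently every bicycle is a handcuff, a tight handcuff (figure-eight), or a theta subgraph. I would then isolate the key observation about the basepoint: because $e$ is a loop at $v$ and thetas contain no loops, every circuit $C$ of $B(G)$ containing $e$ must be a handcuff one of whose two cycles is $e$ itself. Hence $C - e$ is a cycle together with a (possibly trivial) path joining it to $v$; call such a configuration a \emph{tadpole} rooted at $v$. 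The analogous statement holds for circuits $C'$ of $B(G')$ through $e$, rooted at $v'$.

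The heart of the argument is the classification of bicycles $S$ of $H$ according to which sides of the loop-sum they meet. Write $w$ for the vertex of $H$ obtained by identifying $v$ and $v'$, and set $S_G = S \cap (E(G) \del e)$ and $S_{G'} = S \cap (E(G') \del e)$. If one of these is empty, then $S$ lies wholly within $G \del e$ or $G' \del e$ and is a bicycle there, matching a circuit of $B(G) \del e$ or $B(G') \del e$. The main case is when both are non-empty. Since the two sides meet only at $w$ and $S$ is connected, both $S_G$ and $S_{G'}$ are connected and contain $w$, and the counts split as $|E(S)| = |E(S_G)| + |E(S_{G'})|$ and $|V(S)| = |V(S_G)| + |V(S_{G'})| - 1$. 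Minimality forbids either side from containing a bicycle, so each side has at most as many edges as vertices; combined with $|E(S)| = |V(S)| + 1$ this forces each side to be connected with exactly as many edges as vertices, hence unicyclic. Finally, the no-pendant-vertex property, applied to every vertex other than $w$, forces each side to be a single cycle with one path attached running to $w$, i.e.\ a tadpole rooted at $w$. Re-attaching the loop $e$ at $v = w$ then exhibits $S_G \cup \{e\}$ as a circuit $C$ of $B(G)$ through $e$ with $C - e = S_G$, and $S_{G'} \cup \{e\}$ as a circuit $C'$ of $B(G')$ with $C' - e = S_{G'}$, so $S = (C - e) \cup (C' - e)$ is a circuit of the $2$-sum.

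For the converse I would simply reverse this last step: given circuits $C, C'$ through $e$ on the two sides, each of $C - e$ and $C' - e$ is a tadpole rooted at $v$ and $v'$ respectively, and identifying $v$ with $v'$ glues them at $w$ into a handcuff (or a figure-eight, when both paths are trivial), which is a bicycle of $H$. I expect the decomposition in the two-sided case to be the only real obstacle: everything hinges on combining the edge--vertex count with the minimality of $S$ to rule out extra tree growth and to pin each side down to a single cycle-plus-path. The single-sided cases and the converse are then routine.
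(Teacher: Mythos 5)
Your proof is correct and takes the same approach as the paper: the paper's proof simply asserts that ``it is straightforward to check that the circuits of $B(H)$ and the circuits of the 2-sum $B(G) \oplus_2 B(G')$ coincide,'' and your proposal is precisely that check, carried out in full via the tadpole decomposition of bicycles meeting both sides of the loop-sum. No gaps; the details you flag as routine genuinely are.
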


\begin{proof}
It is straightforward to check that the circuits of $B(H)$ and the circuits of the two sum $B(G) \oplus_2 B(G')$ coincide.  
\end{proof} 

Equipped with this basic tool, we can now show that an excluded minor for the class of bicircular matroids is vertically 3-connected. 
We also use the notion of the parallel connection of two matroids $M, M'$ on an element $e$, denoted $P(M,M')$. 
The 2-sum of $M$ and $M'$ on basepoint $e$ is given by deleting $e$ from the parallel connection of $M$ and $M'$ on basepoint $e$: $M \oplus_2 M' = P(M,M') \del e$. 
(See~\cite[Section 7.1]{oxley} for details). 

\begin{lem} \label{Nvert3} 
\addcontentsline{toc}{subsubsection}{$N$ is vertically 3-conn}
Let $N$ be an excluded minor for the class of bicircular matroids. 
Then $N$ is vertically 3-connected, and every parallel class has size at most $2$.
\end{lem}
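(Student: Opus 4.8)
The plan is to establish that $N$ is loopless and connected, rule out vertical $1$- and $2$-separations (which together give vertical $3$-connectivity), and bound the parallel classes, all by exhibiting a bicircular representation of $N$ whenever one of these fails. Throughout I will use the elementary observation that the only circuits of size two in a bicircular matroid $B(G)$ are pairs of loops of $G$ incident to a common vertex (a $2$-edge bicycle must have a single vertex). Consequently a nontrivial parallel class of $B(G)$ is exactly the set of loops at some fixed vertex, and no nonloop edge of $G$ is parallel to any other element.

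First I would dispose of loops and disconnection. If $\ell$ were a loop of $N$, then $N\del \ell$ is a proper minor, hence bicircular, and re-introducing $\ell$ as a rank-$0$ direct summand keeps the matroid bicircular, contradicting that $N$ is an excluded minor; so $N$ is loopless. Likewise, if $N$ were disconnected it would be a direct sum of two proper minors, each bicircular, and a disjoint union of the representing graphs would represent $N$; so $N$ is connected. A connected loopless matroid has no $1$-separation at all, hence no vertical $1$-separation.

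Next I would bound the parallel classes. Suppose some parallel class of $N$ contains three elements $e,f,g$. Then $N\del e$ is bicircular, say $N\del e = B(G)$, and $f,g$ remain parallel in $N\del e$, so by the observation above $f$ and $g$ are loops of $G$ at a common vertex $v$. Form $G'$ from $G$ by adding a loop $e$ at $v$. Then $B(G')\del e = B(G) = N\del e$ and $e$ is parallel to $f$ in $B(G')$, exactly as in $N$. Since the rank function of a matroid is determined by a single-element deletion together with the parallel class of the deleted element, $B(G') = N$, a contradiction. Hence every parallel class of $N$ has size at most two.

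It remains to rule out a vertical $2$-separation, which is the heart of the lemma. Suppose $(A,B)$ is a vertical $2$-separation. As recalled above, $N$ can then be written as a $2$-sum $N = M_1 \oplus_2 M_2$ on a basepoint $e$, where each $M_i$ is a proper minor of $N$ of rank at least two with at least three elements; being proper minors, $M_1$ and $M_2$ are bicircular. The goal is to choose graphs $G_1,G_2$ with $B(G_i)=M_i$ in which the basepoint $e$ is a \emph{loop}: the loop-sum $H$ of $G_1$ and $G_2$ on $e$ would then satisfy $B(H) = M_1 \oplus_2 M_2 = N$ by Proposition \ref{clarinet}, giving the contradiction that $N$ is bicircular. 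The main obstacle is therefore to realise the basepoint as a loop in a representation of each summand. Since $e$ is neither a loop nor a coloop of $M_i$, it lies in a circuit; starting from a connected representation of $M_i$, I would produce a representation of $M_i\del e$ and a vertex $z$ at which re-adding $e$ as a loop recovers $M_i$, i.e.\ a representation in which the sets spanning $e$ are precisely those whose $z$-component is cyclic. Proving that such a representation always exists when $e$ is not a coloop is the crux of the argument, and is exactly the point at which the detailed structure of graphic representations of bicircular matroids must be brought to bear.
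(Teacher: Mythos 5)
Your handling of loops, connectedness, and parallel classes is correct and essentially the paper's own argument, and your reduction of a vertical $2$-separation to a $2$-sum $N = M_1 \oplus_2 M_2$ to be reassembled by a loop-sum via Proposition~\ref{clarinet} is exactly the paper's plan. But the step you defer---realising the basepoint as a loop in a representation of each summand---is not a technical detail to be filled in later; it is the whole content of this case, and the general fact you propose to prove there is false. You ask for: whenever $e$ is not a coloop of a connected bicircular matroid $M_i$, some representation of $M_i$ has $e$ as a loop. Take $M_i = B(K_5)$ and any element $e$: this matroid is vertically $3$-connected of rank $5$, so by Theorem~\ref{duck} its unique bicircular representation is $K_5$ itself, which has no loops at all. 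So no amount of analysis of the structure of bicircular representations will yield the claim from the hypotheses you allow yourself; the loop must instead be manufactured using the minimality of $N$, which your outline never invokes in this case.

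Two ingredients are missing, and they are intertwined. First, the paper disposes of series pairs \emph{before} considering general vertical $2$-separations, by a different construction: if $\{e,f\}$ is a series pair, represent $N/f$ and subdivide the edge $e$. This case must be split off, because a side of a vertical $2$-separation that is a series class corresponds to a summand $M_2$ that is a circuit, and then no loop-sum argument can succeed (a series extension of $B(K_5)$ is bicircular via subdivision, yet its larger summand admits no representation with the basepoint a loop). Second, once series pairs are excluded, neither summand of the $2$-sum is a circuit, and the paper's key trick becomes available: the parallel extension $M_1^+$ of $M_1$ by a new element $e'$ parallel to the basepoint $e$ is isomorphic to a \emph{proper minor} of $N$---this is where ``$M_2$ is not a circuit'' is used, via a circuit $C$ of $M_2$ through $e$ and the parallel connection $P(M_1,M_2)$---hence $M_1^+$ is bicircular by the excluded-minor hypothesis. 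In any graph representing $M_1^+$ the parallel pair $\{e,e'\}$ must be a pair of loops at a common vertex, so deleting $e'$ produces precisely the representation of $M_1$ with $e$ a loop that you need; the same applies to $M_2$, and the loop-sum then finishes as you intended. So your skeleton is right, but the lemma is actually proved in the two places you left blank: the subdivision argument for series pairs, and the parallel-extension trick that converts minor-minimality of $N$ into a loop representation of the basepoint.
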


\begin{proof}
Suppose $N$ contains three elements $e, f, g$ in a common parallel class. 
Let $M = N \del g$, and let $G$ be a graph representing $M$. 
The circuit $\{e,f\}$ forms a bicycle in $G$, so $e$ and $f$ are a pair of loops incident to a common vertex $v$. 
Let $H$ be the graph obtained from $G$ by adding $g$ as a loop incident to $v$: $H$ is a bicircular representation of $N$, a contradiction. 

Now suppose $N$ contains a series pair $e, f$. 
Let $M = N/f$, and let $G$ be a graph representing $M$. 
Let $H$ be the graph obtained from $G$ by subdividing $e$, and label the two edges of the resulting path $e, f$. 
Then $H$ is a bicircular representation of $N$, a contradiction. 

Finally, suppose $N$ has a vertical 2-separation, where neither side is a series class. 
Then $N$ is a 2-sum $N = M_1 \oplus_2 M_2$, where each of $M_1, M_2$ has at least three elements and rank at least two, neither is a circuit, and each is isomorphic to a proper minor of $N$. 
Each of $M_1$ and $M_2$ is therefore bicircular. 
We construct $M_1$ and $M_2$ as minors of $N$ as follows. 
As $M_2$ has rank at least two and is not a circuit, and $e$ is neither a loop nor coloop in $M_2$, we may choose a circuit $C$ of $M_2$ containing $e$ as well as a second element $e'$. 
Let $M_1^+$ be the matroid obtained from the parallel connection $P(M_1,M_2)$ of $M_1$ and $M_2$ on $e$ by deleting all elements of $E(M_2)$ but $C$ and contracting all elements of $C$ but $e$ and $e'$. 
Elements $e$ and $e'$ are a parallel pair in $M_1^+$, and $M_1 = M_1^+ \del e'$.  
Let $G_1^+$ be a graph representing $M_1^+$. 
As $\{e,e'\}$ is a parallel pair of $M_1^+$, $e$ and $e'$ are a pair of loops incident to a common vertex of $G_1^+$. 
Let $G_1 = G_1^+ \del e'$. 
Then $G_1$ is a representation for $M_1$, and $e$ is a loop in $G_1$. 
Similarly, we may construct a graph $G_2$ representing $M_2$ in which $e$ is a loop. 
Thus by Proposition \ref{clarinet} the graph $H$ obtained as the loop-sum of $G_1$ and $G_2$ on $e$ is a bicircular representation for $N$, a contradiction. 
\end{proof} 

We say a matroid $M$ is \emph{essentially 3-connected} if it is connected and 
its cosimplification is vertically 3-connected.  
Equivalently, a connected matroid is essentially 3-connected if 
every 2-separation $(A,B)$ has the property that one of $A$ or $B$ is a collection of 
pairwise disjoint sets, each of which is contained in a 
series class, which after cosimplification is contained in a parallel class (possibly trivial). 
If $(A,B)$ is 2-separation of $M$ and the elements of $B$ remaining in a cosimplification $\co(M)$ of $M$ are contained in a parallel class of $\co(M)$, then we say $B$ \emph{cosimplifies to a parallel class}. 
Accordingly, we say that a 2-separation $(A,B)$ is \emph{essential} if neither $A$ nor $B$ 
cosimplifies to a parallel class. 
Thus a connected matroid is {essentially 3-connected} precisely when it has no essential 2-separation. 
Note that the definitions say that a matroid is vertically 3-connected if and only if it is essentially 3-connected and has no non-trivial series class. 

Let $M$ be a bicircular matroid represented by the graph $G=(V,E)$. 
For a subset $X \subseteq E$, denote by 
$a(X)$ the number of acyclic components of the subgraph $G[X]$ induced by the edges in $X$. 
Then the rank of a set $X \subseteq E$ is $r(X) = |V(X)| - a(X)$. 
Given a separation $(A,B)$ of the matroid, we call the {connected} 
components of $G[A]$ and $G[B]$ the \emph{parts} of the separation in $G$. 
A \emph{balloon} in $G$ is a maximal set of edges $X$ such that $G[X]$ is a subdivision of either a loop or the graph consisting of a single edge with a loop incident to one of its ends, and $V(X) \cap V(E(G)-X) = \{v\}$ for some vertex $v$, where if $G[X]$ has a vertex of degree 1, then this vertex is $v$; $v$ is the \emph{vertex of attachment} of the balloon. 
{The following lemma is a straightforward consequence of the proof of Lemma 2.5 in~\cite{MR3588716}; we provide a proof specialised to the bicircular case.}

\begin{lem} \label{bobby} 
Let $M$ be a connected bicircular matroid represented by the graph $G$. 
A partition $(A,B)$ of $E(G)$ is an essential 2-separation of $M$ if and only if 
there are just two parts of $(A,B)$ that contain 
{cycles}, 
one $A'$ contained in $A$ and the other $B'$ contained in $B$, $G$ consists of $A'$ and $B'$ together with a path between them, and neither 
$A'$ nor $B'$ consists of just a collection of balloons all sharing a common vertex of attachment.
\end{lem}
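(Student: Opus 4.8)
The plan is to reduce the whole statement to the rank formula $r(X)=|V(X)|-a(X)$ recorded just above. We may take $G$ connected with no isolated vertices, and since $M$ is connected it has no coloops, so $G$ contains a cycle and $r(E)=|V(G)|$, i.e.\ $a(E)=0$. For a connected matroid every $2$-separation $(A,B)$ satisfies $r(A)+r(B)-r(E)=1$: it is at least $1$ by connectivity and less than $2$ by the definition of a $2$-separation. Writing $t=|V(A)\cap V(B)|$ and using $|V(A)|+|V(B)|=|V(G)|+t$, the rank formula turns this into $t-a(A)-a(B)=1$, that is, the single identity
\[
t=1+a(A)+a(B),
\]
which I will call $(\ast)$. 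Both directions of the proof are driven by $(\ast)$.

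Next I would translate the two forbidden patterns into graph language. A parallel pair of $B(G)$ is a pair of loops at a common vertex, so a side $X$ is a parallel class of size at least two exactly when $G[X]$ is a set of loops incident to a single vertex, i.e.\ $|V(X)|=1$; this is the loops clause in the statement. The crux is the series case, where I claim that for a side $X$ the following are equivalent: $X$ is a series class, $X$ is independent, and $G[X]$ contains no bicycle. Since $M$ has no coloops, $M^*$ has no loops, so $X$ lies in a single series class precisely when $r_{M^*}(X)=1$, i.e.\ when $r(E-X)=r(E)-|X|+1$. If $G[X]$ is bicycle-free then each of its $c_X$ components has at most one cycle, giving the edge count $|X|=|V(X)|-c_X+k_X$, where $k_X$ is the number of cyclic components; substituting this and $(\ast)$ into the rank condition $r(E-X)=r(E)-|X|+1$ makes it an identity, so bicycle-free implies $r_{M^*}(X)=1$. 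Conversely $r_{M^*}(X)=1$ together with $(\ast)$ forces $r(X)=|X|$, so $X$ is independent and hence bicycle-free. Thus, for a side of a $2$-separation, all in series $\Leftrightarrow$ independent $\Leftrightarrow$ bicycle-free.

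For both directions I encode the parts of $(A,B)$ and their shared vertices as a connected bipartite multigraph $\Bb$: one node per part (a component of $G[A]$ or of $G[B]$) and one edge for each vertex of $V(A)\cap V(B)$, joining the two parts containing it. A short computation with the rank formula rewrites the connectivity as $r(A)+r(B)-r(E)=\beta(\Bb)+k_A+k_B-1$, where $\beta(\Bb)\ge0$ is the cycle rank of $\Bb$ and $k_A,k_B$ count the cyclic parts on each side. For the forward direction, an essential $(A,B)$ has, by the equivalences above, neither side bicycle-free, so $k_A,k_B\ge1$; combined with $\beta(\Bb)+k_A+k_B=2$ (which is $(\ast)$), this forces $k_A=k_B=1$ and $\beta(\Bb)=0$. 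Because $M$ has no coloops, every acyclic part meets at least two shared vertices and so is not a leaf of $\Bb$; hence the only leaves of the tree $\Bb$ are its two cyclic parts, one in $A$ and one in $B$, so $\Bb$ is a path and $G$ is exactly these two cyclic parts joined by a path of acyclic parts. Each cyclic part must moreover contain a bicycle, else its side would be bicycle-free, so there are exactly two parts containing bicycles, one per side, and essentiality also forbids either side being loops at a vertex. For the backward direction the hypothesised $G$ realises precisely $k_A=k_B=1$ and $\beta(\Bb)=0$, so $r(A)+r(B)-r(E)=1$ and $(A,B)$ is a $2$-separation (each side contains at least the two edges of a bicycle); each side contains a bicycle, hence is dependent and not all in series, and neither side is loops at a vertex, hence not all in parallel, so $(A,B)$ is essential. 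I expect the main difficulty to be the two bookkeeping identities---getting $r(E-X)=r(E)-|X|+1$ to fall out of $(\ast)$ and the bicycle-free edge count, and establishing $r(A)+r(B)-r(E)=\beta(\Bb)+k_A+k_B-1$---together with the no-coloop argument that forces $\Bb$ to be a path; once these are in place the case analysis collapses to the single configuration described.
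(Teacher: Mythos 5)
Your proof is correct and takes essentially the same route as the paper: both directions rest on the single rank identity $r(A)+r(B)-r(M)=|V(A)\cap V(B)|-a(A)-a(B)=1$, with connectivity (no coloops) and essentiality then used to force the two-bicycle-parts-plus-path structure. Your auxiliary part-incidence multigraph with its cycle rank, and the dual-rank characterisation of all-in-series sides, are just a more formal bookkeeping of steps the paper's proof carries out informally (e.g.\ ``each acyclic part consists of a path linking a pair of vertices in $V(A)\cap V(B)$'' and the claim that an all-series side would be bicycle-free).
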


\begin{proof} 
Let $(A,B)$ be an essential 2-separation of $M$. 
Since $M$ is connected, $G$ is connected. 
Write $S = V(A) \cap V(B)$. For each part $W$, let $\delta(W)=1$ if $W$ is acyclic and $\delta(W)=0$ otherwise. Let $A_1, \ldots, A_k$ be the parts of $(A,B)$ contained in $G[A]$, and let $B_1, \ldots, B_l$ be the parts of $(A,B)$ contained in $G[B]$. The rank calculation $r(A)+r(B)-r(M) = |V(A)|-a(A)+|V(B)|-a(B)-|V(G)| = 1$ implies 
$|S| - \sum_{i=1}^h \delta(A_i) - \sum_{j=1}^l \delta(B_j) = 1$. 
Since each vertex in $S$ is in exactly one part contained in $G[A]$ and exactly one part contained in $G[B]$, doubling and rearranging this equation yields the sum 
\[ \sum_{i=1}^k \left( |S \cap V(A_i)| - 2\delta(A_i) \right) + \sum_{j=1}^l \left( |S \cap V(B_j)| - 2\delta(B_j) \right) = 2 \] 
Every acyclic part must have at least two vertices in $S$, else $M$ is not connected (every edge in such a part would be a coloop of $M$). Thus every term in the sum is nonnegative. In particular, letting $t$ be the number of vertices in $S$ contained in a part, an acyclic part will contribute $t-2$ to the sum, while each part containing a cycle will contribute $t$. Call a part \emph{neutral} if it is acyclic and contains exactly two vertices in $S$; call a part \emph{cyclic} if it contains a cycle. Since the sum is two, the possibilities for the parts of $(A,B)$ are: 
\begin{enumerate}[label={(\roman*)}]
\item two cyclic parts each with one vertex in $S$ and all other parts neutral, 
\item one cyclic part with two vertices in $S$ and all other parts neutral, 
\item one acyclic part with three vertices in $S$, one cyclic part with one vertex in $S$, and all other parts neutral, 
\item two acyclic parts with three vertices in $S$ and all other parts neutral, or 
\item one acyclic part with four vertices in $S$ and all other parts neutral. 
\end{enumerate} 
Let $W$ be an acyclic part. Since $G[W]$ is connected and acyclic, $G[W]$ is a tree. If $G[W]$ had a leaf $w \notin S$, then the edge incident to $w$ would be a coloop of $M$, contrary to the fact that $M$ is connected. Thus the leaves of each acyclic part are all contained in $S$. This implies that each neutral part is a path linking a pair of vertices in $S$. 
Possibilities (i)-(v) are illustrated in Figure \ref{dadadada}. 
\begin{figure}[tbp] 
\begin{center} 
\includegraphics[scale=0.95]{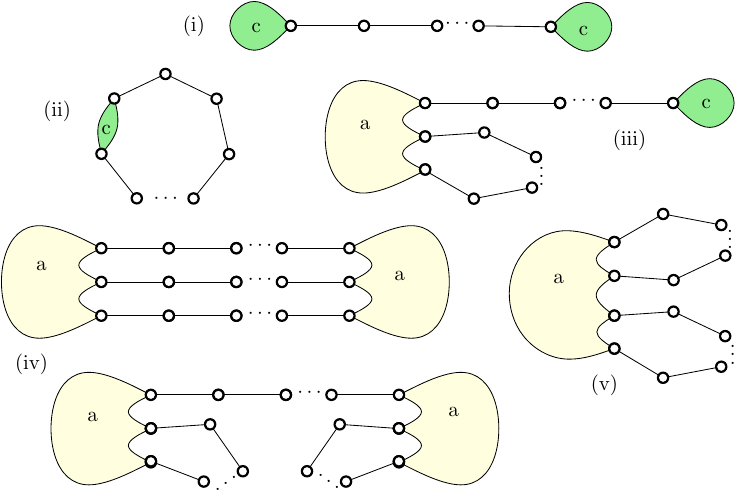}
\end{center} 
\caption{{The possibilities for parts of a 2-separation $(A,B)$. Parts labelled ``c" are cyclic, parts labelled ``a'' are acyclic, and neutral parts are depicted as line segments. Vertices in $V(A) \cap V(B)$ are shown as open circles.}}
\label{dadadada} 
\end{figure} 
Each of possibilities (ii)-(v) has one of $A$ or $B$ as a collection of elements all in series, which therefore cosimplifies to a (trivial) parallel class. Since $(A,B)$ is essential, we have outcome (i): two cyclic parts each with one vertex in $S$ and all other parts neutral. 
Note that this implies the union of the neutral parts is a path linking the two cyclic parts. 
{If both cyclic parts} were contained in one side of the separation, say $A$, then $B$ would have all of its elements contained in a common series class of $M$, 
{again} contradicting the fact that $(A,B)$ is essential. 
A collection of balloons all sharing a common vertex of attachment cosimplifies to a parallel class. 
Thus neither $A'$ nor $B'$ 
consists of just a collection of balloons all sharing a common vertex of attachment. 

Conversely, suppose $(A,B)$ is a partition of $E(M)$ that has the form described in the statement of the lemma. 
Then neither
 $A$ nor $B$ cosimplifies to a parallel class, 
and the connectivity calculation for the separation yields 
$r(A) + r(B) - r(M) = |V(A) \cap V(B)| - a(A) - a(B) = 1$
so $(A,B)$ is an essential 2-separation of $M$. 
\end{proof}

We now describe how a bicircular matroid with an essential 2-separation may become essentially 3-connected after deleting or contracting an element. 


Recall that a \emph{balloon} in a bicircular representation $G$ for a matroid is a maximal set of edges $X$ such that $G[X]$ is a subdivision of either a loop, or the graph consisting of a single edge with a loop incident to one of its ends, and $V(X) \cap V(E(G)-X) = \{v\}$, where if $G[X]$ has a vertex of degree 1, then this vertex is $v$; we call $v$ the balloon's \emph{vertex of attachment}. 
The vertices in $V(X)-\{v\}$ are the \emph{internal vertices} of the balloon. 
A \emph{line} of $G$ is a set of edges $Y$ not contained in a balloon such that $G[Y]$ is a maximal path all of whose internal vertices have degree 2 in $G$ and whose end vertices have degree at least 3. 
The end vertices of a subgraph induced by a line are the \emph{vertices of attachment} of the line; internal vertices of the path are the \emph{internal vertices} of the line. 
{While balloons and lines are sets of edges, we may speak of a balloon or line when referring to a subgraph induced by a balloon or line, when the meaning is clear from context.}
A balloon or a line is \emph{trivial} if it consists of just a single edge. 
For simplicity, we will 
{most often} 
refer to a trivial balloon as a loop, and a trivial line as an edge. 
Throughout the remainder of this paper, the term \emph{balloon} means ``non-trivial balloon" and the term \emph{line} means ``non-trivial line" 
{unless explicitly stated otherwise}. 
We say a graph $G$ has a balloon \emph{at} $x$ when $G$ has a balloon whose vertex of attachment is $x$. 

The elements in a balloon or a line in a graph $G$ are clearly all contained in a common series class of $B(G)$. 
{As long as $B(G)$ is connected and not just a circuit, the} converse also holds. 

\begin{prop} \label{elise} 
Let $S$ be a subset of elements of a bicircular matroid $M$, 
where $M$ is not a circuit and $M$ is connected aside from loops. 
Let $G$ be a graph representing $M$. 
Then $S$ is a non-trivial series class of $M$ if and only if $S$ induces a balloon or a line of $G$. 
\end{prop}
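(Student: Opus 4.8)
The plan is to prove the two implications separately, with the forward direction (series class $\Rightarrow$ balloon or line) carrying essentially all of the work and the reverse direction then following almost for free. Throughout I would work with the rank formula $r(X)=|V(X)|-a(X)$ and exploit the standing hypotheses: since $M$ is connected aside from loops and is not a circuit, we may take $G$ connected with at least two independent cycles, so that $r(E)=|V(G)|$, $G$ has no coloops, and deleting the edges of a single balloon or line can neither destroy every cycle of $G$ nor create a new acyclic component (an acyclic survivor would exhibit a coloop, which a connected matroid has none of).

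For the forward direction, suppose $S$ is a non-trivial series class. Two facts are immediate. First, every pair of elements of $S$ is a cocircuit, so by orthogonality (no circuit meets a cocircuit in exactly one element) every bicycle $C$ of $G$ satisfies $|C\cap S|\in\{0,|S|\}$. Second, $S$ is a parallel class of $M^{*}$, so $r^{*}(S)=1$, giving $r(E\setminus S)=r(E)-|S|+1$; combined with the connectivity of $M$ this forces $r(S)=|S|$, i.e.\ $S$ is independent, so each component of $G[S]$ is a tree or is unicyclic. Because $M$ has no coloops there are no pendant edges, whence every leaf of $G[S]$ meets an edge outside $S$. Writing $\partial S$ for the set of vertices of $G[S]$ incident with an edge of $E\setminus S$, and comparing the two rank identities $r(E)-r(E\setminus S)=|V(G)|-|V(E\setminus S)|+a(E\setminus S)$ and $|S|=|V(S)|-a(G[S])$, one obtains the bookkeeping relation $|\partial S|=a(G[S])+a(E\setminus S)+1$.

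The crux — and the step I expect to be the main obstacle — is to show that $G[S]$ is \emph{connected} and that \emph{$G-S$ has no acyclic component}, i.e.\ $a(E\setminus S)=0$. I would establish both by the same orthogonality device: if $G[S]$ split into two pieces, or if $G-S$ contained an acyclic component, then using that $G$ always retains a second cycle somewhere and that the relevant leaf or boundary vertices attach to the rest of the graph, one can assemble a handcuff — a cycle of one piece joined by a path to a second cycle found elsewhere — that uses \emph{some but not all} edges of $S$, contradicting $|C\cap S|\in\{0,|S|\}$. Making these constructions precise, namely always being able to locate and route the required second cycle, is where the genuine graph theory lives. Granting this, the relation $|\partial S|=a(G[S])+a(E\setminus S)+1$ collapses to $|\partial S|=a(G[S])+1\le 2$; together with the no-pendant observation and the tree/unicyclic dichotomy this forces $G[S]$ to be either a single path (a tree, $|\partial S|=2$) or a cycle or a cycle with one pendant path (unicyclic, $|\partial S|=1$). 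Maximality of $S$ finally upgrades this to the stated structure: an attachment vertex of degree $2$ would put its external edge in series with $S$, so the path joins two vertices of degree at least three (a \emph{line}) and the unicyclic case attaches to the rest of $G$ at a single vertex (a \emph{balloon}).

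For the reverse direction I would argue cheaply, reusing the forward direction. If $S$ induces a balloon or a line, a direct count with the rank formula shows that deleting $S$ removes exactly its $|S|-1$ internal vertices and leaves every surviving component with a cycle, so $r(E\setminus S)=r(E)-|S|+1$, that is $r^{*}(S)=1$; since every edge is a non-coloop, $S$ lies in a single series class $S^{+}$. By the forward direction already established, $G[S^{+}]$ is itself a balloon or a line, and it contains the balloon or line $G[S]$; as balloons and lines are maximal by definition, $S^{+}=S$. Hence $S$ is a series class, and it is non-trivial because $|S|\ge 2$.
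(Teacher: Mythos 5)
Most of your scaffolding is correct: the orthogonality fact that every circuit meets $S$ in $\emptyset$ or $S$, the independence of $S$, the identity $|\partial S|=a(G[S])+a(E(G)-S)+1$, the endgame forcing a path, cycle, or cycle-with-pendant-path and invoking maximality of $S$ at degree-2 attachment vertices, and the reverse direction via the forward one plus maximality of balloons and lines all check out (though your ``direct count'' in the reverse direction also silently needs the no-coloop/pendant-edge observation to see that no component of $G-S$ is left acyclic). The genuine problem is that the step you yourself label the crux --- that $G[S]$ is connected and $a(E(G)-S)=0$ --- is never proved; you introduce its consequences with ``Granting this''. That claim is not a side condition: it is essentially the entire graph-theoretic content of the proposition. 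Everything before and after it is routine rank arithmetic, and the paper's own one-line proof consists precisely of the remark that the statement follows from the identification of circuits with bicycles, i.e.\ from exactly the kind of routing construction you defer. As written, the forward implication is unproven.

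Moreover, the deferred construction cannot be waved through as sketched. First, a component of $G[S]$ need not contain a cycle --- independence only makes each component a tree or unicyclic --- so ``a cycle of one piece'' may not exist, and the handcuff would instead have to pick up its $S$-edges along the connecting path, a different configuration from the one you describe. Second, and more seriously, it does not suffice to assemble a bicycle that \emph{meets} $S$: to contradict $|C\cap S|\in\{0,|S|\}$ it must also \emph{miss} at least one edge of $S$, and nothing in the sketch prevents the ``second cycle found elsewhere'', or the path reaching it, from being forced through all the remaining edges of $S$ (if the only cycles of $G$ disjoint from your first piece are reachable only through the other pieces of $S$, the handcuff you build contains all of $S$ and yields no contradiction). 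Closing this requires showing that a prescribed edge $f\in S$ can always be avoided --- that is, that the relevant part of $G \del f$ still contains a bicycle through a chosen edge of $S$ --- which is a Menger/ear-type connectivity argument whose difficulty is comparable to proving the proposition outright. Until that argument is supplied, there is a genuine gap.
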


\begin{proof} 
This follows easily from the fact that a set of edges of $G$ is a circuit of $M$ if and only if it induces a bicycle in $G$. 
\end{proof}

An edge $e$ in a connected graph is a \emph{pendant} if after deleting 
balloons 
it has an end vertex of degree 1 
(so a pendant edge cannot be contained in a balloon). 
Its vertex of degree 1 (after deleting balloons) is its \emph{pendant vertex}. 
A set of lines (any of which may be trivial) is a \emph{pendant set} if they share the same vertices of attachment and after deleting the edges of all lines but one, and replacing this line with a single edge, this remaining edge is a pendant. 

\begin{prop} \label{cam}
\addcontentsline{toc}{subsubsection}{deleting or contracting removes an essential 2-separation}
Let $M$ be a connected matroid with bicircular representation $G$, and suppose $M$ has an essential 2-separation $(A,B)$. 
\begin{enumerate}[label={\upshape (\roman*)}]
\item If there is an element $e \in A$ such that $M/e$ is essentially 3-connected, then 
$G[A]$ consists of a pendant set of 
lines (possibly of size 1) and a (possibly empty) set of 
balloons incident to its pendant vertex, 
and $e$ is the single edge in a trivial line of the pendant set. 
\item If there is an element $e \in A$ such that $M \del e$ is essentially 3-connected, then $G[A] \del e$ is a collection of balloons sharing a common vertex of attachment. 
\end{enumerate}
\end{prop}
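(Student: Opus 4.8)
The plan is to treat both parts through a single mechanism: fix the structure of the given essential $2$-separation and then track how it degenerates when $e$ is removed. By Lemma~\ref{bobby} I may assume $G$ is $A'\cup B'$ together with a path between them, where $A'\subseteq A$ and $B'\subseteq B$ are the only two parts carrying bicycles and neither $G[A]$ nor $G[B]$ is a set of loops at a single vertex. For $M'$ equal to $M\del e$ or $M/e$, I would first compute the connectivity of the induced partition $(A-e,B)$ from $r(X)=|V(X)|-a(X)$, using that $e$ is neither a loop nor a coloop of $M$ (so $r(M\del e)=r(M)$ and $r_{M/e}(X)=r_M(X\cup e)-1$). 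This shows $(A-e,B)$ is either a $2$-separation of $M'$ or degenerates to a disconnection (the latter occurring exactly in a closure-degeneracy, namely $e\in\cl_M(B)$ for contraction and $e\notin\cl_M(A-e)$ for deletion); since a disconnected matroid is never essentially $3$-connected, essential $3$-connectivity rules out the disconnection and forces $(A-e,B)$ to be a non-essential $2$-separation. Because $B\supseteq B'$ still carries a bicycle and $G[B]$ is not loops at a single vertex, the side $B$ can be neither all-parallel nor all-series in $M'$, so all of the degeneracy is concentrated on the $A$-side.

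For part (ii) I would then conclude that $A-e$ is all-series or all-parallel in the bicircular matroid $M\del e=B(G\del e)$. In the all-series case Proposition~\ref{elise} identifies $A-e$ as a balloon or a line, and the line is excluded by a first-Betti-number count: $G[A]$ contains the bicycle $A'$ and so has cycle rank at least two, and deleting the single edge $e$ leaves $G[A]-e$ with cycle rank at least one, whereas a line has cycle rank zero. After separating off the degenerate all-parallel possibility, in which $A-e$ is merely a set of loops at one vertex, this leaves exactly the balloon.

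For part (i) the essential new phenomenon is that contraction can collapse edges to loops, so the degenerate side is now all-parallel rather than all-series. When $e$ is a link, $M/e=B(G/e)$ is again bicircular, and contraction of a link preserves cycle rank, so $G[A]-e$ retains cycle rank at least two and $A-e$ still carries a bicycle; since balloons and lines have cycle rank at most one, $A-e$ cannot be all-series and must therefore be all-parallel, i.e.\ a set of loops at the single vertex $w$ of $G/e$ into which the two ends $u,v$ of $e$ are identified. Every edge of $A-e$ then has both ends in $\{u,v\}$, so $G[A]$ is built from parallel edges between $u$ and $v$ together with loops at $u$ and at $v$; classifying these according to which end is a pendant vertex, and isolating the small symmetric configuration in which each of $G[A]$ and $G[B]$ is disconnected, produces the two alternatives of the statement. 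When $e$ is a loop at a vertex $v$, the matroid $M/e$ is no longer bicircular, so Lemma~\ref{bobby} and Proposition~\ref{elise} are unavailable and I would instead argue directly from the rank function: a second loop at $v$ becomes a matroid loop of $M/e$ (disconnecting it), while each link at $v$ behaves like a loop at its other end after contraction, and the requirement that $A-e$ be all-parallel then again recovers the pendant-set-with-loops description.

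I expect the contraction in part (i) to be the main obstacle. Two points require real care. First, because $M/e$ need not be bicircular when $e$ is a loop, the convenient structural descriptions of Lemma~\ref{bobby} and Proposition~\ref{elise} must be replaced by computations with $r(X)=|V(X)|-a(X)$ and the contraction formula $r_{M/e}(X)=r_M(X\cup e)-1$. Second, extracting the precise graph pictures in (i) and (ii)—in particular distinguishing the genuine configurations from the degenerate ones in which a side is a bunch of loops at a single vertex, and pinning down the exceptional path-of-length-two case—amounts to a somewhat delicate bookkeeping of where the ends of $e$, the loops, and the attachment vertices of $A'$ and $B'$ sit relative to one another.
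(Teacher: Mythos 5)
Your skeleton is the same as the paper's: the printed proof of Proposition~\ref{cam} is the single assertion that the possibilities can be read off from the structure supplied by Lemma~\ref{bobby}, and your proposal is that reading, with the connectivity bookkeeping, Proposition~\ref{elise}, and cycle-rank counts filling in the details. Much of it is sound, in particular the reduction to a non-essential $2$-separation with the degeneracy concentrated on the $A$-side, and the cycle-rank argument excluding a line in part (ii). The genuine gap is in part (i), at the step ``must therefore be all-parallel, i.e.\ a set of loops at the single vertex $w$ of $G/e$ into which the two ends $u,v$ of $e$ are identified.'' Being all-parallel in $B(G/e)$ does mean being a set of loops at one vertex, but nothing forces that vertex to be $w$: the loops of $A-e$ may sit at a vertex $x$ not incident to $e$, with $e$ lying on the connecting path in the structure of Lemma~\ref{bobby}. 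For instance, let $G$ be the path on $x,y,z$ with edges $f=xy$ and $e=yz$, with two loops at $x$ and two loops at $z$, and take $A=\{e\}\cup\{\text{loops at }x\}$ and $B=\{f\}\cup\{\text{loops at }z\}$. Then $(A,B)$ is an essential $2$-separation of $M=B(G)$, the matroid $M/e$ is essentially $3$-connected, and $A-e$ is a set of loops at $x$, not at the contraction vertex. This configuration is exactly the first, exceptional alternative of the statement; under your claim every edge of $A-e$ has both ends in $\{u,v\}$, so $G[A]$ would always be connected and that alternative could never arise. Your later remark about ``isolating the small symmetric configuration in which each of $G[A]$ and $G[B]$ is disconnected'' is therefore inconsistent with the premise it is supposed to follow from: as written, the case analysis cannot produce both alternatives.

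To repair part (i) you must split the all-parallel case according to whether the loop vertex is the contraction vertex or not, and in the second case use essential $3$-connectivity of $M/e$ (via separations splitting the connecting path) to force $G$ into the path-of-length-two configuration. Two further steps are gestured at rather than proved, and they sit exactly where the delicacy lies. First, in part (ii) ``separating off the degenerate all-parallel possibility'' is not an argument: that case must either be shown impossible or shown to yield a balloon, and your cycle-rank count does not touch it, since a set of at least two loops at one vertex also has cycle rank at least two. Second, the claim that ``a disconnected matroid is never essentially $3$-connected'' is not immediate from the paper's definitions: a component that is a circuit cosimplifies to a single matroid loop, and loops do not create vertical separations, so disposing of the disconnected outcome requires an argument about what the components left after removing $e$ can look like, not a one-line dismissal.
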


\begin{proof} 
The statement describes the possibilities given the structure of $G$ described in Lemma \ref{bobby} under the assumptions that $M/e$ or $M \del e$ is essentially 3-connected. 
\end{proof}

We will have occasion to use the following simple fact more than once. 

\begin{prop} \label{xavier}
\addcontentsline{toc}{subsubsection}{2-connected graph of min deg 3 have deletable edges}
Let $G$ be a 2-connected graph 
of minimum degree 3. 
Let $(A,B)$ be a proper 2-separation of $G$ with $B$ minimal. 
Then there is an edge $e \in B$ such that $G \del e$ remains 2-connected. 
\end{prop}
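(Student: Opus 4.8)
The plan is to isolate the $B$-side of the separation by adjoining a virtual edge across the separator and then to exploit the minimality of $B$ to make that side highly connected. Write $\{x,y\}=V(A)\cap V(B)$, and let $H^+=G[B]+f$ be the graph obtained from $G[B]$ by adding a new edge $f$ joining $x$ and $y$. The first step is the standard gluing observation that $G$ is the union of $G[A]+f$ and $G[B]+f$ identified along $\{x,y\}$, and that such a union is 2-connected exactly when both augmented pieces are. Applying this to $G$ itself shows that $G[A]+f$ and $H^+=G[B]+f$ are both 2-connected; applying it to $G\del e$ for an edge $e\in B$ shows that $G\del e$ is 2-connected if and only if $H^+\del e$ is, since the piece $G[A]+f$ is unchanged. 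Thus it suffices to produce a non-virtual edge $e\in B$ with $H^+\del e$ 2-connected. Note $B\neq\emptyset$, since a vertex of $V(B)-V(A)$ (which exists as $(A,B)$ is proper) meets only edges of $B$.

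Next I would show that minimality of $B$ forces $H^+$ to have no proper 2-separation. Suppose $(P,Q)$ were such a separation, with $f\in P$, so that $Q\subseteq B$. Because $A$ meets $B$ only in $\{x,y\}\subseteq V(P)$, the pair $(A\cup(P-f),\,Q)$ is a separation of $G$ of order at most two; it is proper (a vertex of $V(Q)-V(P)$ lies in neither $V(A)$ nor $V(P-f)$, while $V(A)-V(B)\neq\emptyset$), and $Q\subsetneq B$. This contradicts the minimality of $B$. Hence $H^+$ is 2-connected with no proper 2-separation, so it is either 3-connected or has exactly three vertices $\{x,y,z\}$ (it has at least three, as $V(B)-\{x,y\}\neq\emptyset$).

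Finally I would delete an edge according to these two cases. If $H^+$ is 3-connected, then deleting any single edge $e$ preserves 2-connectedness: a would-be cut vertex $w$ of $H^+\del e$ cannot arise, because $H^+-w$ is 2-connected and hence remains connected after the removal of one edge (and this also covers $w$ an endpoint of $e$, where $(H^+\del e)-w=H^+-w$). Any real edge $e\in B$ then works. In the three-vertex case the internal vertex $z$ has all of its (at least three) incident edges in $B$ by the minimum-degree hypothesis, and is joined to both $x$ and $y$ by the 2-connectedness of $G$; deleting one edge of a parallel pair at $z$ leaves $z$ adjacent to both $x$ and $y$, so $H^+\del e$ remains 2-connected.

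The main obstacle is the middle step: extracting the absence of a proper 2-separation of $H^+$ from the minimality of $B$, where care is needed to verify that the induced separation of $G$ is genuinely proper and strictly smaller on its $B$-side. The minimum-degree-three hypothesis enters only in the three-vertex case, but it is essential there: for a cycle, which has minimum degree two, the minimal $B$ is a path of length two and deleting either of its edges destroys 2-connectedness.
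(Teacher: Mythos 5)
Your overall route is sound and is in fact written out more fully than the paper's own proof: the paper picks an arbitrary edge of $B$, and when its deletion fails it appeals to an unspecified ``uncrossing of 2-separations'' and to Figure~\ref{manny1} to conclude that $|V(B)-V(A)|=1$ and that $B$ has one of a few explicit forms containing a deletable edge. You package the same use of minimality differently: glue along $\{x,y\}$, show that minimality of $B$ forbids any proper 2-separation of $H^+=G[B]+f$, and then split into the 3-connected case and the three-vertex case. The reduction ``$G\del e$ is 2-connected iff $H^+\del e$ is,'' the minimality argument (including the observation that $(\{f\},B)$ is not a \emph{proper} separation of $H^+$, so $Q\subsetneq B$ genuinely holds), and the 3-connected case are all correct. (One cosmetic slip: the union of $G[A]+f$ and $G[B]+f$ identified along $\{x,y\}$ is $G+f$, not $G$; the fact you actually use --- $G$ is 2-connected iff both augmented sides are --- is the correct one.)

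There is, however, one genuine gap, and it matters here because in this paper graphs may have loops and Proposition~\ref{xavier} is applied to such graphs: in the proof of Lemma~\ref{eggbert} the graph $G'$ contains the loop $s_1$, and nothing prevents loops from lying in the side $B$. In your three-vertex case you tacitly assume that all (at least three) edges at the internal vertex $z$ join $z$ to $\{x,y\}$, extract a parallel pair by pigeonhole, and delete one edge of it. But $z$ may instead carry a loop: the configuration $B=\{zx,\,zy,\,\ell\}$ with $\ell$ a loop at $z$ satisfies every hypothesis of the proposition ($G$ 2-connected, minimum degree three, $(A,B)$ a proper 2-separation with $B$ minimal, since any proper 2-separation with smaller second side would need $z$ as a private vertex and hence must contain all three edges at $z$), yet there is no parallel pair at $z$, so the edge you prescribe does not exist. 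The conclusion is still true --- delete $\ell$ instead: a loop never contributes to 2-connectivity, and $H^+\del\ell$ contains the triangle on $x,y,z$, so $G\del\ell$ is 2-connected --- but your case analysis as written fails on this legitimate input. The same convention issue infects the phrase ``all of its (at least three) incident edges'': if loops count twice towards degree, minimum degree three does not even guarantee three incident edges at $z$. A one-sentence patch (``if $z$ has a loop, delete it; otherwise pigeonhole gives a parallel pair'') closes the gap.
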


\begin{proof}
Let $e \in B$. 
If $G \del e$ is 2-connected, we are done. 
If not, then 
{$G \del e$ has a cut vertex $x$ and a proper 1-separation $(C,D)$ with $V(C) \cap V(D) = \{x\}$ and $D \subseteq B \del e$. This implies that either $(C \cup \{e\}, D)$ is a proper 2-separation of $G$, where $V(C \cup \{e\}) \cap V(D) = \{x,y\}$ where $y$ is one of the endpoints of $e$, or that $V(D) - V(C \cup \{e\})$ is empty. By the minimality of $B$ the first outcome does not occur. 
Thus $C = A \cup \{e\}$, $D = B - \{e\}$,}
$|V(B) - V(A)| = 1$, 
{and}
$B$ has the form of one of the graphs shown in Figure \ref{manny1}. 
\begin{figure}[tbp] 
\begin{center} 
\includegraphics[scale=0.95]{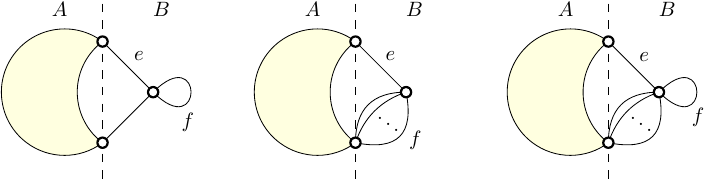}
\end{center} 
\caption{Proper 2-separations $(A,B)$ with $B$ minimal, {where $G \del e$ has a proper 1-separation $(C,D)$ with $D \subset B$, but $(C \cup \{e\}, D)$ is not a proper 2-separation of $G$ because $V(C \cup \{e\}) - V(D)$ is empty}.}
\label{manny1} 
\end{figure} 
In any case, $B$ contains an edge $f$ such that $G \del f$ is 2-connected. 
\end{proof}

\goodbreak
\section{Representations} \label{representations}

\subsection{Frame matroids and biased graphs} \label{eintanembau} 

The class of \emph{frame matroids} is an important subclass of quasi-graphic matroids. 
Cycle matroids of graphs and bicircular matroids are both subclasses of the class of frame matroids. 
Both have natural representations as frame matroids, which we now describe. 

A \emph{framed matroid} is a matroid $M$ having a distinguished basis $F$ with the property that every element of $M$ is either parallel to an element of $F$ or spanned by a pair of elements in $F$. 
Such a basis is a \emph{frame} for $M$. 
A \emph{frame matroid} is a restriction of a framed matroid. 
Given a graph $G = (V,E)$, the set $V$ of vertices of $G$ may naturally be considered to be the frame for a framed matroid on ground set $V \cup E$ in which every edge is spanned by its ends. 
The cycle matroid $M(G)$ of $G$ is the restriction to $E$ of this framed matroid when every cycle of $G$ is a circuit. 
The bicircular matroid $B(G)$ of $G$ is the restriction to $E$ of this framed matroid when every cycle of $G$ is independent. 

This follows from work of Zaslavsky, who showed~\cite{MR1273951} that the class of frame matroids consists precisely of those matroids having a representation as a \emph{biased graph}; that is, a graph $G$ together with a distinguished collection of its cycles $\Bb$ with the property that whenever the intersection of a pair of cycles 
in $\Bb$ is a non-trivial path, then the cycle obtained as their symmetric difference 
is also in $\Bb$. 
This property of $\Bb$ is referred to as the \emph{theta property}; a graph formed by the union of two cycles whose intersection is a non-trivial path is called a \emph{theta}. 
Cycles in $\Bb$ are said to be \emph{balanced}, and are otherwise \emph{unbalanced}. 
A biased graph $(G,\Bb)$ \emph{represents} a frame matroid $M$ if $E(G) = E(M)$ and every circuit of $M$ is either a balanced cycle or a bicycle of $G$ that contains no balanced cycle. 
In this case we write $M = F(G,\Bb)$. 

Let $G$ be a graph. 
Denote by $\Ii(M)$ the collection of independent sets of the matroid $M$. 
The fact that $\Ii(M(G)) \subseteq \Ii(F(G,\Bb)) \subseteq \Ii(B(G))$ holds for any collection $\Bb$ of cycles of $G$ satisfying the theta property is easily seen in terms of biased graph representations. 
It follows directly from the definitions that the cycle matroid of $G$ is the frame matroid $F(G,\Cc(G))$, where $\Cc(G)$ is the set of all cycles of $G$. 
That is, a graphic matroid is a frame matroid having a representation in which all cycles are balanced. 
The bicircular matroid $B(G)$ of $G$ is
{the}
frame matroid $F(G,\emptyset)$. 
That is, $B(G)$ is a frame matroid having a representation in which no cycle is balanced. 

Equipped with these notions, we may now define the class of bicircular matroids in terms of biased graph representations, in such a way that the class is minor-closed and so that minor operations in a graph representation correspond precisely to minor operations in the matroid. 
To that end, let $M$ be a matroid on $E$, let $G=(V,E)$ be a graph, and let $\Bb$ be a subset of the set of loops of $G$. 
We say the matroid $M$ is \emph{bicircular}, \emph{represented} by the biased graph $(G,\Bb)$, if $M = F(G,\Bb)$. 
That is, $M$ is the matroid on $E$ whose collection of circuits consists of the balanced loops of $G$ along with the edge sets of bicycles of $G$ that do not contain a balanced loop. 
In other words, a bicircular matroid is a frame matroid that has a representation $(G,\Bb)$ in which $\Bb$ contains only loops. 

When $(G,\Bb)$ is a biased graph representing a matroid $M$, we call a subgraph induced by the edges of a circuit of $M$ a \emph{circuit-subgraph} of $G$. 
When $M$ is frame and $\Bb$ is empty, the circuit-subgraphs of $G$ are precisely its bicycles. 
It will be convenient to have names for the different forms of circuit-subgraphs. 
We call a pair of unbalanced cycles that meet in exactly one vertex \emph{tight handcuffs}, and 
a pair of disjoint unbalanced cycles together with a minimal path linking them \emph{loose handcuffs}. 
Thus in general, the circuit-subgraphs of a biased graph $(G,\Bb)$ representing a frame matroid are 
balanced cycles, along with 
thetas and handcuffs that contain no balanced cycle. 

It is often convenient to speak of paths, cycles, and subgraphs induced by a set of edges as if they consist of just their edge sets. 
For example, when $M$ is a bicircular matroid represented by the graph $G$, we may say that a theta subgraph $X$ of $G$ is a circuit of $M$. 
By this we mean that $E(X)$ is a circuit of $M$.

\subsection{Minors of bicircular matroids} \label{minorsofbicircularmatroids}

Let $(G,\Bb)$ be a biased graph, where $\Bb$ contains only loops. 
We now define minor operations for $(G,\Bb)$ so that they agree with the corresponding minor operations in the bicircular matroid $F(G,\Bb)$. 
These are specialisations of minor operations for general biased graphs, which are defined so that they agree with their corresponding operations in a represented frame matroid (see~\cite[Sec.\ 6.10]{oxley}). 
These minor operations for biased graphs may also be applied in a framework for a general quasi-graphic matroid so that they agree with the matroid minor, and we will have occasion to do so in Section \ref{warthog}. 
Though extending these operations to the quasi-graphic setting is straightforward, 
some technical details are required to explain them fully, none of which apply for our applications (see~\cite{MR4037634} or~\cite{JGT:JGT22177} for these details). 
We believe the reader will have no trouble finding the required minors. 

To delete an element $e$ from $(G,\Bb)$: 
when $e$ is not a balanced loop, define $(G,\Bb) \del e = (G \del e, \Bb)$, while if $e \in \Bb$ define $(G, \Bb) \del e = (G \del e, \Bb - \{e\})$. 
To contract an element $e$ of $(G,\Bb)$ there are three cases to consider. 
(1) In the case $e$ is a balanced loop, define $(G, \Bb) /e = (G, \Bb) \del e$. 
(2) When $e$ is an edge with distinct ends, 
define $(G,\Bb)/e = (G/e, \Bb)$. 
(3) If $e$ is an unbalanced loop, say incident with the vertex $u$, then let $G^{\circ e}$ be the graph obtained from $G$ by deleting $e$ and replacing each $u$-$v$ edge (where $v \neq u$) with a loop incident with $v$; if $u$ is now isolated, delete $u$, and let $\Bb^{\circ e}$ be the set of loops $\Bb \cup \{e' : e'$ is a loop incident with $u$ in $G\}$. 
Define $(G,\Bb)/e = (G^{\circ e}, \Bb^{\circ e})$. 

The following fact follows immediately from the definitions. 

\begin{prop} \label{wakawaka} 
Let $M$ be a bicircular matroid represented by the biased graph $(G,\Bb)$, where $\Bb$ contains only loops. 
Then for every element $e$ of $M$, 
\begin{itemize} 
\item $M \del e = F((G, \Bb) \del e)$, and 
\item $M/e = F((G, \Bb)/e)$. 
\end{itemize} 
\end{prop}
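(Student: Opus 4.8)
The plan is to verify each of the two bullet points directly by comparing, on the nose, the circuits of the matroid obtained from the biased-graph minor operation with the circuits of the corresponding matroid minor. Since the statement asserts that the explicit minor operations on $(G,\Bb)$ defined in Section \ref{minorsofbicircularmatroids} reproduce the matroid minor, the natural strategy is to unwind both sides and check they agree on circuit-subgraphs. Because $(G,\Bb)$ represents the frame matroid $F(G,\Bb)$, its circuits are exactly the balanced loops together with the bicycles containing no balanced loop; so in each case I would identify this collection for the modified biased graph and match it against the known circuit structure of $M \del e$ and $M/e$.

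For deletion, the argument is short. If $e$ is not a balanced loop, $(G,\Bb)\del e = (G \del e, \Bb)$, and a subgraph of $G \del e$ is a circuit-subgraph exactly when it was one in $G$ avoiding $e$; this is precisely the circuit collection of $M \del e$. The case $e \in \Bb$ is identical except that $e$ is removed from the distinguished loops, which only affects circuits equal to the loop $e$ itself. For contraction I would split into the three cases of the definition. The balanced-loop case reduces to deletion since a balanced loop is a matroid loop, so $M/e = M \del e$. The case where $e$ has distinct ends is the familiar graphic contraction $G/e$; here I would check that bicycles of $G$ through $e$ become bicycles of $G/e$ after removing $e$, matching the standard description of circuits of $M/e$ as minimal dependent sets $C$ with $e \notin \cl_M(C)$. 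The most delicate case is the unbalanced loop, case (3).

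The main obstacle, and where I would spend the most care, is the unbalanced-loop contraction $G^{\circ e}$. Contracting a matroid element $e$ transforms circuits $C$ with $e \in C$ into circuits $C - e$ of $M/e$. In the bicircular picture, the bicycles of $G$ containing the unbalanced loop $e$ at $u$ are exactly those subgraphs that, together with $e$, have one more edge than vertex; removing $e$ and its vertex leaves subgraphs that must become bicycles in $G^{\circ e}$. The operation of replacing each $u$--$v$ edge by a loop at $v$ is precisely what turns such a subgraph into a bicycle of the new graph, and newly created loops join $\Bb^{\circ e}$ exactly when they arose from edges that completed a cycle through $u$ and $e$. I would verify carefully that this correspondence is a bijection preserving minimality, so that the circuit collections coincide.

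Since the circuit axioms determine a matroid, establishing agreement of circuit collections in all cases suffices. Rather than carry out these routine verifications, I would simply note that they follow immediately from the characterisation of circuits of $F(G,\Bb)$ as balanced loops and balanced-loop-free bicycles, together with the standard descriptions of circuits of $M \del e$ and $M / e$; this is exactly what the paper indicates by stating that the proposition ``follows immediately from the definitions.''
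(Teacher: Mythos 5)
Your proposal is correct and takes essentially the same route as the paper, which offers no argument beyond asserting that the proposition follows immediately from the definitions: the intended verification is exactly your case-by-case matching of circuit collections, and you rightly single out the unbalanced-loop contraction as the only case needing real care. One small repair: your parenthetical description of the circuits of $M/e$ as ``minimal dependent sets $C$ with $e \notin \cl_M(C)$'' is wrong (a circuit of the form $C-e$, where $C$ is a circuit of $M$ containing $e$, satisfies $e \in \cl_M(C-e)$); the correct statement, which your verification should match against, is that the circuits of $M/e$ are the minimal nonempty sets $C-e$ as $C$ ranges over the circuits of $M$.
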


Since in every case $(G,\Bb) \del e$ and $(G,\Bb)/e$ is a biased graph with no balanced cycles other than loops, it is clear that these definitions yield a minor closed class of frame matroids consisting precisely of the set $\{B(G) : G$ is a graph$\}$ along with all minors of a matroid in this set. 

When $M$ is a loopless bicircular matroid, there is no reason to use the biased graph representation $(G,\emptyset)$, so in this case we may simply refer to the graph $G$ as a \emph{bicircular representation}, or simply a \emph{representation}, for $M$, and write $M = B(G)$.

\subsection{Characterising representations for a bicircular matroid} 

The main difficulty in proving excluded minor theorems for minor-closed classes of matroids lies in the fact that there often may be many different representations for the same matroid. 
The class of bicircular matroids is no different in this respect. 
However, the variability in graph representations for a given bicircular matroid is limited and fully understood. 

Wagner~\cite{MR815399} and Coullard, del Greco, \& Wagner~\cite{COULLARD1991223} characterised the graphs representing a given connected bicircular matroid in terms of the following three operations. 
Let $M$ be a connected bicircular matroid represented by the graph $G$. 
\begin{enumerate}[label=(\arabic*)]
\item \label{type1description} Suppose $G$ has a vertex $v$ such that 
{either $G-v$ is acyclic, or} 
$G$ has a 1-separation $(H,K)$ with $V(H) \cap V(K) = \{v\}$ and 
$H-v$ is acyclic. 
Let $L$ be a
{(possibly trivial)}
line of 
{$G$, contained in} 
$H$ 
{if $G$ has the 1-separation $(H,K)$ at $v$,} 
with end vertices $v$ and $z$. 
Let $e$ be the edge in $L$ incident to $v$, and let $w \in V(L)-v$. 
Let $G'$ be the graph obtained from $G$ by redefining the incidence of $e$ so that $e$ is incident to $w$ instead of $v$. 
Then $G'$ is obtained from $G$ by \emph{rolling $L$ away from $v$}. 
The inverse operation is that of \emph{unrolling $L$ to $v$}. 
Each of $G$ and $G'$ are said to be obtained from the other by a \emph{rolling} operation. 
We call the vertex $v$ an \emph{apex} vertex of $G$. 
\item \label{type2description} 
Suppose $G$ has a vertex $v$ and a vertex $u \neq v$ such that either $G-\{u,v\}$ is acyclic, or $G$ has a 1-separation $(H,K)$ such that $V(H) \cap V(K) = \{v\}$, $u \in V(H) - V(K)$, and $H - \{u,v\}$ is acyclic. 
Assume further that in either case $u$ has degree three and is an end vertex of exactly three 
{(possibly trivial)} 
lines $L_1, L_2, L_3$, where $L_1$ has second end $v$, and 
$L_2$ and $L_3$ share their other end $w \notin \{u,v\}$ in common
{(Figure \ref{rotation_operation})}. 
Let $e_1$ be the edge of $L_1$ incident to $v$, and let $e_2$ be the edge of $L_2$ incident to $w$. 
Let $G'$ be the graph obtained from $G$ by redefining the incidence of $e_1$ and $e_2$ so that $e_1$ is incident to $w$ instead of $v$, and $e_2$ is incident to $v$ instead of $w$. 
Then $G'$ is obtained from $G$ by a \emph{rotation} operation. 
We call $u$ the \emph{rotation vertex} of $G$ and $G'$, and call $L_1$, $L_2$, and $L_3$ the \emph{rotation lines} of $G$; a 
{trivial rotation line} 
is a \emph{rotation edge}. 
We call the vertex $v$ an \emph{apex} vertex of $G$. 
{In general any subset of the rotation lines $L_1$, $L_2$, $L_3$ may consist of trivial lines; in the case that $B(G)$ is cosimple, each of $L_1$, $L_2$, and $L_3$ is a rotation edge. A rotation operation is illustrated in Figure \ref{rotation_operation}.  
\begin{figure}[tbp] 
\begin{center} 
\includegraphics[scale=0.95]{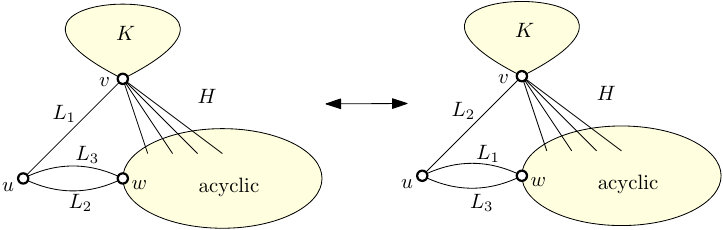}
\end{center} 
\caption{{A rotation operation.}}
\label{rotation_operation} 
\end{figure} }
\item Let $Y$ be a line (respectively, balloon) of $G$. 
Let $G'$ be the graph obtained by replacing 
{$G[Y]$ with a subgraph with edge set $Y$ so that $G'[Y]$ is a line (resp.\ balloon) with the}
same vertices (resp.\ vertex) of attachment. 
Then $G'$ is obtained from $G$ by a \emph{replacement} operation. 
\end{enumerate}

We state the main results of~\cite{COULLARD1991223} and~\cite{MR815399} here slightly more precisely than in the original papers, but in each case the full result stated here is proved in the respective paper, with the exception of the adjective ``vertical" in Theorem \ref{duck} (which is a straightforward extension).  
Theorem \ref{duck} is a representation theorem for 3-connected  bicircular matroids of rank at least 5, while Theorem \ref{goose} does the job for all connected bicircular matroids. 

\begin{thm}[\cite{MR815399}, Theorem 5] \label{duck} 
Let $M$ be a 
vertically 3-connected
matroid of rank at least 5, and let $G$ and $G'$ be bicircular representations for $M$. 
Then $G$ is obtained from $G'$ by either a sequence of rolling operations or a rotation operation. 
\end{thm}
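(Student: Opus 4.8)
The plan is to deduce the statement from the original, $3$-connected form of the theorem (Wagner's Theorem 5 of \cite{MR815399}); the only new content is the replacement of ``$3$-connected'' by ``vertically $3$-connected,'' so the entire argument reduces to understanding the gap between the two hypotheses and checking that it is accounted for by parallel loops in the graphs. Since $M$ is vertically $3$-connected it is connected and has no non-trivial series class; and since $M = B(G) = B(G')$ for graphs $G, G'$, it is loopless. The only feature distinguishing $M$ from a $3$-connected matroid is therefore the possible presence of non-trivial parallel classes.

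First I would pass to the simplification $M_0 = \si(M)$. Simplification preserves rank, so $r(M_0) = r(M) \geq 5$, and I claim $M_0$ is $3$-connected. Being simple, $M_0$ has the property that every set of size at least two has rank at least two, so each of its $2$-separations is vertical; and any vertical $2$-separation of $M_0$ pulls back to one of $M$ (place each deleted parallel element on the side of its retained representative, which alters neither the ranks of the two sides nor $r(M)$), contradicting vertical $3$-connectivity of $M$. As $M$ is connected, so is $M_0$; hence $M_0$ is $3$-connected.

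Next I would realise this simplification graphically. Any two parallel elements of $M$ form a circuit of size two, that is, a bicycle consisting of two graph-loops at a common vertex (as in the proof of Lemma \ref{Nvert3}); hence each parallel class of $M$ appears, in $G$ and in $G'$ alike, as a bunch of graph-loops incident to a single vertex. Deleting from each class all but its retained representative is therefore the deletion of graph-loops, and yields graphs $G_0$ and $G_0'$ with $B(G_0) = B(G_0') = M_0$. Applying the $3$-connected case of the theorem to $G_0$ and $G_0'$ gives that $G_0$ is obtained from $G_0'$ by a sequence of rolling operations or by a single rotation.

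Finally I would lift this relation to $G$ and $G'$. The key point is that rolling and rotation only redefine the incidences of edges lying on (non-trivial) lines, whereas a graph-loop is a trivial balloon and lies on no line; consequently these operations never move a graph-loop and never delete its host vertex. Thus the correspondence between $G_0$ and $G_0'$ supplied by Wagner's theorem carries the host vertex of each retained loop to the host of that same loop in $G_0'$, and re-attaching the full parallel class of loops at that vertex shows that the identical sequence of rollings, or the same rotation, transforms $G$ into $G'$. The main obstacle is precisely this lifting: one must confirm that re-inserting the deleted loops is compatible with every intermediate operation, and in particular that the hypotheses defining a valid rolling or rotation (the acyclicity of $H - v$ or of $H - \{u,v\}$) are insensitive to loops, which disappear when their unique incident vertex is removed. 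Verifying that the parallel loops never force an operation outside the rolling/rotation repertoire is what makes the extension ``straightforward'' rather than immediate.
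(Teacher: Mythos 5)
Your proposal is correct and follows the same route the paper intends: the paper offers no proof of Theorem~\ref{duck} at all, quoting it as Theorem~5 of \cite{MR815399} and remarking only that the adjective ``vertically'' is a straightforward extension of the original $3$-connected statement, and your argument---pass to $\si(M)$, which is $3$-connected because a loopless, simple, vertically $3$-connected matroid is $3$-connected; realise the simplification graphically by deleting all but one loop at each multi-loop vertex to get $G_0$, $G_0'$; apply Wagner's original theorem; and lift---is precisely that extension written out. The lifting step is sound for exactly the reason you flag: the reinserted duplicate loops sit only at vertices of $G_0$ that already retain a loop, and the acyclicity hypotheses of a valid rolling or rotation force every loop of the affected side of the $1$-separation to lie at the removed vertex (or vertices), so the degree and acyclicity preconditions are unchanged by the duplicates, every operation legal in $G_0$ remains legal in $G$, no operation ever moves a loop, and the same sequence therefore carries $G$ to $G'$.
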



\begin{figure}[tbp] 
\begin{center} 
\includegraphics[scale=0.95]{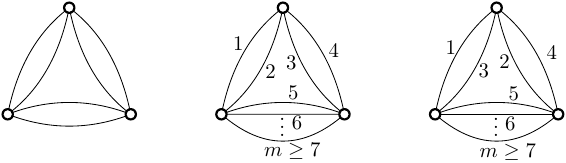} 
\end{center} 
\caption{Rank-3 rearrangements.}
\label{quartetopus1} 
\end{figure} 
\begin{figure}[tbp] 
\begin{center} 
\includegraphics[scale=0.95]{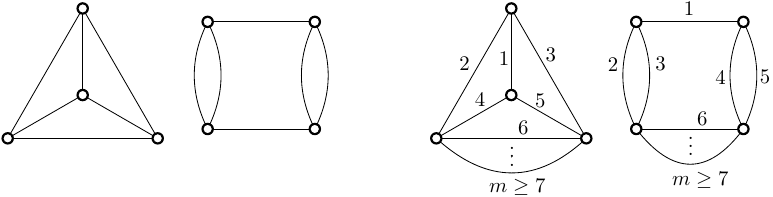}
\end{center} 
\caption{{Rank-4 rearrangments.}}
\label{quartetopus2} 
\end{figure} 
The graphs illustrated in Figures \ref{quartetopus1} and \ref{quartetopus2} show that the assumption of rank at least five in Theorem \ref{duck} is required~\cite{COULLARD1991223}. 
The rank-3 uniform matroid on six elements is represented by the 3-vertex graph with six edges shown in Figure \ref{quartetopus1} with any labelling of its edges. 
Labels may be swapped on edges as indicated in the pair of 3-vertex graphs with $m \geq 7$ edges as shown in Figure \ref{quartetopus1}; both graphs represent the same bicircular matroid. 
The rank-4 uniform matroid on six elements is represented by each of the 4-vertex graphs with six edges shown in Figure \ref{quartetopus2}, with any labelling of their edges. 
For any integer $m \geq 7$, each of the graphs at right in Figure \ref{quartetopus2} has $m-5$ edges linking the pair of vertices labelled $6, \ldots, m$; both graphs provide bicircular representations for the same matroid. 
{Given such a pair of graphs representing the same matroid, we say that one has been obtained from the other as a \emph{rearrangement}, or by a \emph{rearrangement} operation. 
}

For any connected graph $G$, let $\co(G)$ be the graph obtained from $G$ by contracting all but one edge of each balloon and all but one edge of each line of $G$. 
{By Proposition \ref{elise}}, $\co(G)$ provides a representation for $\co(B(G))$. 
Let $\mathscr G$ be the family of graphs $\{ G : \co(G)$ is one of the graphs show in Figure \ref{quartetopus1} or \ref{quartetopus2}$\}$. 

\begin{thm}[\cite{COULLARD1991223}, Theorem 4.11] \label{goose} 
Let $M$ be a connected matroid, and let $G$ and $G'$ be bicircular representations for $M$. 
If 
{not both $G$ and $G'$ are in $\mathscr{G}$,} 
then $G$ is obtained from $G'$ by either a sequence rolling and replacement operations, or a sequence of rotation and replacement operations. 
\end{thm}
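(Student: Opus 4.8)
The plan is to bootstrap from the vertically 3-connected, rank-at-least-five case supplied by Theorem \ref{duck}, peeling off first the series classes and then the essential 2-separations, inducting on $|E(M)|$; throughout I use that the operations are local, so that an admissible sequence performed inside a subgraph remains admissible in the whole. I would begin by removing series classes. By Proposition \ref{elise} every non-trivial series class of $M$ is realized, in each of $G$ and $G'$, as a balloon or a line; since the circuits of $M$ through such a class determine whether it behaves like a subdivided loop or like a path, its type and its vertices of attachment are forced by $M$, and only the internal arrangement of its edges is free. That internal freedom is exactly the content of the replacement operation (3). Using that $\co(G)$ and $\co(G')$ both represent $\co(M)$, it therefore suffices, modulo replacement operations, to relate $\co(G)$ and $\co(G')$; so I may assume $M$ is cosimple (has no non-trivial series class), and then $G, G' \notin \mathscr G$ says precisely that neither $\co(G)$ nor $\co(G')$ is one of the graphs of Figures \ref{quartetopus1} and \ref{quartetopus2}.

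Now I split on connectivity. Since $M$ is cosimple, it is vertically 3-connected exactly when it is essentially 3-connected. In that case, if $r(M) \geq 5$ then Theorem \ref{duck} gives immediately that $G$ arises from $G'$ by rollings or by a single rotation; and if $r(M) \leq 4$ then a finite inspection shows that a cosimple, vertically 3-connected bicircular matroid of rank three or four is exactly one of the matroids of Figures \ref{quartetopus1} and \ref{quartetopus2}, so $\co(G)$ and $\co(G')$ lie in that list and $G, G' \in \mathscr G$, against hypothesis. This is where the excluded family $\mathscr G$ is forced upon us.

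If instead $M$ is not essentially 3-connected, it has an essential 2-separation $(A,B)$, whose shape Lemma \ref{bobby} describes as two bicycle-carrying parts joined by a path. By Proposition \ref{clarinet} this realizes $M$ as a 2-sum $M = M_1 \oplus_2 M_2$ through a loop-sum, and since $(A,B)$ is a separation of the matroid, both $G$ and $G'$ split along it into representations of the same smaller connected bicircular matroids $M_1$ and $M_2$. The induction hypothesis relates the two representations of $M_1$, and of $M_2$, by admissible operations, and as these are confined to their respective sides of the shared loop-vertex they reassemble into a single admissible sequence from $G$ to $G'$.

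The hardest step, I expect, is this gluing. One must check that the loop-sum basepoint sits rigidly, so that the 2-sum introduces no freedom beyond rollings, rotations, and replacements: concretely, that any Whitney-type flip of a part, or any reattachment of the connecting path across the separation, changes the loose-handcuffs circuits through the basepoint and hence changes $M$. One must also reconcile the rolling-versus-rotation dichotomy across the two sides, so that the global conclusion is of a single type. The only remaining labor is the finite base-case verification that the rank-three and rank-four exceptions are exactly $\mathscr G$.
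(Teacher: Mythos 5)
First, a point of order: the paper does not prove this statement at all --- it is imported verbatim from Coullard, del Greco \& Wagner (\cite{COULLARD1991223}, Theorem 4.11), so your proposal can only be judged on its own soundness, and it has genuine gaps. The first is your opening reduction. You claim that for a non-trivial series class ``its type and its vertices of attachment are forced by $M$,'' so that only replacement freedom remains and one may pass to $\co(G)$ versus $\co(G')$. This is exactly backwards: the rolling operation exists precisely because this is false. Rolling a line $L$ with ends $v,z$ away from $v$ converts $L$ into a balloon attached at a different vertex, and the matroid cannot see the difference; even after cosimplification, rolling moves a loop from one vertex to another. Your reduction therefore discards the very freedom the theorem needs rolling to account for. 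The base case is also wrong: it is not true that every cosimple, vertically 3-connected bicircular matroid of rank $3$ or $4$ lies in $\mathscr G$. The family $\mathscr G$ consists only of graphs whose cosimplifications are the specific graphs of Figures \ref{quartetopus1} and \ref{quartetopus2}; a generic rank-$3$ or rank-$4$ bicircular matroid is not in $\mathscr G$, is not covered by Theorem \ref{duck}, and your argument says nothing about it. Handling those non-exceptional low-rank matroids is a substantial part of the content of the cited theorem, not a contradiction with the hypothesis, and it cannot be dispatched by ``finite inspection'' since rank-$3$ and rank-$4$ bicircular matroids may have arbitrarily many elements.

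The inductive gluing step fails on two further counts. (a) Rolling and rotation are not local operations: their definitions require a 1-separation $(H,K)$ of the \emph{whole} graph with $H-v$ (respectively $H - \{u,v\}$) acyclic. A rolling admissible inside the subgraph representing one side of the 2-sum generally ceases to be admissible once the other side is attached, because the attached side places cycles inside $H$; so admissible sequences on the parts do not concatenate into an admissible sequence on $G$. (Note also that Proposition \ref{clarinet} runs in the wrong direction --- loop-sum of graphs yields 2-sum of matroids --- and Lemma \ref{bobby} gives two bicycle-carrying parts joined by a \emph{path}, which is not literally a loop-sum decomposition, so even the splitting of $G$ and $G'$ into matched representations of $M_1$ and $M_2$ needs an argument.) (b) More seriously, the induction hypothesis need not be available for the parts: a side of the 2-sum can lie in $\mathscr G$ even when $G$ and $G'$ do not, since membership in $\mathscr G$ is tested on the part's own cosimplification. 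For such a part the two induced representations may differ by the label-swapping moves of Figures \ref{quartetopus1} and \ref{quartetopus2}, which are not rollings, rotations, or replacements, and one must show that the rest of the graph rigidifies or absorbs this freedom. That, together with the rolling-versus-rotation reconciliation you flag but do not resolve, is where the real difficulty of the theorem lives, and the sketch leaves it untouched.
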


{
It is a straightforward consequence of the results in~\cite{COULLARD1991223} that if $B(G)=B(G')$ and $G$ is in $\mathscr G$, then $G'$ is in $\mathscr G$, and, though they may have different vertices of attachment, each line of $G'$ is obtained from a line of $G$ by a replacement operation. 
If $G$ and $G'$ are in $\Ggg$ and $\co(G')$ is a rearrangement of $\co(G)$, then we also say that $G'$ is a \emph{rearrangement} of $G$, and that $G'$ is obtained from $G$ via rearrangement and replacement operations.}
For convenience, we restate Theorem 3.3, 
{along with this slight extension,} 
in the form we will frequently apply it: 

\begin{lem} \label{duckduck} 
Let $M$ be an essentially 3-connected matroid, and let $G$ and $G'$ be bicircular representations for $M$. 
Then $G$ is obtained from $G'$ by either a sequence of rolling and replacement operations, 
a sequence of rotation and replacement operations, 
{or 
a sequence of rearrangement and replacement operations.}
\end{lem}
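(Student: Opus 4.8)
The plan is to deduce the lemma directly from Theorem~\ref{goose}. First note that an essentially 3-connected matroid of rank at least five is in particular connected, so the hypothesis of Theorem~\ref{goose} on $M$ is satisfied. The only additional requirement in that theorem is that neither $G$ nor $G'$ belong to the exceptional family $\mathscr G$. Hence the entire task reduces to showing that a bicircular representation of an essentially 3-connected matroid of rank at least five cannot lie in $\mathscr G$; once this is in hand, Theorem~\ref{goose} yields immediately that $G$ is obtained from $G'$ by a sequence of rolling and replacement operations or a sequence of rotation and replacement operations.

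To rule out membership in $\mathscr G$, recall that $G \in \mathscr G$ precisely when $\co(G)$ is one of the graphs of Figure~\ref{quartetopus1} or Figure~\ref{quartetopus2}, and that $\co(G)$ represents $\co(M)$. Each of these exceptional graphs is drawn on three or four vertices, so the matroid it represents has rank at most four. Membership of $G$ in $\mathscr G$ would therefore force $\co(M)$ to be one of these finitely many low-rank exceptional matroids. I would try to derive a contradiction by showing that, under our hypotheses, $\co(M)$ has rank at least five. Since $M$ is essentially 3-connected, $\co(M)$ is vertically 3-connected, and I would compare $r(\co(M))$ with $r(M)$ using the fact that $\co(G)$ arises from $G$ by contracting all but one edge of each line and balloon. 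I expect this rank comparison to be the main obstacle: cosimplification deletes the internal vertices of non-trivial lines and balloons and so can strictly lower the rank, so the hypothesis $r(M) \geq 5$ must be leveraged carefully — through the vertical 3-connectivity of $\co(M)$ together with a case check against the specific graphs of Figures~\ref{quartetopus1} and~\ref{quartetopus2} — before one may conclude that none of those exceptional configurations can occur.

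An alternative route, which matches the appearance of replacement operations in the conclusion, is to pass to the cosimplification explicitly. Apply the rank-at-least-five, vertically 3-connected statement of Theorem~\ref{duck} (with the straightforward ``vertical'' extension noted above) to $\co(M)$ in order to relate $\co(G)$ and $\co(G')$ by rolling operations alone, or by a rotation. These operations then lift to rolling or rotation operations on $G$ and $G'$ themselves, and the residual discrepancy — namely how each line and balloon of $M$ is realised along its series class — is absorbed into replacement operations. The bookkeeping that the operations lift correctly, and that the only leftover differences between $G$ and $G'$ are replacements, is routine; as in the first approach, the substantive point is again clearing the exceptional family, which is exactly where the rank-five hypothesis must do its work.
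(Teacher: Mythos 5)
Your proposal follows the same route the paper itself takes---the paper in fact offers no proof at all, introducing Lemma~\ref{duckduck} with the words ``we restate Theorem~\ref{goose} in the form we will frequently apply it''---and you have correctly isolated the single point on which this restatement hinges: that no bicircular representation of an essentially 3-connected matroid of rank at least five can lie in $\mathscr G$. But you leave exactly that point open, in both of your routes (the second route founders on the same issue, since Theorem~\ref{duck} needs the \emph{cosimplification} to have rank at least five, not $M$ itself), and the obstacle you flag is not one that a more careful case check can clear: the needed claim is false. Because $\mathscr G$ is defined through $\co(\cdot)$, it is closed under subdividing edges, i.e.\ under series-extending the matroid; series extension preserves essential 3-connectivity and raises the rank without bound, while leaving the cosimplification---and hence membership in $\mathscr G$---unchanged. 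So the hypothesis $r(M) \geq 5$ gives no purchase whatsoever on avoiding $\mathscr G$.

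Worse, the lemma as literally stated fails on such examples, so the gap cannot be filled by any argument without strengthening the hypothesis. Label the vertices of $2C_3$ as $u,v,w$; this graph represents the rank-3 uniform matroid on six elements under any labelling of its edges. Let $H_1$ be obtained by subdividing one $u$-$v$ edge into a path $s_1s_2s_3$, and let $H_2$ be the same graph except that the labels on the remaining $u$-$v$ edge and on one $u$-$w$ edge are exchanged. Both graphs represent the same matroid $M$: its circuits are the $4$-element subsets of the five unsubdivided elements, together with $\{s_1,s_2,s_3\}$ plus any three of them---a family invariant under all permutations of the five unsubdivided elements. This $M$ has rank $5$ and is essentially 3-connected, since its cosimplification is the uniform matroid, which is vertically 3-connected. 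Yet $H_1$ and $H_2$ are not related by rolling, rotation, and replacement: both are loopless and 2-connected, so they admit no 1-separation and neither rolling nor rotation ever applies; replacement can only reorder the path $s_1s_2s_3$; and $H_1 \neq H_2$ up to vertex relabelling, because two specific labels are parallel edges in $H_1$ but not in $H_2$. The correct form of the lemma requires $r(\co(M)) \geq 5$ (equivalently, that representations still have at least five vertices after the $\co$ operation), and each place the paper invokes Lemma~\ref{duckduck} implicitly needs this stronger condition to be verified for the matroid at hand.
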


We say a connected bicircular matroid $M$ is 
\emph{type 1} if it has a bicircular representation $G$
{as described in paragraph \ref{type1description} on page \pageref{type1description}. That is, $G$}
{has} a vertex $v$ such that $G-v$ is acyclic, or $G$ has a 1-separation $(H,K)$ with $V(H) \cap V(K) = \{v\}$ where $H-v$ is acyclic. 
{We say $M$ is}
\emph{type 2} if $M$ is not type 1 but has a representation as described in paragraph \ref{type2description}
{on page \pageref{type2description}}, 
{to which a rotation operation may be applied.}
{We say $M$ is} 
\emph{type 3} otherwise. 
By Theorem \ref{goose}, 
if $M$ has a type 3 representation then $M$ is not type 1 or 2. 
Accordingly, we call a representation witnessing that $M$ is type 1, 2, or 3, respectively, a \emph{type 1, 2, or 3 representation}.  
By 
{Theorem \ref{goose},} 
if 
{$\co(M)$} 
has rank at least five, {and $M$} is essentially 3-connected and type 3, then $M$ has a unique bicircular representation up to replacement. 
Theorem \ref{duck} says that if $M$ has rank at least five, is type 3 and vertically 3-connected, then $M$ has a unique graph representation. 

An apex vertex is almost always unique, as we note in the following straightforward proposition. 

\begin{prop} \label{serabande} 
Let $G$ be a type 1 or type 2 representation of a connected bicircular matroid. 
Then $G$ has a unique apex vertex unless $\co(G)$ is one of the graphs shown in Figure \ref{nonuniqueapex}, in which case $v_1$ and $v_2$ are both apex vertices of $G$. 
\end{prop}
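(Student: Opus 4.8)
The plan is to characterize when a connected type 1 or type 2 bicircular matroid can admit more than one apex vertex, and to show the only obstructions are the specific configurations depicted in Figure \ref{nonuniqueapex}. First I would fix a type 1 or type 2 representation $G$ of a connected bicircular matroid $M$ and suppose it has two distinct apex vertices $v_1 \neq v_2$. By definition of apex vertex (from the rolling and rotation operations (1) and (2) above), for each $v_i$ there is a 1-separation $(H_i,K_i)$ of $G$ with $V(H_i)\cap V(K_i)=\{v_i\}$ such that $H_i-v_i$ is acyclic (type 1), or the more elaborate rotation structure with a rotation vertex $u_i\neq v_i$ and rotation lines (type 2). The core of the argument is that each apex vertex certifies that essentially all of the ``bicyclic'' part of $G$ concentrates near it: removing $v_i$ (together with the rotation vertex in type 2) leaves an acyclic graph, so all cycles of $G$ must pass through the small vertex set witnessing the apex structure.

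The key steps, in order, are as follows. I would first reduce to considering $\co(G)$: since rolling/rotation incidence changes do not alter balloons or lines as series classes (by Proposition \ref{elise}), and contracting all but one edge of each balloon and line does not affect which vertices can serve as apices, it suffices to analyze the contracted graph $\co(G)$. Second, I would translate the acyclicity conditions: an apex $v_1$ forces $G-v_1$ (respectively $G-\{u_1,v_1\}$ in type 2) to be acyclic, and similarly for $v_2$. Third, I would intersect these constraints: if both $G-v_1$ and $G-v_2$ are acyclic, then every cycle of $G$ contains both $v_1$ and $v_2$, which tightly restricts the cyclic structure. In the type 1 case this means all cycles pass through the two-vertex set $\{v_1,v_2\}$, and since $M$ is connected and has rank large enough to be a genuine frame matroid, a short analysis of how unbalanced loops and lines can attach to these two vertices while keeping both side-graphs acyclic forces $\co(G)$ into one of the listed pictures. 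Fourth, I would handle the type 2 (rotation) case and the mixed case where one apex is of type 1 and the other type 2, using the rotation-vertex structure to pin down the few remaining configurations; these analyses are where the figure's second diagram arises.

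The main obstacle I anticipate is the case analysis bookkeeping: the interaction between the type 1 and type 2 apex conditions, and in particular ruling out spurious large configurations that superficially appear to admit two apices but are excluded because they would fail connectivity, fail to be a valid bicircular representation, or would actually force the two candidate apices to coincide. I would want to argue cleanly that whenever $G-v_1$ and $G-v_2$ are both acyclic (the essential consequence of having two apices), the number of edge-disjoint cycle-containing pieces that can be simultaneously attached to keep both deletions acyclic is bounded, and that each surviving possibility is exactly one of the graphs in Figure \ref{nonuniqueapex}. A secondary subtlety is confirming that in each exceptional graph, $v_1$ and $v_2$ really are \emph{both} apex vertices (the converse direction), which I would verify by exhibiting the explicit 1-separation and acyclicity witness for each of $v_1$ and $v_2$ directly from the figure. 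Throughout, I would lean on Lemma \ref{bobby} and Proposition \ref{cam} to control how the acyclic ``path'' parts attach, since those lemmas already describe the local structure of bicircular representations near separations.
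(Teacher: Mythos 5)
The paper never actually proves this proposition: it is introduced with ``an apex vertex is almost always unique, as we note in the following straightforward proposition,'' and the text passes directly from the statement to its consequences. So there is no authors' argument to compare yours against; your outline is the natural candidate proof, and its central reduction is sound. For two type-1-style apices the completion is short and worth making explicit. If $G-v_1$ and $G-v_2$ are both acyclic, then $G$ has no loops at all (a loop at any vertex is a cycle missing at least one of $v_1,v_2$; so your talk of attaching ``unbalanced loops'' can be dropped), and every cycle of $G$ meets both $v_1$ and $v_2$. Connectivity of $B(G)$ forbids coloops, hence pendant edges; a cut vertex would confine all cycles to one side and make the other side's edges coloops (a connecting path of a loose handcuff cannot pass through a cut vertex twice), so $G$ is $2$-connected. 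Counting edges via the two spanning trees $G-v_1$ and $G-v_2$ gives $|E(G)|=|V(G)|-2+\deg_G(v_i)$ for $i=1,2$, which forces every vertex other than $v_1,v_2$ to have degree exactly $2$; hence $G$ is a union of internally disjoint $v_1$--$v_2$ paths and $\co(G)$ is the multi-edge on $\{v_1,v_2\}$, necessarily one of the graphs of Figure~\ref{nonuniqueapex}. The same style of argument (if all cycles of $G$ were confined to the unique cycle of a single line or balloon, $B(G)$ would have coloops) is what legitimizes your reduction to $\co(G)$, i.e.\ the claim that an apex cannot be an internal vertex of a line or balloon.

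Two genuine wrinkles remain in your plan. First, an apex of a type 1 representation need not satisfy ``$G-v$ is acyclic'' on the nose: the apex is defined through the rolling set-up, and the paper's standard type 1 representations are obtained by rolling a line away from the apex, which creates a balloon whose cycle avoids the apex. Your second step should therefore read: every cycle of $G$ either passes through $v$ or lies in a balloon that can be unrolled to $v$ (equivalently, pass to the unrolled representation before intersecting the acyclicity conditions). Second, the rotation and mixed cases are precisely where the remaining exceptional configuration of Figure~\ref{nonuniqueapex} arises, and your outline defers them entirely to ``bookkeeping''; two observations trim that work, namely that in a type 2 representation no vertex $v$ has $G-v$ acyclic (that would make the matroid type 1, contradicting the definition of a type 2 representation), so the mixed case occurs only inside type 1 representations, and that two rotation apices cannot share their rotation vertex, since the three lines at a single rotation vertex cannot simultaneously play the roles required for two distinct apices. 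Finally, your appeal to Lemma~\ref{bobby} and Proposition~\ref{cam} is misplaced: those concern essential $2$-separations of the matroid, not the apex structure, and nothing in the argument needs them.
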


\begin{figure}[tbp] 
\begin{center} 
\includegraphics[scale=0.95]{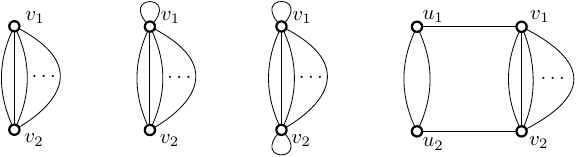}
\end{center} 
\caption{Graphs with more than one apex; there may be any number of $v_1$-$v_2$ edges. }
\label{nonuniqueapex} 
\end{figure} 

An immediate consequence of Proposition \ref{serabande} is the following. 

\begin{prop} \label{wawawawa}
Let $G$ be a type 1 or type 2 representation of a vertically 3-connected matroid of rank at least 5. 
Then $G$ has a unique apex vertex. 
\end{prop}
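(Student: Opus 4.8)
The plan is to derive Proposition~\ref{wawawawa} as an immediate corollary of Proposition~\ref{serabande}, by ruling out the exceptional graphs listed in Figure~\ref{nonuniqueapex} under the hypothesis that $M$ is vertically 3-connected of rank at least 5. Since $G$ is a type 1 or type 2 representation, Proposition~\ref{serabande} already tells us that $G$ has a unique apex vertex \emph{unless} $\co(G)$ is one of the exceptional graphs, in which case two apex vertices $v_1,v_2$ occur. So the entire task reduces to showing that the exceptional configuration is incompatible with the stated connectivity and rank assumptions.

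First I would examine the structure of the exceptional graphs in Figure~\ref{nonuniqueapex}. These consist of two vertices $v_1,v_2$ joined by some number of parallel $v_1$-$v_2$ edges, together with whatever balloons, loops, or pendant structure hang off $v_1$ and $v_2$ to make them apex vertices. The key observation should be that in any such graph the pair $\{v_1,v_2\}$ forms a small vertex cut, or that the parallel $v_1$-$v_2$ edges form a set contained in a common parallel class of $M$, or that one can read off a nontrivial 2-separation of $B(G)$ directly. Concretely, I would compute the rank of the two sides of the natural partition induced by $\{v_1,v_2\}$ using the formula $r(X)=|V(X)|-a(X)$ from Section~\ref{secconsep}, and check that it yields $r(A)+r(B)-r(M)\le 1$ with both sides large enough to give a vertical 2-separation. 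Because a vertically 3-connected matroid has no vertical 2-separation (it is $3$-connected in the vertical sense by definition), this produces the desired contradiction.

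Alternatively, and perhaps more cleanly, I would argue via the rank bound: each exceptional graph in Figure~\ref{nonuniqueapex} has $\co(G)$ of bounded size, so $\co(M)=\co(B(G))$ has small rank. Since contracting the series classes does not change the rank of a matroid whose rank is at least 5 by more than the number of series reductions, I would check that the exceptional graphs force $r(M)$ to be too small, or that they force a parallel class of size exceeding the bound permitted by Lemma~\ref{Nvert3} (which gives parallel classes of size at most 2 for excluded minors, but more generally vertical 3-connectivity restricts parallel-class structure). The cleanest route is almost certainly to observe that the multiple $v_1$-$v_2$ edges together with the balloon/pendant structure at $v_1$ and $v_2$ directly exhibit an essential 2-separation via Lemma~\ref{bobby}, contradicting the assumption that $M$ is vertically 3-connected (hence essentially 3-connected with no nontrivial series class).

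The main obstacle I anticipate is purely a matter of bookkeeping rather than a genuine difficulty: one must verify for \emph{each} graph in Figure~\ref{nonuniqueapex} that the exceptional structure genuinely produces a vertical 2-separation under the rank-at-least-5 hypothesis, and in particular confirm that the rank assumption is actually needed to exclude small sporadic cases (where the configuration might instead be one of the low-rank exceptions of Figures~\ref{quartetopus1} and \ref{quartetopus2} rather than a true violation of 3-connectivity). I expect the proof to be short—essentially a sentence invoking Proposition~\ref{serabande} followed by the remark that each exceptional graph has a vertical 2-separation, contradicting vertical 3-connectivity—so the real care lies in making sure no exceptional graph slips through by being vertically 3-connected for small edge counts.
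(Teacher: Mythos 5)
Your proposal takes the same route as the paper: the paper gives no written proof at all, presenting the proposition as "an immediate consequence of Proposition \ref{serabande}," which is precisely your plan of invoking that proposition and ruling out the exceptional graphs of Figure \ref{nonuniqueapex} using the two hypotheses. One caution on your preferred "cleanest route": an essential 2-separation via Lemma \ref{bobby} cannot by itself dispose of every exceptional case, because when $G=\co(G)$ (which is forced here, since vertical 3-connectivity forbids non-trivial series classes and hence, by Proposition \ref{elise}, forbids lines and balloons) the exceptional graphs can represent vertically 3-connected matroids — e.g.\ the two-vertex multigraph represents $U_{2,m}$, which has no vertical 2-separation — so these cases are killed only by the rank hypothesis (a connected bicircular representation on so few vertices has rank less than $5$), i.e.\ by your "alternative" rank-bound route, exactly the subtlety your final paragraph anticipates.
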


\section{Finding 2-connected type 3 representations} \label{secfind2}

Let $M$ and $N$ be matroids on a common ground set $E$.  
We say that $M$ and $N$ are \emph{twins relative to} a subset $X$ of $E$, or that \emph{$M$ is a twin for $N$ relative to $X$}, if $M /e = N/e$ for every $e \in X$.  

Let $N$ be an excluded minor for the class of bicircular matroids, and assume $N$ has rank at least 
{ten}. 
Our proof strategy is to construct a bicircular twin $M$ for $N$ relative to a subset $X$ of their common ground set $E$. 
For reasons that will become clear in the course of the proof, 
{the}
set $X$ will contain three elements, and have the following properties: 
\begin{itemize} 
\item  for every subset $Z \subseteq X$, the matroid $N/Z = M/Z$ is vertically 3-connected, and 
\item in a graph representing $M$, the subgraph induced by $X$ has at least five vertices. 
\end{itemize} 
We then use the bicircular twin $M$ to deduce the contradiction that $N$ already contains a smaller excluded minor. 
To do so, our first step is to find an element $e$ for which $N \del e$ is essentially 3-connected having a 2-connected bicircular representation. 

%

\subsection{Series classes in bicircular representations} \label{seriesclasses} 

It is easy to see that if $M$ is a vertically 3-connected bicircular matroid, then for any element $e \in E(M)$, $M \del e$ has at most two non-trivial series classes, because deleting an edge from any graph representing $M$ may leave at most 
{two} 
vertices incident to exactly two edges, and so may leave at most two lines or balloons. 
We can show that this is also the case when $M$ is an excluded minor of rank at least six, at least in the case that $M \del e$ is essentially 3-connected. 

{Let $G$ and $H$ be a pair of graph representations for a connected bicircular matroid $M$. 
By Proposition \ref{elise}, an element $e \in E(M)$ is in a line or a balloon of $G$ if and only if it is in a line or balloon of $H$, and $e$ is in a non-trivial line or balloon of each of $G$ and $H$ if and only if $e$ is in a non-trivial series class of $M$. 
Say an element $e \in E(M)$ is \emph{involved in an $r$-operation} with respect to $G$ and $H$, or just \emph{involved} for short, if $H$ is obtained from $G$ via a sequence of rolling, rotation, replacement, or rearrangement operations and $e$ is an edge whose incidences are redefined in any of these operations. 
An edge is \emph{uninvolved} if it is not involved. 
Let us say that $G$ and $H$ have \emph{compatible vertex labelings} if their respective vertex sets are labelled such that: 
\begin{itemize}
\item each uninvolved edge has the same ends in $G$ as in $H$; 
\item if $G$ and $H$ are type 1 or 2, then $v$ is an apex vertex of $G$ if and only if $v$ is an apex vertex of $H$; 
\item if $G$ and $H$ are type 2, then $u$ is the rotation vertex of $G$ if and only if $u$ is the rotation vertex of $H$;  
\item for each non-trivial series class $X$ of $M$, the sets 
of internal vertices of $X$ in $G$ and in $H$ are the same; 
\item if $X$ is a line (resp.\ balloon) of $G$ that contains an involved edge, and $X$ is a line (resp.\ balloon) of $H$, then 
$X$ has the same vertices (resp.\ vertex) of attachment in $G$ as in $H$; 
\item if $X$ is a (possibly trivial) line in one of $G$ or $H$ while in the other graph $X$ is a balloon, then the vertex of attachment of the balloon is one of the vertices of attachment of the line in the other graph. 
\end{itemize}
Clearly, given any pair of graphs $G$ and $H$ representing a common bicircular matroid, compatible vertex labelings always exist. Thus we may assume that such a pair of graphs come equipped with compatible vertex labelings whenever necessary.}
Because vertex labels in our graphs carry no meaning nor have any significance other than as labels, the distinction between equality and isomorphism of two graphs with the same edge set that are identical after relabelling vertices is not relevant for us. 
We therefore say two graphs $G$ and $H$ are \emph{equal up to rolling, rotation, replacement, and rearrangement}, 
and write ``$G=H$ 
up to rolling, rotation, replacement, and rearrangement'', 
when $G$ may be obtained from $H$ via a sequence of rolling, rotation, replacement, or rearrangement operations, and relabelling of vertices. 

Let us call a vertex that is incident to exactly two edges \emph{deficient}. 

\begin{prop} \label{ernest} 
Let $G$ and $H$ be representations for a connected bicircular matroid $M$ with 
{compatible vertex labelings}. 
Let $v$ be a deficient vertex of $G$. 
Then $v$ is deficient in $H$ unless 
\begin{enumerate}[label={\upshape (\roman*)}]
\item $v$ is in a balloon $B$ of $G$ with $\deg_G(v) = 2$, while $H[B]$ is a balloon with $\deg_H(v) = 3$; 
\item $G$ and $H$ are type 1 representations of $M$, $v$ is the apex vertex in $G$ and $H$, $\deg_H(v) = 3$, and $G$ is obtained from $H$ by rolling a line away from $v$; or 
\item $G$ and $H$ are type 1 representations of $M$, $v$ is a vertex of degree 2 in a line $L$ of $G$ that has the apex vertex $u$ of $G$ as an end, $H$ is obtained by rolling $L$ away from $u$, and $\deg_H(v) = 3$. 
\end{enumerate} 
\end{prop}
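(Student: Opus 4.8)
The plan is to analyze how a deficient vertex $v$ of $G$ can fail to be deficient in $H$, using the structure theorem (Theorem \ref{duck}) that constrains how $G$ and $H$ differ. Since $M$ is connected and has rank at least $5$ (implicitly, as the interesting cases arise there), by Theorem \ref{duck} the representation $G$ is obtained from $H$ by a sequence of rolling operations or a single rotation operation. The only operations that change vertex degrees are rolling, rotation, and replacement; plain relabelling preserves degrees, so if $v$ were deficient in both and we only relabelled, there would be nothing to prove. The key observation is that $\deg_G(v) = 2$, and the three exceptional outcomes all leave $\deg_H(v) = 3$. So first I would establish that any operation taking $G$ to $H$ can raise the degree of a given vertex by at most one, whence $\deg_H(v) \in \{2,3\}$ whenever $v$ is not fixed; this reduces the problem to classifying the single operation at $v$ that bumps its degree from $2$ to $3$.

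Next I would go through the three operation types and identify precisely when they change $\deg(v)$. A \emph{replacement} (operation (3)) re-attaches a balloon $B$ (or line) at the same vertex of attachment; the only way a balloon replacement changes the degree of an internal/attachment vertex is the subdivided-loop-versus-loop-with-tail distinction, which is exactly case (i): $v$ sits in a balloon $B$ where $G[B]$ realizes $v$ with degree $2$ but $H[B]$ realizes the same edge set as a balloon with $\deg_H(v) = 3$. A \emph{rolling} operation (operation (1)) redefines the incidence of the edge $e$ at the end of a line $L$ incident to the apex $v$, moving $e$ off of $v$ onto an internal vertex $w$; this decreases $\deg(v)$ and increases $\deg(w)$ by one. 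Viewed from $G$ to $H$ (the inverse, unrolling), this is the mechanism behind cases (ii) and (iii): either $v$ itself is the apex whose degree rises from $2$ to $3$ when a line is unrolled back to it (case (ii)), or $v$ is an internal (degree-$2$) vertex of a line $L$ that loses its extra incidence and gains one when $L$ is rolled, with $u$ the apex end (case (iii)). A \emph{rotation} operation swaps the incidences of $e_1, e_2$ between $v$ and $w$ and so preserves all degrees, hence contributes no new exception.

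The main obstacle, and where the real work lies, is verifying that these are the \emph{only} ways a single rolling/replacement step can take a degree-$2$ vertex to degree $3$, and in particular pinning down the \emph{type 1} hypothesis appearing in (ii) and (iii). Here I would use Proposition \ref{serabande} and the characterisation of apex vertices: rolling and rotation operations are available only at an apex vertex, so $G$ and $H$ must be type 1 (or type 2) representations, and the degree change at $v$ forces the more restrictive type 1 configuration because the rotation vertex $u$ in a type 2 representation has degree at least $3$ already and rotations preserve degree. I would also need to rule out that a longer \emph{sequence} of rollings produces a net degree change of $+1$ at $v$ through some cancellation that does not match (ii) or (iii); this is handled by noting that all intermediate rollings occur along lines attached to a single apex, so their cumulative effect on a fixed vertex is controlled, and any net increase from $2$ to $3$ must be realized by a single decisive incidence-move of the type described. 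Assembling these cases, together with the observation that anything outside them leaves $v$ deficient in $H$, completes the proof.
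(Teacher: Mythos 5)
Your core analysis---rotation and line-replacement preserve all vertex degrees, balloon replacement produces exception (i), and rolling produces exceptions (ii) and (iii)---is exactly the paper's proof, which is a four-sentence version of this same operation-by-operation case analysis.

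However, the step that licenses this analysis rests on the wrong theorem. You invoke Theorem \ref{duck}, which requires $M$ to be \emph{vertically 3-connected} of rank at least $5$; the proposition assumes only that $M$ is connected, and your hedge that rank at least $5$ holds ``implicitly, as the interesting cases arise there'' is not a justification---the statement must hold for every connected $M$. Worse, the invocation is internally inconsistent with your own case analysis: Theorem \ref{duck} produces only rolling and rotation operations, with no replacement, precisely because a vertically 3-connected matroid has no non-trivial series classes and hence no balloons or non-trivial lines; yet your exception (i) concerns balloon replacement, which can only occur when $M$ fails to be vertically 3-connected. The theorem you actually need is Theorem \ref{goose} (the Coullard--del Greco--Wagner result for \emph{connected} matroids, which includes replacement operations), or its restatement Lemma \ref{duckduck} when its hypotheses hold. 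With that substitution, your argument matches the paper's; your additional care about cumulative effects of sequences of rollings and about why the type 1 hypothesis appears in (ii) and (iii) goes beyond what the paper writes down and is sound. (Both your proof and the paper's are silent about the exceptional family $\mathscr G$ excluded from Theorem \ref{goose}, so I do not count that against you.)
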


\begin{proof} 
A rotation operation does not change the number of edges incident to any vertex; neither does a replacement operation applied to a line. 
A replacement operation applied to a balloon may change the number of edges incident to a vertex just as described in (i). 
A rollup operation may change the number of edges incident to the apex vertex $u$ of a type 1 representation and a vertex of the line other than the neighbour of $u$ in the line.  
Thus a rollup may cause a vertex to become or cease being deficient just as described in (ii) and (iii). 
\end{proof}


Let us call a representation of a balloon \emph{standard} if 
its vertex of attachment has degree 1 and the neighbour of its vertex of attachment has degree 3. 
Call a type 1 graph \emph{substandard} if 
every balloon has the apex as its vertex of attachment and every loop is incident to the (same) apex. 
Let us call a graph representation $G$ for a bicircular matroid \emph{standard} if 
\begin{itemize} 
\item each of its balloons are standard, and in addition, 
\item if $G$ is type 1 then $G$ is obtained from a substandard type 1 graph by rolling exactly one line $L$ away from its apex, where among all lines with the apex as a vertex of attachment $L$ has the greatest number of edges. 
\end{itemize}
By Theorem \ref{goose}, if $G$ is a non-standard representation, then a standard representation may be obtained from $G$ via rolling and replacement operations. 
Observe that a standard representation for a matroid $M$ has, among all graphs representing $M$, the least number of deficient vertices, provided the degree of the apex is at least three. 

We need a couple 
{of} 
simple facts before proceeding. 

\begin{prop} \label{bababa} 
\addcontentsline{toc}{subsubsection}{$N$ del $e$ is not a circuit}
Let $N$ be an excluded minor for the class of bicircular matroids, with rank at least 6, and let $e \in E(N)$. 
Then neither $N \del e$ nor $N/e$ is a circuit. 
\end{prop}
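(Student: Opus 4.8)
The plan is to exploit that, since $N$ is an excluded minor, both $N \del e$ and $N/e$ are bicircular while $N$ itself is not; so it suffices to pin down the isomorphism type of $N$ under each hypothesis and then contradict either non-bicircularity or minor-minimality. Throughout I would invoke Lemma~\ref{Nvert3}: $N$ is vertically $3$-connected, hence connected, and — having rank $r=r(N)\geq 6\geq 3$ — free of loops, coloops, and non-trivial series classes. Consequently deleting $e$ leaves the rank unchanged ($e$ is not a coloop) and contracting $e$ drops it by exactly one ($e$ is not a loop), and in either case $|E|$ drops by one.

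\emph{Contraction case.} Suppose $N/e$ is a circuit. Then $N/e=U_{r-1,r}$ and $|E(N)|=r+1$, so $N$ has nullity one. I would pass to the dual: $N^{*}$ has rank one, is connected (as $N$ is), and is loopless (since $N$ is coloopless), whence $N^{*}=U_{1,r+1}$ and therefore $N=U_{r,r+1}$. Thus $N$ is itself a single circuit. But every such matroid is bicircular: $U_{r,r+1}=B(G)$ for a theta graph $G$ (two vertices joined by three internally disjoint paths whose lengths sum to $r+1$), which is a bicycle and hence a single circuit. This contradicts that $N$ is not bicircular.

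\emph{Deletion case.} Suppose $N \del e$ is a circuit. Then $N \del e=U_{r,r+1}$ and $|E(N)|=r+2$, so $N^{*}$ has rank two. Moreover $N^{*}$ is \emph{simple}: being coloopless, $N$ has a loopless dual, and having no non-trivial series class, $N$ has a dual with no parallel pair. A simple rank-$2$ matroid on $r+2$ elements is $U_{2,r+2}$, so $N=U_{r,r+2}$. The crucial sublemma is that $U_{s,s+2}$ is \emph{not} bicircular for $s\geq 5$. Indeed, any graph $G$ with $B(G)=U_{s,s+2}$ must be connected on exactly $s$ vertices with $s+2$ edges (rank equals the number of vertices). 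Since every $s$-element subset of edges must be a basis, no such subset may isolate a vertex; but if some vertex had degree at most $2$, deleting its (at most two) incident edges, padded to exactly two deletions, would leave an $s$-edge set with an isolated vertex, hence a dependent set. So $G$ has minimum degree $\geq 3$, forcing $2(s+2)=\sum_v \deg(v)\geq 3s$, i.e. $s\leq 4$ — a contradiction.

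Finally I would close the deletion case: since $r\geq 6$, contracting any single element of $N=U_{r,r+2}$ gives the \emph{proper} minor $U_{r-1,r+1}=U_{r-1,(r-1)+2}$ with $r-1\geq 5$, which by the sublemma is not bicircular. Hence $N$ has a proper non-bicircular minor, contradicting that it is an excluded minor. The hard part is precisely this degree-counting sublemma, which also explains the threshold: $U_{5,7}$ is itself an excluded minor (its only proper uniform minors, $U_{5,6}$ and $U_{4,6}$, are bicircular), so only from rank $\geq 6$ onward does $U_{r,r+2}$ acquire a smaller non-bicircular minor. Identifying $N$ exactly via the two short duality computations is the other ingredient, but that step is routine once the connectivity hypotheses from Lemma~\ref{Nvert3} are in hand.
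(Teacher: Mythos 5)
Your proof is correct, but it takes a genuinely different route from the paper's, most visibly in the deletion case. The paper never identifies $N$: it picks an element $s \neq e$, takes a bicircular representation $H$ of the proper minor $N/s$, observes that $H \del e$ is a bicycle on at least five vertices and hence has at least three vertices of degree two, so $H$ itself retains a degree-two vertex; this yields a $2$-cocircuit of $N/s$, hence one of $N$, contradicting vertical $3$-connectivity (Lemma \ref{Nvert3}). You instead use duality to pin down $N$ exactly --- $N^{*}$ is simple of rank two because $N$ has no coloops and no series pairs, so $N = U_{r,r+2}$ --- and then contradict minor-minimality rather than connectivity, via your self-contained sublemma that $U_{s,s+2}$ is non-bicircular for $s \geq 5$, which makes $N/e' = U_{r-1,r+1}$ a non-bicircular proper minor. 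Both arguments ultimately rest on the same degree count (a connected graph whose bicircular matroid has corank two cannot have minimum degree three once it has more than four vertices), but they deploy it to different ends: the paper's version is shorter and stays entirely within its toolkit of graph representations and connectivity of $N$ itself; yours buys an exact determination of $N$, a reusable fact about uniform matroids, and an explanation of the rank threshold (rank at least $6$ is exactly where $U_{r,r+2}$ acquires the smaller non-bicircular minor $U_{r-1,r+1}$, consistent with $U_{5,7}$ being an excluded minor), at the cost of dual arguments the paper avoids. Your contraction case is essentially the paper's: nullity one plus connectivity forces $N$ to be a circuit, which is bicircular.
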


\begin{proof}
Suppose to the contrary that $N \del e$ is a circuit. 
Choose an element $s \in E(N)$; $N \del e / s$ is a circuit. 
Let $H$ be a graph representing $N/s$. 
Since $N/s \del e$ is a circuit of rank at least 5, $H \del e$ is a bicycle with at least 5 vertices. 
Since $H$ has at least five vertices, this implies $H$ has a vertex of degree 2, and so that $N/s$ has a cocircuit of size 2. 
But this implies that $N$ has a cocircuit of size 2, contradicting the fact that 
{by Lemma \ref{Nvert3},}
$N$ is vertically 3-connected. 

Now suppose $N/e$ is a circuit. 
Then either $N$ is a circuit or $e$ is a coloop of $N$. 
But circuits are bicircular and $N$ is vertically 3-connected, so this is not possible. 
\end{proof}

The following fact follows immediately from Propositions \ref{elise} and \ref{bababa}, along with the fact that deleting an element from 
a vertically 3-connected matroid 
leaves the matroid connected, 
while contracting an element leaves the matroid connected,  aside from the possibility of creating loops when contracting an element contained in a non-trivial parallel class. 

\begin{prop} \label{useful} 
Let $N$ be an excluded minor for the class of bicircular matroids, with rank at least 6. 
Let $e \in E(N)$ and let $S \subseteq E(N)-\{e\}$. 
Let $G$ be a representation for $N \del e$ or for $N/e$. 
Then $S$ is a series class of $N \del e$ or $N/e$, respectively, if and only if $G[S]$ is a line or a balloon. 
\end{prop}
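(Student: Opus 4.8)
The plan is to obtain this as a direct application of Proposition \ref{elise} to each of the two matroids $N \del e$ and $N/e$, so that the work consists entirely of checking that these matroids satisfy the hypotheses of that proposition. Proposition \ref{elise} requires its matroid to be bicircular, to not be a circuit, and to be connected aside from loops; granting these, it asserts exactly that a set $S$ is a non-trivial series class precisely when $G[S]$ is a balloon or a line. Since by our standing convention the words \emph{balloon} and \emph{line} denote the non-trivial ones, this is exactly the equivalence claimed here, and so establishing the proposition amounts to confirming the hypotheses for $N \del e$ and for $N/e$.

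First I would verify the three hypotheses. Both $N \del e$ and $N/e$ are proper minors of the excluded minor $N$, so each is bicircular; this is what lets us speak of a representation $G$ at all. That neither matroid is a circuit is precisely Proposition \ref{bababa}, which applies because $N$ has rank at least $6$. For connectivity I would invoke Lemma \ref{Nvert3}, by which $N$ is vertically $3$-connected, together with the standard facts that deleting an element from a vertically $3$-connected matroid leaves it connected, and that contracting an element leaves it connected aside from loops. Thus $N \del e$ is connected and loopless, while $N/e$ is connected aside from loops. By Lemma \ref{Nvert3} every parallel class of $N$ has size at most $2$, so the only loops $N/e$ can acquire come from contracting $e$ inside a parallel pair; these isolated loops play no role in the correspondence, since a non-trivial series class never contains a loop.

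With the hypotheses in hand, applying Proposition \ref{elise} to $N \del e$ (with its representation $G$) yields the stated equivalence for $N \del e$, and applying it to $N/e$ yields the equivalence for $N/e$, completing the proof. I do not anticipate a genuine obstacle: the statement is a bookkeeping corollary assembled from results already in hand. The only point requiring any care is confirming that each matroid remains connected-aside-from-loops after the minor operation, which is exactly the content of the two connectivity facts quoted from the vertical $3$-connectivity of $N$, and separating off any loops created by contraction so that Proposition \ref{elise} may be applied cleanly.
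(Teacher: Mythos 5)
Your proposal is correct and follows exactly the paper's route: the paper derives this proposition as an immediate consequence of Propositions \ref{elise} and \ref{bababa} together with the same connectivity facts (deletion from a vertically 3-connected matroid preserves connectivity, and contraction preserves connectivity aside from loops arising from a parallel pair, which Lemma \ref{Nvert3} controls). Your verification of the hypotheses of Proposition \ref{elise}, including the handling of loops created by contraction, matches the paper's intended argument.
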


We may now show that deleting an element from an excluded minor leaves at most two non-trivial series classes, subject to the assumption that the deletion remains essentially 3-connected. 
In fact, we prove a stronger statement. 

\begin{lem} \label{junniper} 
Let $N$ be an excluded minor for the class of bicircular matroids, with rank at least 6. 
Let $e$ be an element of $N$, and assume $N \del e$ is essentially 3-connected. 
Let $G$ be a standard representation for $N \del e$.  
Then $G$ has at most two deficient vertices. 
\end{lem}

\begin{proof} 
Suppose to the contrary that $G$ has more than two deficient vertices. 
Each deficient vertex is contained in a line or balloon of $G$. 
By Proposition \ref{useful}, the lines and balloons of $G$ correspond precisely to the non-trivial series classes of $N \del e$. 
So $N \del e$ has at least one non-trivial series class. 
Let $S_1, \ldots, S_k$ be the non-trivial series classes of $N \del e$. 
Let $S = \bigcup_i S_i$. 
Each series class $S_i$ induces either a line or balloon in $G$, so $G$ has at least one deficient vertex in each induced subgraph $G[S_i]$, $i \in \{1,\ldots, k\}$. 

\begin{claim} 
$k \leq 3$. 
\end{claim}

\begin{proof}[Proof of Claim]
Suppose $k \geq 4$. 
Choose an element $s \in S_4$, and let $H$ be a graph representing $N/s$. 
The matroid $N \del e / s$ is essentially 3-connected and  represented by $G/s$ and by $H \del e$. 
Thus by 
Lemma \ref{duckduck},  
$H \del e = G/s$ up to rolling, rotation, 
replacement, 
{and rearrangement}.
Since $N/s$ has no non-trivial series class, by Proposition \ref{useful} $H$ has no line nor balloon. 
Thus deleting $e$ from $H$ leaves at most two deficient vertices. 
This implies that $N \del e /s$ has at most two non-trivial series classes. 
But $G$ has at least four lines or balloons 
$H[S_i]$, 
$i \in \{1,\ldots,4\}$, so $G/s$ has at least three lines or balloons. 
That is, $N \del e /s$ has at least three non-trivial series classes, a contradiction. 
\end{proof}

\begin{claim} \label{icabod}
If $k=3$, then each non-trivial series class of $N \del e$ has size 2. 
\end{claim}

\begin{proof}[Proof of Claim]
Suppose to the contrary that $|S_1| \geq 3$. 
Let $s \in S_1$.  
Let $H$ be a graph representing $N/s$. 
As in the previous paragraph, $N \del e / s$ remains essentially 3-connected and is represented by both $G/s$ and $H \del e$. 
Thus possibly after rolling, rotation, 
replacement, 
{or rearrangement,} 
$H \del e = G/s$. 
As in the previous paragraph, since $N/s$ has no non-trivial series classes, $H$ has no deficient vertices, and so $H \del e$ has at most two deficient vertices. 
Thus $N \del e /s$ has at most two non-trivial series classes. 
But $G/s$ has three lines or balloons, so $N \del e /s$ has three non-trivial series classes, so this is a contradiction. 
\end{proof}

\begin{claim} \label{gwen} 
$N \del e$ has an element that is not in $S$. 
\end{claim}

\begin{proof}[Proof of Claim]
Suppose to the contrary that every element of $N \del e$ is contained in a non-trivial series class. 
By Proposition \ref{bababa}, $N \del e$ is not a circuit, so $N \del e$ does not consist of just one non-trivial series class. 
Because $N \del e$ is connected, $N \del e$ cannot consist of exactly two non-trivial series classes. 
So $N \del e$ has exactly three non-trivial series classes. 
By the previous claim, each class contains just two elements. 
This implies $N$ is a matroid on 7 elements, and so has rank either 6 or 7. 
But a rank 6 matroid on 7 elements is a circuit, and a rank 7 matroid on 7 elements is free, and both of these are bicircular.
\end{proof}

So choose an element $f \in E(N)-S$. 
Since $N$ is vertically 3-connected and has no parallel class of size greater than two, $N/f$ is connected, possibly aside from a component consisting of a loop. 
Let $H$ be a graph representing $N/f$ (where, if $N/f$ has a component consisting of a loop, then $H$ has a component consisting of a single vertex with a single incident balanced loop). 
As previously, since $N/f$ has no non-trivial series class, $H$ has no deficient vertex. 
But both $G/f$ and $H \del e$ represent $N \del e /f$ so, 
up to rolling, rotation, replacement, 
{and rearrangement,}
$G/f = H \del e$ 
(since by 
{assumption} 
$H$ has a balanced loop incident to an otherwise isolated vertex if necessary, and if so, then $f$ must be an unbalanced loop in $G$, one of two loops incident to a common vertex, so that when $f$ is contracted in $G$, a single balanced loop incident to an otherwise isolated vertex results; apply 
{Lemma \ref{duckduck}} 
to the matroid and graphs obtained by deleting the loop of $N \del e /f$). 

As $H$ has no deficient vertices, $H \del e$ can have at most two deficient vertices. 
Thus $N \del e /f$ has at most two non-trivial series classes. 
More, since $f \notin S$, contracting $f$ in $G$ cannot reduce the number of deficient vertices. 

Because $G$ is standard, and $f \notin S$, $G/f$ is standard. 
Thus among all graphs representing $N \del e /f$, $G/f$ has the least number of deficient vertices. 
Thus, since $H \del e$ has at most two deficient vertices, so does $G/f$ have at most two deficient vertices. 
\end{proof}

Lemma \ref{deficientdan} follows immediately from Proposition \ref{elise}, and Lemma \ref{junniper}. 

\begin{lem}\label{deficientdan} 
\addcontentsline{toc}{subsubsection}{only a few deficient vertices}
Let $N$ be an excluded minor, of rank at least 6, for the class of bicircular matroids. 
Suppose $N \del e$ is essentially 3-connected with  representation $G$. 
\begin{enumerate}[label={\upshape (\roman*)}]
\item $N \del e$ has at most two non-trivial series classes. 
\item If $N \del e$ has just one non-trivial series class $S$, then 
\begin{itemize} 
\item $|S| \leq 4$, and 
\item if $G[S]$ is a line, then $|S| \leq 3$, unless $G$ is type 1 and $G[S]$ has an apex as one end. 
\end{itemize} 
\item If $N \del e$ has two non-trivial series classes, $S_i$ \textup{(}$i \in \{1,2\}$\textup{)}, then 
\begin{itemize} 
\item $|S_i| \leq 3$, and 
\item if $G[S_i]$ is a line, then $|S_i| = 2$, unless $G$ is type 1 and $G[S_i]$ has an apex as one end. 
\end{itemize} 
\end{enumerate}
\end{lem}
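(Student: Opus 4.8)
The plan is to convert the statement into a count of deficient vertices and then apply Lemma \ref{frankie}. By Proposition \ref{useful} the non-trivial series classes of $N\del e$ are exactly the edge sets inducing lines and balloons of $G$, and these induced subgraphs meet only in the shared attachment/end vertices; so it suffices to determine how many deficient vertices each line or balloon must contain and then to impose the bound of two deficient vertices that Lemma \ref{frankie} gives in a standard representation.

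The first step is the local count, where I would use that ``deficient'' means incident to exactly two edges, so that in particular the vertex where an edge meets a loop is deficient. A line with $k$ edges is an induced path with exactly $k-1$ interior vertices of degree two and two ends of degree at least three, hence contributes exactly $k-1$ deficient vertices. A balloon with $k$ edges contributes $k-1$ if it is a subdivided loop (a cycle through its attachment vertex) and $k-2$ if it is a lollipop whose cycle has length at least two, with the degenerate edge-and-loop balloon of size two still contributing its single deficient vertex; in all cases a non-trivial line or balloon has at least one deficient vertex, and a balloon of size $k$ has at least $k-2$.

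Granting these counts, parts (i) and (iii) come out by arithmetic against Lemma \ref{frankie} in a standard representation $G_0$ of $N\del e$. For (i), the series classes occupy disjoint deficient vertices and there are at most two of these, so there are at most two classes. For (iii), two classes share the budget of two deficient vertices, so each uses exactly one: a balloon using one deficient vertex satisfies $k-2\le 1$, i.e.\ $|S_i|\le 3$, and a line using one satisfies $k-1=1$, i.e.\ $|S_i|=2$. For the one-class case (ii) the same reasoning gives $k-2\le 2$ (so $|S|\le 4$) for a balloon and $k-1\le 2$ (so $|S|\le 3$) for a line.

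The genuinely delicate point --- and the reason (ii) records an exception rather than the bare bound $|S|\le 3$ for lines --- is that the line/balloon dichotomy and the deficient count are representation-dependent, so I must relate the given $G$ to the standard $G_0$ controlled by Lemma \ref{frankie}. By Proposition \ref{ernest} the type and the count are preserved under rotation and under replacement of lines, and the only way a set $S$ inducing a line in $G$ can induce something with a smaller deficient count in $G_0$ is via a rolling operation; this applies exactly when $G$ is type $1$ and $G[S]$ has the apex as an end, in which case rolling the apex-incident edge onto an interior vertex turns the line of size $k$ into a lollipop balloon, raising one interior degree-two vertex to degree three and dropping the count from $k-1$ to $k-2$. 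Lemma \ref{frankie} then permits $k=4$, which is precisely the stated exception; the apex hypothesis $\deg(\text{apex})\ge 3$ is what guarantees the apex does not itself become deficient and absorb the saving. Carefully verifying that apex-rolling is the only discrepancy between $G$ and $G_0$, and that it changes the count by exactly one, is the main obstacle; once this is in hand, away from the apex configuration the type and count in $G$ agree with those in $G_0$ and the arithmetic above applies directly.
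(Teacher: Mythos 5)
Your proposal is correct and is essentially the paper's own argument: the paper derives Lemma \ref{deficientdan} ``immediately'' from Proposition \ref{elise} (non-trivial series classes are exactly lines and balloons) together with Lemma \ref{frankie} (a standard representation has at most two deficient vertices), which is precisely your count-the-deficient-vertices-against-a-budget-of-two argument. Your extra care in relating the given representation $G$ to a standard one via Proposition \ref{ernest}, and in identifying apex-rolling as the sole source of discrepancy (whence the type-1 exception in (ii)), is exactly the content the paper leaves implicit in that one-line derivation.
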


Using Lemmas \ref{junniper} and \ref{deficientdan} we can now show that for an excluded minor $N$ of rank at least seven, if a graph representing $N \del e$ is type 1 or type 2, then its apex vertex is unique. 

\begin{prop} \label{pedro} 
Let $N$ be an excluded minor of rank at least seven. 
Assume $N \del e$ is essentially 3-connected and that $G$ is  a type 1 or type 2 representation for $N \del e$. 
Then $G$ has a unique apex vertex. 
The apex vertex of $G$ is not an internal vertex of a line or balloon of $G$. 
\end{prop}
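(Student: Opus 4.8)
The plan is to prove the two assertions of the proposition separately, in each case reducing to the structural bounds supplied by Lemmas \ref{frankie} and \ref{deficientdan}.

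\emph{Uniqueness of the apex.} Suppose for contradiction that $G$ has two apex vertices. By Proposition \ref{serabande}, $\co(G)$ is one of the graphs of Figure \ref{nonuniqueapex}, with apices $v_1$ and $v_2$. First I would determine the rank of such a graph. Since each $\co(G)-v_i$ is acyclic, every cycle of $\co(G)$ meets both $v_1$ and $v_2$; in particular $\co(G)$ has no loop, and no cycle avoids either apex. Because $\co(G)$ is already cosimplified it contains no nontrivial line or balloon, so every vertex outside $\{v_1,v_2\}$ would have degree at least three while lying on no cycle avoiding $v_1$ or $v_2$. A short case check shows this is impossible, forcing $V(\co(G))=\{v_1,v_2\}$, so $\co(G)$ is a bundle of parallel $v_1$-$v_2$ edges and $r(\co(N\del e))=r(B(\co(G)))=2$. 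On the other hand $\co(G)$ represents $\co(N\del e)$, and since $N\del e$ is connected it has no coloops; thus cosimplifying it only contracts the $|S_i|-1$ superfluous elements of each nontrivial series class $S_i$, each contraction dropping the rank by one. Hence $r(\co(N\del e))=r(N\del e)-\sum_i(|S_i|-1)$. By Lemma \ref{deficientdan} there are at most two such classes of bounded size, whence $\sum_i(|S_i|-1)\le 4$; as $r(N\del e)=r(N)\ge 7$, this gives $r(\co(N\del e))\ge 3$, contradicting $r(\co(N\del e))=2$. Therefore $G$ has a unique apex.

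\emph{The apex is not internal.} Suppose the apex $v$ is an interior vertex of a line or balloon; then $v$ is deficient, i.e.\ $\deg_G(v)=2$. If $G$ is type 1, then $G-v$ is acyclic, so every cycle of $G$ passes through $v$ and uses both of its edges; since $G-v$ is a forest there is at most one path joining the two neighbours of $v$, so $G$ has at most one cycle. A connected graph with at most one cycle contains no bicycle, so $B(G)=N\del e$ would be free, impossible for an essentially 3-connected matroid of rank at least seven. If instead $G$ is type 2, then $v$ is the second end of the rotation line $L_1$, and the edge $e_1$ of $L_1$ at $v$ lies in the nontrivial series class through $v$. Were $v$ internal to a line or balloon $Y$, then $e_1\in Y$; since by Proposition \ref{elise} each edge lies in a unique nontrivial series class, $Y$ is the maximal line or balloon containing $e_1$, and $v$ cannot be simultaneously the designated end of $L_1$ at $e_1$ and a degree-two interior vertex of $Y$. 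The only delicate point is when $L_1$ is a single rotation edge: there the degree-two condition at $v$ forces $e_1$ to extend through $v$ into a strictly longer line, contradicting the maximality that makes $v$ its endpoint. In every case we reach a contradiction, so the apex is not internal to any line or balloon.

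I expect the main obstacle to be the rank computation for the graphs of Figure \ref{nonuniqueapex}: one must verify with care that in cosimplified form a graph carrying two apices can only be a bundle of parallel edges of rank exactly two, since it is this sharp bound, combined with the reduction $\sum_i(|S_i|-1)\le 4$ coming from Lemma \ref{deficientdan}, that produces the decisive inequality $2=r(\co(N\del e))\ge 3$. (Should a higher-rank two-apex graph appear in the figure, it must have rank exactly three, which forces the reduction to equal four, i.e.\ two balloons of size three, and this rigid configuration can be ruled out directly.) The type-2 portion of the second assertion, and in particular the single-rotation-edge degeneracy, is the other place demanding attention, where the uniqueness of series classes from Proposition \ref{elise} is what prevents the apex from being a degree-two interior vertex.
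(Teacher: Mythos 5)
Your opening move (invoking Proposition \ref{serabande} to say $\co(G)$ is one of the graphs of Figure \ref{nonuniqueapex}) matches the paper, but the core of your uniqueness argument contains a genuine gap: the deduction that $\co(G)$ must be a bundle of parallel $v_1$-$v_2$ edges of rank $2$ rests on the premise that ``each $\co(G)-v_i$ is acyclic.'' That premise is precisely the property of a \emph{type 1} apex. Proposition \ref{serabande} covers type 2 representations as well, and the apex $v$ of a type 2 (hence not type 1) graph does \emph{not} satisfy it: the two rotation lines $L_2,L_3$ through the rotation vertex $u$ and their common end $w$ form a cycle avoiding $v$, so $\co(G)-v$ contains a cycle. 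Consequently your ``short case check'' forcing $V(\co(G))=\{v_1,v_2\}$ is only valid for type 1 two-apex graphs; the figure also contains type 2 two-apex graphs on more than two vertices, and for these your conclusion $r(B(\co(G)))=2$ is simply false. Your parenthetical fallback does not save the argument either: two-apex type 2 graphs can have four vertices, hence rank $4$, so your inequality $r(\co(N\del e))\geq r(N\del e)-\sum_i(|S_i|-1)\geq 7-4=3$ produces no contradiction at all in that case (indeed with a four-vertex $\co(G)$ the bound $\sum_i(|S_i|-1)\le 4$ from Lemma \ref{deficientdan} only yields $r(N\del e)\le 8$, which is compatible with rank at least seven).

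This is exactly why the paper splits the argument by type. For type 1 your count agrees with the paper's: $\co(G)$ has two vertices and Lemma \ref{deficientdan} bounds the subdivision vertices, giving $|V(G)|\le 6<7$. For type 2, however, the series-class \emph{size} bounds are too weak, and one must instead use the sharper conclusion of Lemma \ref{frankie}: a standard representation has at most two deficient vertices. Since every internal vertex of a line or balloon is deficient, cosimplification removes at most two vertices, so $|V(G)|\le |V(\co(G))|+2\le 6$, contradicting $|V(G)|=r(N\del e)\geq 7$. You cite Lemma \ref{frankie} in your preamble but never actually use it, and without it the type 2 case is open. As a smaller point, your treatment of the second assertion is fine for type 1 (where you in fact justify the paper's ``clearly'' by showing a degree-two apex would force $B(G)$ to be free), but your type 2 paragraph largely restates the desired conclusion --- asserting that $v$ ``cannot be simultaneously'' an end of $L_1$ and an interior vertex of $Y$ is the claim to be proved, and Proposition \ref{elise} by itself does not rule it out.
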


\begin{proof} 
Suppose to the contrary that $G$ is type 1 or type 2 and does not have a unique apex. 
Then $G$ has the structure described in Proposition \ref{serabande}, and has at least seven vertices. 
By Lemma \ref{deficientdan}, $N \del e$ has at most two non-trivial series classes. 

Suppose first $G$ is type 1. 
If $N \del e$ has just one non-trivial series class, then it has size at most four. 
But this implies $G$ has at most five vertices, a contradiction. 
If $N \del e$ has two non-trivial series classes, then each has size two or three. 
But this implies $G$ has at most six vertices, a contradiction. 

Now suppose $G$ is type 2. 
Then $G$ is a standard representation of $N \del e$, and so has at most two deficient vertices, by Lemma \ref{junniper}. 
But this implies $G$ has at most six vertices, a contradiction. 

Let $v$ be the apex of $G$. 
Clearly, $\deg_G(v)$ must be at least three, so $v$ cannot be an internal vertex of a line or balloon of $G$. 
\end{proof}

\subsection{Repairing connectivity} 

Let $N$ be an excluded minor of rank at least seven. 
The 
goal of this section is to find an element $e \in E(N)$ for which $N \del e$ is not only essentially 3-connected, but also represented by a 2-connected graph. 

\begin{lem} \label{B5} 
\addcontentsline{toc}{subsubsection}{Repairing lines \& balloons with a deletion}
Let $N$ be an excluded minor for the class of bicircular matroids, of rank at least seven. 
Assume $N$ has an element $e$ so that $N \del e$ is essentially 3-connected, and let $G$ be a graph representing $N \del e$. 
Assume $G$ is not 2-connected, but $G$ has an edge $f$ not contained in a line or balloon such that $\co(G) \del f$ is 2-connected. 
Let $H$ be a graph representing $N \del f$ 
{and assume $G \del f$ and $H \del e$ have compatible vertex labelings.}
Then 
\begin{enumerate}[label={\upshape (\roman*)}]
\item  $G \del f = H \del e$ up to rolling, rotation, and replacement, 
\item 
each line or balloon of $G$ has an internal vertex that is an end of $e$ in $H$, and 
no line or balloon of $H$ is contained in a line or balloon of $G$, 
\item $N \del f$ is essentially 3-connected and $H$ is 2-connected up to rolling, and, 
\item no balloon of $G$ has more than 3 edges. 
\end{enumerate} 
\end{lem}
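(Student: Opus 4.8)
The plan is to work inside the common minor $N\del e\del f$, use it to rigidify the two representations, and then read off the locations of $e$ and $f$. First I would show $N\del e\del f$ is essentially 3-connected. Its cosimplification is represented by $\co(G\del f)$, so by Lemma \ref{bobby} it suffices to check that $\co(G\del f)$ is 2-connected of minimum degree 3, since such a graph admits no essential 2-separation. By hypothesis $\co(G)\del f$ is 2-connected; as $f$ lies in no line or balloon it survives in $\co(G)$, and deleting it can only lower the two ends of $f$ to degree 2, creating new lines but no new balloon (a balloon needs a cut vertex, which $\co(G)\del f$ lacks). Forming $\co(G\del f)$ merely suppresses these degree-2 vertices, which preserves 2-connectivity and restores minimum degree 3. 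Since $N$ is vertically 3-connected it is connected with no coloop, and $N\del e$ is likewise connected, so neither $e$ nor $f$ is a coloop and $N\del e\del f$ has rank at least $7-2=5$. Lemma \ref{duckduck} then applies to the two representations $G\del f$ and $H\del e$ of $N\del e\del f$, which gives (i).

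For (ii), note that every line or balloon of $G$ avoids $f$, hence is a series class of $N\del e\del f$ and appears, via (i), as a line or balloon of $G\del f = H\del e$. To place $e$, I would use that $N$, being vertically 3-connected, has no non-trivial series class: for a series class $S$ of $N\del e$, each 2-cocircuit $\{a,b\}\subseteq S$ of $N\del e$ can arise only from a 3-cocircuit $\{a,b,e\}$ of $N$, so that restoring $e$ breaks every 2-cocircuit inside $S$. Transporting this across the equivalence of (i), $e$ must meet the interior in $H$ of the image of each such $S$, which is the first assertion of (ii). The second assertion, that no line or balloon of $H$ sits inside a line or balloon of $G$, I would obtain from the symmetric bookkeeping applied to $N\del f$, using that $f$ lies in no series class of $N\del e$.

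For the essential 3-connectivity of $N\del f$ in (iii) I would argue by contradiction. An essential 2-separation $(A,B)$ of $N\del f$ with $e\in A$ forces, by Proposition \ref{cam}(ii) applied to $(N\del f)\del e = N\del e\del f$, that $A-e$ induces a balloon $B_A$ of $H$. Reading $(A,B)$ through Lemma \ref{bobby}, the bicycle on the $A$-side must be formed by the cycle of $B_A$ together with $e$; deleting $e$ returns $B_A$ as a line or balloon of $G\del f$, and tracing this back through (i) to $G$ contradicts either $f\notin$ line or balloon or the 2-connectivity of $\co(G)\del f$. Once $N\del f$ is essentially 3-connected, its representation $H$ has no cut vertex with cycles on both sides, so the only obstructions to $H$ being 2-connected are balloons and a type-1 apex; by (ii) the edge $e$ attaches across each balloon of $G$, leaving $H$ with no balloon, so after the standard rolling that handles a possible type-1 apex, $H$ is 2-connected, which is the remaining assertion of (iii).

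Finally, for (iv), a balloon of $G$ with at least four edges would, by Lemma \ref{deficientdan}, have to be the unique non-trivial series class of $N\del e$; by (ii), $e$ splits this balloon in $H$, and applying Lemma \ref{deficientdan} to the now essentially 3-connected $N\del f$ bounds the sizes of the resulting series classes and contradicts the size-4 balloon. The step I expect to be the main obstacle is the contradiction in (iii), together with the closely related placement of $e$ in (ii): both require tracking a specific balloon and the edges $e,f$ across the rolling--rotation--replacement equivalence of (i), where exact vertex incidences are not preserved, while simultaneously invoking the structural descriptions of Lemma \ref{bobby} and Proposition \ref{cam}.
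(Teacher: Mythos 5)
Your part (i) coincides with the paper's argument (show $N \del \{e,f\}$ is essentially 3-connected, then apply Lemma \ref{duckduck}), and your more detailed justification of that step is fine. The genuine gap is in (ii), and it propagates into (iii) and (iv). What must be shown is that no subset $R$ of the union $S$ of lines and balloons of $G$ is a non-trivial series class of $N \del f$. Your cocircuit bookkeeping shows only that a series pair $\{a,b\} \subseteq S$ of $N \del e$ arises from a 3-cocircuit $\{a,b,e\}$ of $N$, i.e.\ that it does not survive in $N$; but it can reappear in $N \del f$, since $\{a,b\}$ is a cocircuit of $N \del f$ whenever $\{a,b,f\}$ is a cocircuit of $N$, and nothing in your argument excludes such 3-cocircuits through $f$. ``Transporting across the equivalence of (i)'' cannot close this: (i) only identifies $H \del e$ with $G \del f$ and says nothing about where $e$ attaches in $H$, which is exactly what is being proved. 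The tell is that your placement argument never uses the hypotheses that $f$ lies in no line or balloon and that $\co(G) \del f$ is 2-connected. The paper's proof uses precisely these: if $R \subseteq S$ were a series class of $N \del f$, then $(Q,R)$ with $Q = E(H)-R$ is a vertical 2-separation of $N \del f$, the position of $f$ in $G$ gives $f \in \cl_N(Q)$, and hence $(Q \cup f, R)$ is a vertical 2-separation of $N$, contradicting Lemma \ref{Nvert3}. This ``extend the separation back to $N$'' step is the engine of the entire proof and is absent from your proposal.

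The same omission breaks your (iii) and (iv). In (iii) you claim that tracing the balloon $A-e$ back through (i) contradicts either the hypothesis that $f$ is in no line or balloon or the 2-connectivity of $\co(G) \del f$; it does not, because the traced-back set is simply a balloon of $G$, and $G$ genuinely has balloons (it is not 2-connected by hypothesis). The paper's contradiction in that case is again matroidal: in that configuration $f \in \cl_N(B)$, so $(A, B \cup f)$ is a vertical 2-separation of $N$. Similarly, in (iv) Lemma \ref{deficientdan} yields no contradiction on its own: splitting a 4-edge balloon with a single end of $e$ leaves two 2-edge lines, i.e.\ two non-trivial series classes of size 2, which is exactly the configuration Lemma \ref{deficientdan}(iii) permits. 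The contradiction must instead come either from the full strength of the claim in (ii) (no line or balloon of $H$ is contained in a line or balloon of $G$), or, as in the paper, from observing that $e$ then has both ends in $V_H(S)$, so $e \in \cl_N(S)$ and the vertical 2-separation $(E(G)-S, S)$ of $N \del e$ extends to one of $N$, contradicting Lemma \ref{Nvert3}.
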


\begin{proof} 
Since $G$ is not 2-connected but $N \del e$ is essentially 3-connected, $G$ has a balloon. 
{Thus $G$ is not in $\mathscr G$. 
Since $f$ is not contained in this balloon, neither is $G \del f$ in $\mathscr G$. 
Both $G \del f$ and $H \del e$ represent $N \del \{e,f\}$, so by 
Theorem \ref{goose}, statement (i) holds.} 
By Proposition \ref{useful} and 
{Lemma} 
\ref{deficientdan}, $G$ has at most one other balloon or line. 
Let $S$ be the set of edges contained in a balloon or line of $G$. 
{Then $S$ consists of at most two non-trivial series classes of $N \del e$, and because we choose $f$ not contained in a line or balloon of $G$, $f \notin S$.} 

\begin{claim} 
No subset $R$ of $S$ is a non-trivial series class of $N \del f$. 
\end{claim}

\begin{proof}[Proof of Claim]
Suppose there is such a subset $R \subseteq S$. 
{Then $R$ is a set of elements in series in $N \del e$, as well as in $N \del f$.}
By Proposition \ref{useful}, $H[R]$ is a balloon or line. 
Write $Q = E(H)-R$. 
Then $e \in Q$ and $(Q,R)$ is either a proper 1-separation of $H$ where $R$ is a balloon or a proper 2-separation of $H$ where $R$ is a line.
In either case, $(Q,R)$ is a vertical 2-separation of $N \del f$. 
{We now show that there is a circuit of $N$ contained in $Q \cup \{f\} - \{e\}$ that contains $f$. For suppose not. Then every circuit of $N \del e$ containing $f$ contains an element of $R$, and so contains every element of $R$. The matroid $N \del e$ is essentially 3-connected, so such a circuit exists. Thus $f$ is contained in the series class of $N \del e$ containing $R$. But then by Proposition \ref{useful}, $f$ is contained in a line or balloon of $G$, contrary to our choice of $f$. Thus there is a circuit contained in $Q \cup \{f\}$ containing $f$. Hence $f \in \cl_N(Q)$.}
But this implies that $(Q \cup f, R)$ is a vertical 2-separation of $N$, contrary to Lemma \ref{Nvert3}.
\end{proof}

By the claim neither $S$, nor any subset of $S$, is a balloon or line in $H$.  
Since $H \del e = G \del f$ up to rolling, rotation, and replacement, this implies statement (ii): if $S'$ is a line or balloon of $G$, then in $H$ edge $e$ has an internal vertex of $S'$ as an end, and no line or balloon of $H$ is contained in a line of balloon of $G$. 

Now suppose $H$ has a proper 1-separation $(A,B)$. 
Since $\co(G) \del f$ is 2-connected, and $G \del f = H \del e$ up to rolling, rotation, and replacement, 
up to relabelling the sides of the separation, either 
\begin{enumerate} 
\item $V(A) \cap V(B)$ is the vertex of attachment of a balloon $S$ in $G$, $A = S \cup e$, and $e$ has both ends in $V_G(S)$, or 
\item 
$H \del e$ is a type 1 representation for $N \del \{e,f\}$, 
$G$ has a balloon $S$, 
$G \del f$ is type 1 with a line $A$, and 
$H$ is obtained from $G$ by unrolling $S$ to its apex, 
adding $e$ as an edge 
with at least one end an internal vertex of $G[S]$, 
and rolling $A$ away from the apex.  
\end{enumerate}
In the first case, since $f \in \cl_N(B)$, the 2-separation $(A,B)$ of $N \del f$ extends to a 2-separation $(A, B \cup f)$ of $N$, a contradiction. 
Thus the second case holds; that is, statement (iii) holds. 

Finally, suppose a balloon $S$ in $G$ has more than three edges, and suppose $G[S]$ has vertex of attachment $x$. 
Then $G[S]$ has at least two deficient vertices. 
But by Lemma \ref{junniper}, $G$ has at most two deficient vertices after possibly applying a replacement operation, so $|S|=4$. 
Let $A=E(G)-S$; $(A,S)$ is a proper 1-separation of $G$ and a vertical 2-separation of $N \del e$. 
Since no subset of $S$ forms a line or balloon in $H$, in $H$ edge $e$ has both ends in $V_H(S)$. 
But this implies that $e \in \cl_N(S)$, so the vertical 2-separation $(A, S)$ of $N \del e$ extends to a vertical 2-separation $(A, S \cup e)$ of $N$, contradicting the fact that $N$ is vertically 3-connected. 
\end{proof}

We describe outcome (ii) of Lemma \ref{B5} by saying that the edge $e$ in $H$ \emph{repairs} a line or balloon of $G$. 

\begin{lem} \label{eggbert} 
\addcontentsline{toc}{subsubsection}{each series class a line}
Let $N$ be an excluded minor for the class of bicircular matroids, with rank at least seven. 
If $N$ has an element $e$ such that $N \del e$ is essentially 3-connected and type 3, 
then $N$ has an element $e'$ such that $N \del e'$ is essentially 3-connected, type 3, and represented by a 2-connected graph.
\end{lem}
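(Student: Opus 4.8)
The plan is to eliminate the balloons that obstruct $2$-connectivity by passing to a different single-element deletion and invoking the repair mechanism of Lemma~\ref{B5}. Fix a standard representation $G$ of $N\del e$. If $G$ is $2$-connected we are done with $e'=e$, so suppose not. Since $N\del e$ is essentially $3$-connected, any proper $1$-separation of $G$ is a non-essential vertical $2$-separation of the matroid; one of its two sides is therefore an all-series class, and because it attaches at the single cut vertex it must be a balloon (the all-parallel alternative, loops at one vertex, has no private vertex and so gives no proper separation; a line attaches at two vertices). By Proposition~\ref{useful} and Lemma~\ref{deficientdan}, $G$ thus has at least one and at most two balloons.

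Next I would select the edge to delete. Passing to $\co(G)$, which represents the vertically $3$-connected matroid $\co(N\del e)$, its non-loop core is $2$-connected of minimum degree $3$. I want a genuine core edge $f$, lying in no line and no balloon, for which $\co(G)\del f$ is $2$-connected: if the core is $3$-connected any core edge works, and otherwise I apply Proposition~\ref{xavier} to a proper $2$-separation of the core with minimal small side. Since there are at most two lines or balloons (Lemma~\ref{deficientdan}) while the core carries many more edges, $f$ can be chosen outside every line and balloon. Lemma~\ref{B5} now yields that $N\del f$ is essentially $3$-connected, that its representation $H$ is $2$-connected up to rolling, and that the edge $e$ repairs the balloon(s) of $G$: its two ends land on internal vertices of the (at most two) balloons, so no balloon of $G$ survives in $H$.

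It remains to promote ``$2$-connected up to rolling'' to genuine $2$-connectivity while keeping type~$3$, and this is the crux. A type-$3$ matroid has no apex and hence admits no rolling (Lemma~\ref{duckduck}), so it suffices to show $N\del f$ is type~$3$; then $H$ is literally $2$-connected and $e'=f$ finishes the proof. Inspecting the proof of Lemma~\ref{B5}(iii), the only obstruction to $H$ being $2$-connected is its ``case~$2$'', which asserts in particular that the graph $G\del f$ is type~$1$; so I must exclude $G\del f$ from having an apex, and more generally argue that $N\del\{e,f\}$ is type~$3$ (after which deleting $e$ from any apex or rotation representation of $N\del f$ would produce one of $N\del\{e,f\}$, contradicting type~$3$ via Theorem~\ref{goose}). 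The main obstacle lies exactly here: deleting an edge from a type-$3$ graph can create an apex --- deleting a rim edge of a wheel is the model example --- so the choice of the core edge $f$ and the location of the balloon must be controlled. I expect to rule out case~$2$ by combining the density of the core (minimum degree $3$) with the fact that $N$ is vertically $3$-connected: if $G\del f$ acquired an apex, the balloon's series class together with $f$ would yield a vertical $2$-separation that extends to $N$ itself, contradicting Lemma~\ref{Nvert3}. This pins down $N\del f$ as type~$3$, completing the argument.
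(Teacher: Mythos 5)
Your first half tracks the paper's own strategy: pass to the graph with series classes suppressed, find an edge $f$ lying in no line or balloon whose deletion preserves $2$-connectivity, and invoke Lemma~\ref{B5}. The fatal problem is the step you yourself call the crux. Your argument that $N\del f$ must be type~3 rests on the claim that if $G\del f$ acquired an apex, then ``the balloon's series class together with $f$ would yield a vertical $2$-separation that extends to $N$,'' contradicting Lemma~\ref{Nvert3}. No such separation exists: being type~1 or type~2 is not a connectivity defect, and vertical $3$-connectivity of $N$ places no restriction on the type of $N\del\{e,f\}$. Your own model example refutes the mechanism you propose --- deleting a rim edge of a wheel-like type~3 graph creates an apex while the graph stays $3$-connected --- so connectivity of $N$ cannot exclude this outcome, and nothing in your choice of $f$ controls it. (Note also that even if an apex were excluded, you would still have to rule out the rotation structure of type~2, which your mechanism does not address.) The paper does not rule this case out; it accepts it and does substantial further work: when the representation $H$ of $N\del f$ is type~1 or~2, it has a unique apex $v$ not internal to any line or balloon (Proposition~\ref{pedro}); one then shows $G$ has only one balloon, finds a further edge $f''$ incident to $v$ with $H\del f''$ still $2$-connected, and proves that this \emph{third} deletion $N\del f''$ is the desired element, by comparing the two representations $K\del e = G\del f''$ and $K \del f = H\del f''$ of its minors. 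The correct resolution is to pivot to a different element, not to argue the bad case cannot occur; as written, your proof cannot be completed.

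There is also a secondary gap in your selection of $f$. Proposition~\ref{xavier} produces one edge in the minimal side $B$ of some proper $2$-separation; it does not let you pick freely among ``many more edges.'' In the graph $G'$ obtained by suppressing the series classes, the representatives $s_1$ (a loop, from the balloon) and $s_2$ (from the second line or balloon, if present) may be forced into opposite sides of every proper $2$-separation, in which case the minimal side necessarily contains one of them and the edge Proposition~\ref{xavier} guarantees may be exactly $s_1$ or $s_2$ --- edges that Lemma~\ref{B5} forbids you to use. This is precisely why the paper's proof of this lemma contains a long separate claim devoted to finding an edge $e'\notin\{s_1,s_2\}$ whose deletion keeps $G'$ $2$-connected; counting edges outside the lines and balloons is not a substitute for that argument.
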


\begin{proof} 
Let $e \in E(N)$, assume $N \del e$ is essentially 3-connected, and let $G$ be a type 3 graph representing $N \del e$. 
If $G$ is 2-connected we are done, so assume not. 
Since $N \del e$ is essentially 3-connected, $G$ has a balloon $S_1$ that is a series class of $N \del e$. 
By Lemma \ref{deficientdan}, $N \del e$ has at most two non-trivial series classes. 
If it exists, let $S_2$ be the second non-trivial series class of $N \del e$; otherwise, let $S_2 = \emptyset$. 
Let $s_1 \in S_1$ and, if $S_2$ is non-empty, let $s_2 \in S_2$, and let $G'$ be the graph obtained by contracting all elements in $S_1$ and $S_2$ other than $s_1$ and $s_2$ 
Then $G'$ is 2-connected with minimum degree at least 3. 
Note that $s_1$ is a loop in $G'$. 

\begin{claim} \label{frank} 
$G'$ has an edge $e'$ distinct from $s_1$ and, if 
{$S_2$ is non-empty then distinct from each of $s_1$ and $s_2$},
such that $G' \del e'$ is 2-connected. 
\end{claim}

\begin{proof}[Proof of claim]
If $G'$ is 3-connected, then choose any edge $e' \in E(G')$ other than $s_1$ or $s_2$: $G' \del e'$ is 2-connected, so the claim holds. 
So assume $G'$ is not 3-connected. 
If $S_2$ is empty, or if there is a proper 2-separation of $G'$ with both $s_1$ and $s_2$ in the same side, then let $(A,B)$ be a proper 2-separation of $G$ with $s_1$ in $A$ and, if $S_2$ is non-empty, both $s_1, s_2$ in $A$, and with $B$ minimal. 
By Proposition \ref{xavier}, there is an edge $e' \in B$ such that $G' \del e'$ remains 2-connected. 

So now assume $S_2$ is non-empty and every proper 2-separation of $G'$ has $s_1$ and $s_2$ in different sides. 
Then 
for any edge $f \in E(G')$, if $G' \del f$ has a proper 1-separation, then $s_1, s_2$ are in opposite sides of this separation. 
Thus to check that $G' \del f$ remains 2-connected, we just need check that there remains a pair of internally disjoint paths in $G' \del f$ linking the end of $s_1$ and the ends of $s_2$. 

{
Let $(A,B)$ be a 2-separation of $G'$ with $s_1 \in A$ and $A$ minimal, and let $(C,D)$ be a 2-separation of $G'$ with $s_2 \in D$ and $D$ minimal. 
By Proposition \ref{xavier} and its proof, there is an edge $e'$ as required, unless $|V(A)-V(B)| = 1 = |V(D)-V(C)|$, and each of $A$ and $D$ have just three edges, and each of $s_1$ and $s_2$ are loops (the case shown in the graph at left in Figure \ref{manny1}). 
So assume that this is the case (Figure \ref{Findingeprimegood}). 
\begin{figure}[tbp] 
\begin{center} 
\includegraphics[scale=0.95]{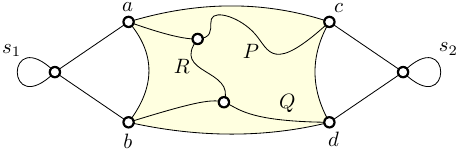}
\end{center} 
\caption{{Finding $e'$ when every 2-separation of $G'$ has $s_1$ and $s_2$ in different sides.}}
\label{Findingeprimegood} 
\end{figure}   
Because $G'$ is 2-connected, there is a pair of disjoint paths $P, Q$ linking $V(A) \cap V(B)$ and $V(C) \cap V(D)$. 
There is a path $R$ linking a vertex in $P$ and a vertex in $Q$, as otherwise $G'$ would have a proper 2-separation with $s_1$ and $s_2$ on the same side, contrary to assumption. 
(Such a 2-separation would exist even in the case that both $P$ and $Q$ are trivial, where $a=c$ and $b=d$, though in this case the non-existence of path $R$ would also imply that $N \del e$ has more than two non-trivial series classes, contradicting Lemma \ref{deficientdan}.) 
Let $e'$ be an edge in $R$. 
The paths $P$ and $Q$ remain in $G' \del e'$, so $G' \del e'$ remains 2-connected. }
\end{proof}

{Let} 
$e' \in E(G') - \{s_1, s_2\}$ 
(respectively, let $e' \in E(G') - \{s_1\}$ if $S_2$ is empty) 
be an edge 
{whose existence is guaranteed by the preceding claim.} 
{Then} 
$G' \del e'$ remains 2-connected. Let $H$ be a graph representing $N \del e'$ 
{and assume that $V(G)$ and $V(H)$ are labelled so that $H \del e$ and $G \del e'$ have compatible vertex labelings}. 
By Lemma \ref{B5}, $H$ is 2-connected up to a rolling operation. 
Thus $N \del e'$ is essentially 3-connected. 
If $H$ is type 3 we are done. 

So assume $H$ is a type 1 or type 2 graph representation for $N \del e'$. 
We may assume $H$ is a substandard representation for $N \del e'$ (that is, if $H$ is type 1, then $H$ does not have a balloon that may be rolled up to its apex), so $H$ is 2-connected. 
By Lemma \ref{pedro} $H$ has a unique apex vertex $v$. 
Observe that $N \del \{e,e'\}$ is essentially 3-connected. 
Since 
{$N \del \{e,e'\}$ is represented by both $H \del e$ and $G \del e'$, and $G \del e'$ is not in $\Ggg$ (because $G \del e'$ has a balloon),}
$H \del e = G \del e'$ up to rolling, rotation, and replacement. 
{Thus}
$G \del e'$ is type 1 or 2 with apex $v$. 
By Proposition \ref{pedro}, 
$v$ is not an internal vertex of a balloon or line in $H$. 

\begin{claim} 
Other than its balloon $S_1$, $G$ has no other balloon nor a line. 
\end{claim}

\begin{proof}[Proof of Claim]
Suppose that $G$ has a second line or balloon $S_2$. 
By Lemma \ref{B5}, $e$ repairs both of $S_1$ and $S_2$ in $H$. 
Then not both $S_1$ and $S_2$ are balloons with $v$ as their vertex of attachment (if so, $H$ would have $v$ as a cut-vertex, but $H$ is 2-connected). 
Since $v$ is not an internal vertex of a balloon or a line, this implies that $H$ is type 3, a contradiction. 
\end{proof}

\begin{claim} 
$H$ has an edge $f \neq e$, incident to $v$, such that $H \del f$ remains 2-connected. 
\end{claim}

\begin{proof}[Proof of Claim] 
Since $H \del e = G \del e'$ up to rolling, rotation, and replacement, $V_G(S_1) \cup \{v\} = V_H({S_1}) \cup \{v\}$; let us denote this set by $V_{S_1}$, and by $V_{S_1}^-$ the set of vertices obtained by removing the vertex of attachment of ${S_1}$ from $V_{S_1}$. 
Now because $H$ is type 1 or 2 and 2-connected, both ends of $e$ in $H$ are in $V_{S_1}$. 
By Lemma \ref{B5}, $|{S_1}| \leq 3$, so $|V_{S_1}^-| \leq 3$. 

Since $G$ is type 3, but $G \del e'$ is not, $v$ is not an end of $e'$ in $G$. 
Thus the fact that $G \del e' = H \del e$ up to rolling, rotation, and replacement, 
{and that $e$ repairs $S_1$,}
implies that the only vertices of $H$ that are possibly of degree less than three are the ends of $e'$ in $G$. 
Let us denote by $T$ the tree $(G-V_{S_1}^-) \del e' = H-V_{S_1}^-$. 
Since $T$ 
has at least four vertices, of which at most two are of degree 2 in $H$, connectivity now implies that there is either a leaf of $T$ with at least two incident edges whose other end is $v$, or there is a non-leaf of $T$ with an incident edge whose other end is $v$, or the vertex of attachment of ${S_1}$ is a leaf of $T$ and so must be an end of an edge whose other end is $v$. 
Choose such an edge $f$: 
$H \del f$ remains 2-connected.  
\end{proof}

Let $K$ be a graph representing $N \del f$, 
{with compatible vertex labelling}. 
Because $H \del f$ is 2-connected, $N \del \{e',f\}$ is essentially 3-connected, so $N \del f$ is essentially 3-connected. 
By our choice of $f$, $G \del f$ remains type 3, 
{and as $G \del f$ has a balloon, $G \del f$ is not in $\Ggg$.} 
Therefore, up to rolling, rotation, and replacement, 
\begin{itemize} 
\item $K \del e = G \del f$, so $K$ is type 3 and $e'$ has the same ends in $K$ as in $G$, and 
\item $K \del e' = H \del f$; 
the proof of statement (iii) of Lemma \ref{B5} shows that $e$ must have the same ends in $K$ as in $H$. 
\end{itemize} 
Thus $K$ is a 2-connected type 3 representation for $N \del f$. 
\end{proof}

\section{Finding $X$} \label{findingX}

Given an excluded minor $N$, of rank at least 
{ten}, 
the goal of this section is to find a triple of elements $X = \{a,b,c\} \subseteq E(N)$ and a bicircular twin $M$ for $N$ relative to $X$, such that 
\begin{itemize} 
\item $X$ spans at least five vertices in each representation for $M$, and 
\item for each subset $Z$ of $X$, $N/Z$ is vertically 3-connected. 
\end{itemize}

\subsection{Contractible edges}

We use the Edmonds-Gallai decomposition of graphs without a perfect matching. 
For any subset $U$ of vertices of a graph $G$, denote by $N(U)$ the set of \emph{neighbours of $U$}, $N(U) = \{ y \in V(G) - U :$ there is an edge $xy \in E(G)$ 
{with $x \in U$}$\}$. 
A graph is \emph{hypomatchable} if for every vertex $v \in V(G)$, $G-v$ has a perfect matching. 

\begin{thm}[Edmonds-Gallai {\cite[Theorem 3.2.1]{MR859549}}] \label{Ed-Gallai}
Let $G$ be a graph and let $A \subseteq V(G)$ be the set of vertices $v$ such that $G$ has a maximum matching that does not cover $v$. 
Set $B = N(A)$ and $C = V(G) - (A \cup B)$. 
Then 
\begin{enumerate}[label={\upshape (\roman*)}]
\item Every odd component $H$ of $G-B$ is hypomatchable and satisfies $V(H) \subseteq A$. 
\item Every even component $H$ of $G-B$ has a perfect matching and satisfies $V(H) \subseteq C$. 
\item For every set $U \subseteq B$, the set $N(U)$ contains vertices in more than $|U|$ odd components of $G-B$. 
\end{enumerate} 
\end{thm}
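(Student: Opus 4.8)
The plan is to fix a maximum matching $M$ of $G$ and read the structure off the alternating-path behaviour of the inessential vertices, with the factor-critical conclusion supplied by \emph{Gallai's Lemma}: \emph{a connected graph in which every vertex is missed by some maximum matching is factor-critical}. I would prove this lemma first, by a shortest-path minimal-counterexample argument. If such a $G$ were not factor-critical then $\mathrm{def}(G)\ge 2$, so some maximum matching exposes two vertices; among all maximum matchings $M$ and pairs of distinct $M$-exposed vertices, choose $u,w$ at minimum distance. One checks $\mathrm{dist}(u,w)\ge 2$ (adjacent exposed vertices would contradict maximality), takes an internal vertex $z$ of a shortest $u$--$w$ path, picks a maximum matching $M'$ missing $z$ that agrees with $M$ maximally, and derives a contradiction by flipping the $M\,\sd\,M'$-alternating path ending at $z$. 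This lemma is the first of the two technical hearts of the argument.

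Next I would establish the coarse decomposition underlying (i) and (ii). The clean observation is that \emph{$A$ is a union of components of $G-B$}: if $v\in A$ has a neighbour $w\notin B$, then $w\in A$ (otherwise $w\in N(A)=B$), so no edge of $G-B$ leaves $A$; hence each component of $G-B$ lies wholly in $A$ or wholly in $C$, and there are no $A$--$C$ edges. It then remains only to pin down the parities. For the $A$-components I would show each component $K$ of $G[A]$ is factor-critical by verifying, via $M$-alternating paths from the exposed vertices, that every vertex of $K$ is missed by some maximum matching \emph{of $K$ itself}; Gallai's Lemma applied to $K$ then forces $K$ factor-critical, hence odd. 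The subtlety here is the lifting of ``inessential in $G$'' to ``inessential in $K$'': one must show that a maximum matching of $G$ missing a given $x\in K$ restricts to a near-perfect matching of $K$ exposing only $x$.

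For (ii), the key sub-fact is that $M(b)\in A$ for every $b\in B$, which standard $M$-alternating-path arguments (starting from the exposed vertices of the $A$-components) deliver. Granting this, $M$ matches distinct vertices of $B$ to distinct $A$-components and sends no $C$-vertex to $B$; since every vertex of $C$ is covered by $M$ (it is essential), $M$ restricts to a perfect matching on each component $H$ of $G-B$ with $V(H)\subseteq C$, forcing $H$ even. Combined with the previous paragraph, every odd component of $G-B$ is an $A$-component and every even component is a $C$-component, giving (i) and (ii).

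Finally, (iii) is the second technical heart. Form the auxiliary bipartite \emph{incidence graph} $\Gamma$ between $B$ and the odd components; the fact that $M$ matches $B$ injectively into distinct odd components gives a matching of $\Gamma$ saturating $B$, so Hall's theorem yields $|N_\Gamma(U)|\ge|U|$, i.e.\ $N(U)$ meets at least $|U|$ odd components for every $U\subseteq B$. The \emph{strict} inequality is the delicate point: if some nonempty $U$ had $N(U)$ meeting exactly $|U|$ odd components $K_1,\dots,K_{|U|}$, then because each $K_i$ is factor-critical (by (i)) one may rotate their near-perfect matchings along an alternating path in $\Gamma$ to build a maximum matching of $G$ exposing a vertex of $U\subseteq B$, contradicting $U\cap A=\varnothing$. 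I expect this strict-surplus step, together with Gallai's Lemma, to be where essentially all the difficulty lies; the remaining parts are bookkeeping with alternating paths.
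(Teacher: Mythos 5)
The paper does not prove this statement at all: it is the classical Gallai--Edmonds structure theorem, quoted as a known tool (it is used only to extract a 3-edge matching of contractible edges via Proposition \ref{marty}). So your argument must stand on its own, and its architecture --- Gallai's Lemma, the observation that $A$ is a union of components of $G-B$, reduction of (i) to Gallai's Lemma on each component, deduction of (ii) from the matching structure, and Hall-with-surplus for (iii) --- is indeed the standard textbook route (Lov\'asz--Plummer). The pieces you actually prove are fine: the component observation is correct, and your sketch of Gallai's Lemma is the standard one (though the final contradiction needs slightly more than ``flip the path ending at $z$'': one must also consider the alternating path at a second vertex covered by $M$ but missed by $M'$, or split into cases according to whether all $M$-edges of $M\,\sd\,M'$ lie on that path).

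\textbf{Gap 1 (the core of the theorem is assumed, not proved).} The two facts you ``grant'' --- that a maximum matching of $G$ missing $x\in K$ restricts to a near-perfect matching of $K$ exposing only $x$, and that $M(b)\in A$ for every $b\in B$ and every maximum matching $M$ --- are not bookkeeping; jointly they \emph{are} the technical heart of the theorem (essentially the Stability Lemma in Lov\'asz--Plummer, proved there by a separate induction). Part of the first fact is accessible to your methods: the Gallai-style argument (minimize $\mathrm{dist}_K$ over all maximum matchings and pairs of exposed vertices in a common component, pass to a maximum matching missing an internal vertex chosen with maximal agreement, and flip) does show that no maximum matching misses \emph{two} vertices of the same component $K$. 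What it does not show is that no vertex of $K$ is matched \emph{into $B$} while $x\in K$ is exposed. The natural induction on $\mathrm{dist}_K(x,u)$ fails here: after flipping an edge pair to move the exposed vertex one step along the path, the new exposed vertex is the matching-partner $z'$ of $z$, whose distance to $u$ is uncontrolled; and parity arguments (``$K$ is odd, so at most one vertex can be deficient'') are circular, since oddness is exactly what factor-criticality is supposed to deliver. Until this claim is proved, parts (i) and (ii) --- including the injectivity of $M$ from $B$ into distinct components, which you also assert --- are unsupported.

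\textbf{Gap 2 (the strict inequality in (iii)).} Your proposed mechanism cannot work as stated. No maximum matching of $G$ exposes a vertex of $U\subseteq B$ --- that is the \emph{definition} of $B$ --- and the rotation you describe cannot manufacture one even under the tightness hypothesis: freeing $u\in U$ forces an extra exposed vertex into whichever odd component terminates the alternating path in $\Gamma$, so the resulting matching always has one edge fewer than a maximum matching; tightness does nothing to repair this deficit, so the construction can never be certified maximum and no contradiction is reached. The correct (and, once the structure facts are available, easy) argument is different: suppose $|N_\Gamma(U)|=|U|$ for some nonempty $U\subseteq B$, pick $u\in U$ and a neighbour $d\in A$ of $u$, let $K_0$ be the component of $G[A]$ containing $d$, and take a maximum matching $M_d$ missing $d$. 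Since $d$ is then the unique vertex of $K_0$ not covered by $M_d\cap E(K_0)$, no vertex of $K_0$ is matched into $B$; on the other hand $M_d$ matches the vertices of $U$ injectively into distinct odd components adjacent to them. Hence $U$ is matched injectively into $N_\Gamma(U)\setminus\{K_0\}$, giving $|N_\Gamma(U)|\geq |U|+1$, a contradiction by pigeonhole. I recommend restructuring around a proof of the Stability Lemma (or an equivalent), after which both of your granted facts and this corrected form of (iii) follow cleanly.
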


Call the set $B$ defined in the Edmonds-Gallai Theorem the \emph{barrier set} of $G$. 
A
{non-loop}
edge $e$ in a 2-connected graph $G$ is \emph{contractible} if $G/e$ remains 2-connected. 

\begin{prop} \label{thecontractiblegraph} 
\addcontentsline{toc}{subsubsection}{The contractible graph remains 2-connected}
Let $G=(V,E)$ be a 2-connected graph on at least 4 vertices, and let $S \subseteq E$ be the set of contractible edges of $G$. 
Then $(V,S)$ is 2-connected. 
\end{prop} 

\begin{proof} 
If $G$ is 3-connected, then every edge is in $S$ and the result holds. 
So assume $G$ has a proper 2-separation. 
We proceed by induction on the number of vertices of $G$. 
The result clearly holds when $|V(G)|=4$, so assume $|V(G)| > 4$ and that the result hold for graphs with less than $|V(G)|$ vertices. 
Let $(A,B)$ be a proper 2-separation of $G$ with $B$ minimal. 
{We claim that}
$B \subseteq S$. 
{To see this, let $V(A) \cap V(B) = \{x,y\}$, and observe that by minimality there is no $x$-$y$ edge in $B$ (such an edge may be removed from $B$ and added to $A$, so $B$ would not be minimal). 
Now suppose $e \in B$ but $e$ is not contractible. 
Then $G/e$ has a cut vertex $a$. 
This implies that in $G$ either $e$ has distinct endpoints $u,v$ that are identified to produce the vertex $a$ in $G/e$, or that $e$ is a loop in $G$, say incident to the vertex $b$ and that $G$ has an $a$-$b$ edge that becomes a loop in $G/e$ incident to $a$. 
In the first case, $G$ has a 2-separation $(A',B')$ with $V(A') \cap V(B') = \{u,v\}$ with $B' \subset B$, contradicting the minimality of $B$. 
In the second case, $G$ has a 2-separation $(A',B')$ with
$V(A') \cap V(B') = \{a,b\}$ and $B' \subseteq B$, again contradicting the minimality of $B$.}
 
Let $G' = (V',E')$ be the graph obtained from $G$ by replacing $B$ with a single edge linking the two vertices in $V(A) \cap V(B)$. 
Let $S'$ be the set of contractible edges of $G'$. 
By induction, $(V',S')$ is 2-connected. 
Let $x$, $y$ be the vertices in $V(A) \cap V(B)$. 
Let $R$ be the set of edges of $G$ that are either loops incident to $x$ or $y$ or have both $x$ or $y$ as ends. 
While no edge in $R$ is contractible in $G$, every edge in $R$ is contractible in $G'$. 
By minimality, no edge in $R$ is in $B$. 
Thus $S = (S' - R) \cup B$, and so $(V,S)$ is 2-connected. 
\end{proof} 

We denote by $\si(G)$ the graph obtained from a graph $G$ by replacing every set of parallel edges with a single edge and removing all loops. 
Denote by $K_{2,n}'$ the graph obtained by adding an edge linking the two degree $n$ vertices of $K_{2,n}$. 

\begin{prop} \label{marty}
\addcontentsline{toc}{subsubsection}{The contraction graph has a matching}
Let $G$ be a 2-connected graph on at least 6 vertices, and let $S$ be the set of contractible edges of $G$. 
Then $G[S]$ has a matching of size 3 unless $\si(G)$ is $K_{2,n}$ or $K_{2,n}'$. 
\end{prop}

\begin{proof} 
Suppose $G[S]$ does not have a matching of size 3. 
Then its barrier set $B$ has size at most 2, 
{by Hall's Theorem and Theorem \ref{Ed-Gallai}}. 
If $|B| = 1$, then $G[S]$ has the single vertex in $B$ as a cut vertex, contradicting Proposition \ref{thecontractiblegraph}. 
So $|B| = 2$, and $\si(G[S])$ is isomorphic to $K_{2,n}$, where $n \geq 4$. 
Let $e'$ be the edge of $G$ linking the two vertices of degree $n+1$ of $\si(G[S])$, if it exists. 
It is easy to see that if $G$ had an edge $e$, other than $e'$, that is not in $S$, then $e$ would nevertheless be contractible
a contradiction. 
Thus $\si(G)$ is $K_{2,n}$ or $K_{2,n}'$. 
\end{proof} 

\begin{prop} \label{constance} 
\addcontentsline{toc}{subsubsection}{subsets of contractible matchings are contractible}
Let $G$ be a 2-connected graph and let $X$ be a matching in $G$. 
If $G/X$ is 2-connected, then $G/Z$ is 2-connected for every $Z \subseteq X$. 
\end{prop}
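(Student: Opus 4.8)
The plan is to reduce to a single-edge contraction and then induct. The key preliminary observation is that because $X$ is a matching, for any $Z \subseteq X$ and any $e = uv \in X \setminus Z$ the two ends $u,v$ are never identified with each other or with any other vertex when we form $G/Z$; hence $e$ remains a non-loop edge of $G/Z$ with distinct ends, and the edges incident with $u$ (resp.\ $v$) in $G/Z$ are exactly the edges incident with $u$ (resp.\ $v$) in $G$, merely with their far ends possibly amalgamated. Granting this, it suffices to prove the following one-step statement: if $e \in X$ and $Z \subseteq X \setminus \{e\}$ are such that $G/(Z \cup \{e\})$ is $2$-connected, then $G/Z$ is $2$-connected. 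Indeed, any $Z \subseteq X$ can be obtained from $X$ by deleting its elements from the contraction set one at a time, and starting from the hypothesis that $G/X$ is $2$-connected, each such step preserves $2$-connectivity by the one-step statement (using $(G/Z)/e = G/(Z \cup \{e\})$).

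To prove the one-step statement, write $H = G/Z$, so that $H/e$ is $2$-connected and I must show $H$ is $2$-connected. Since $H/e$ is $2$-connected it has at least $3$ vertices and is connected, whence $H$ is connected with at least $4$ vertices; so if $H$ fails to be $2$-connected it has a cut vertex $x$. I would argue by cases on the location of $x$. If $x \notin \{u,v\}$, then as $e = uv$ is an edge of $H$, both $u$ and $v$ lie in a single component of $H - x$, so contracting $e$ does not merge the two sides of the separation at $x$; thus $x$ remains a cut vertex of $H/e$, contradicting its $2$-connectivity. If $x = u$ (the case $x = v$ being symmetric), let $D$ be the component of $H - u$ containing $v$ and let $D'$ be any other component. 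Deleting the contracted vertex $\{u,v\}$ from $H/e$ is the same as deleting both $u$ and $v$ from $H$; provided $D - v$ is nonempty this separates $D - v$ from $D'$, so $\{u,v\}$ is a cut vertex of $H/e$ --- again a contradiction.

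The only remaining possibility is that $x = u$ with $D - v = \emptyset$, i.e.\ that $v$ has no neighbour in $H$ other than $u$ (symmetrically, that $u$ has no neighbour other than $v$). This is exactly where the matching hypothesis does its work. By the preliminary observation the neighbours of $v$ in $H = G/Z$ arise from the edges incident with $v$ in $G$, and neither $u$ nor $v$ has been identified with another vertex; so if $v$ were adjacent only to $u$ in $H$, the same would hold in $G$, forcing $u$ to be a cut vertex of $G$ (it isolates $v$) and contradicting the $2$-connectivity of $G$. Hence this degenerate case cannot occur, every possibility for $x$ has been excluded, and $H$ is $2$-connected, completing the induction. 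The step I expect to be the main obstacle is precisely this last one: the bare one-step statement ``$H/e$ $2$-connected $\Rightarrow$ $H$ $2$-connected'' is \emph{false} for multigraphs --- a parallel pair of $u$-$v$ edges lets the contraction repair a genuine cut vertex at $u$ --- and it is the combination of $X$ being a matching with the $2$-connectivity of the ambient graph $G$ that rules out exactly this configuration.
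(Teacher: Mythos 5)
Your proof is correct, but it takes a genuinely different route from the paper's. The paper argues in a single jump from $G/X$ to $G/Z$: given $u,v \in V(G/Z)$, it takes a pair of internally disjoint $u$-$v$ paths in $G/X$ and lifts them back through the uncontraction of the edges of $X \setminus Z$, the point being that each merged vertex splits into a pair of adjacent vertices, disjointly across distinct matching edges, so the lifted paths remain internally disjoint. You instead peel off one edge at a time, reducing to the statement that uncontracting a single matching edge preserves $2$-connectivity, and you prove that by cut-vertex analysis rather than path lifting. What your route buys is precision about the hypotheses: your degenerate case (the end $v$ of $e$ having no neighbour other than $u$) is exactly where the matching condition and the $2$-connectivity of $G$ itself are used, and your parallel-edge example shows the bare one-step statement is false without them. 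It also handles explicitly a case the paper's terse proof skates over: when $u$ and $v$ are the two ends of an edge of $X \setminus Z$, their images in $G/X$ coincide, so there is no pair of $u$-$v$ paths in $G/X$ to lift, and a separate argument (essentially your degenerate case) is needed there. The paper's approach buys brevity and treats all of $X \setminus Z$ simultaneously; yours is longer but self-contained and makes the role of each hypothesis explicit.
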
 

\begin{proof} 
Suppose to the contrary that $Z \subseteq X$ and $G/Z$ has a 1-separation $(A,B)$, say with $V(A) \cap V(B) = \{x\}$. 
Since $G$ does not have a cut vertex, a pair of vertices $u, v \in V(G)$ are identified in $G/Z$ as the vertex $x$, by contraction of an $u$-$v$ path contained in $Z$. 
But $Z$ is a matching, so this path consists of a single $u$-$v$ edge. 
Since $x$ is not a cut vertex of $G/X$, one of $A$ or $B$ has all of its edges contained in $X - Z$. 
But this implies that at least two edges in $X$ are incident to either $u$ or $v$, contrary to the fact that $X$ is a matching. 
\end{proof} 

\begin{prop} \label{sax} 
\addcontentsline{toc}{subsubsection}{a matching of contractible edges is contractible}
Let $G$ be a 2-connected graph and let $X$ be a matching of three edges in $G[S]$, where $S$ is the set of contractible edges of $G$. 
Then $G/Z$ is 2-connected for every subset $Z \subseteq X$. 
\end{prop}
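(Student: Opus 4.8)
The plan is to reduce everything to the single claim that $G/X$ is $2$-connected, since Proposition~\ref{constance} then immediately delivers $2$-connectivity of $G/Z$ for every $Z \subseteq X$. Write $X = \{e_1, e_2, e_3\}$ with $e_i = u_iv_i$. Because $X$ is a matching, the six endpoints are distinct, so $G$ has at least six vertices and $G/X$ at least three; moreover $G/X$ is connected, being a contraction of the connected graph $G$. By the paper's definition of $2$-connectivity it therefore suffices to show that $G/X$ has no cut vertex.

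First I would extract the one consequence of contractability that the argument needs. For each $i$, the edge $e_i$ lies in $S$, so $G/e_i$ is $2$-connected; in particular the vertex $w_i$ formed by identifying $u_i$ with $v_i$ is not a cut vertex of $G/e_i$. Since $(G/e_i) - w_i = G - \{u_i, v_i\}$, this says exactly that $G - \{u_i, v_i\}$ is connected.

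Next I would check that no vertex $z$ of $G/X$ is a cut vertex, in two cases. If $z$ is an ordinary vertex, that is, one coming from a vertex of $G$ incident to no edge of $X$, then $(G/X) - z = (G-z)/X$; here $G-z$ is connected because $G$ is $2$-connected, and contracting the matching $X$ cannot disconnect it, so $(G/X)-z$ is connected. If instead $z = w_i$ is the vertex obtained by contracting $e_i$, then deleting it from $G/X$ corresponds in $G$ to removing both $u_i$ and $v_i$ while still contracting the remaining two edges, so $(G/X) - w_i = (G - \{u_i, v_i\})/(X - e_i)$; by the previous paragraph $G - \{u_i, v_i\}$ is connected, $X - e_i$ remains a matching in it, and contraction preserves connectivity, so $(G/X) - w_i$ is connected as well. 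Hence $G/X$ has no cut vertex, so it is $2$-connected, and Proposition~\ref{constance} completes the proof.

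The one spot calling for care, the ``main obstacle'' (modest as it is), is the identification $(G/X) - w_i = (G - \{u_i, v_i\})/(X - e_i)$: one must confirm that deleting a contracted vertex really amounts to deleting the two original endpoints, and that the matching property keeps the other two contractions unaffected by that deletion. Beyond this bookkeeping the proof rests only on the two elementary facts that deleting one vertex from a $2$-connected graph leaves it connected and that contracting an edge set never disconnects a connected graph.
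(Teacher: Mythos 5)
Your proof is correct, and it shares the paper's opening move --- Proposition \ref{constance} reduces everything to showing that $G/X$ itself is 2-connected --- but the core argument is genuinely different. The paper argues by contradiction: a proper 1-separation of $G/X$ is lifted back to a $k$-separation of $G/a$ (which is 2-connected since $a \in S$), whose boundary must be spanned by the contracted edges among $\{b,c\}$; the paper then concludes that $b$ and $c$ would have to share an end in $G/a$, which the matching hypothesis forbids. You instead verify directly that no vertex of $G/X$ is a cut vertex, splitting into the case of an ordinary vertex (handled by 2-connectivity of $G$ plus the fact that contraction preserves connectivity) and the case of a contracted vertex $w_i$ (handled by translating $e_i \in S$ into ``$G - \{u_i,v_i\}$ is connected'' and commuting the deletion of $w_i$ with the remaining contractions). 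Your route uses the contractability of all three edges, whereas the paper's written proof invokes only that of $a$ together with the matching structure; in exchange, yours is longer but fully explicit. Indeed, your case $z = w_i$ covers a configuration the paper's terse argument passes over: when the cut vertex of $G/X$ is the contraction image of $b$ alone, the lifted boundary is just the two ends of $b$, not a path formed by $b$ and $c$ together, so the paper's conclusion that ``$b$ and $c$ share an end in $G/a$'' does not apply to it; eliminating that configuration requires exactly the contractability of $b$ that your argument uses. So your proof is not only valid but makes explicit a case the published proof leaves implicit.
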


\begin{proof} 
By Proposition \ref{constance}, we just need to show that $G/X$ is 2-connected. 
Let $X = \{a,b,c\} \subseteq E(G)$. 
Suppose for a contradiction that $G/X$ has a proper 1-separation. 
Because $a \in S$, $G/a$ does not have a proper 1-separation. 
So there must be a $k$-separation $(A,B)$ of $G/a$, where $k \in \{2,3\}$, 
such that 
{either (i) $k=2$ and one of $b$ or $c$ has the vertices in $V(A) \cap V(B)$ as its endpoints, or (ii)} 
$G[\{b,c\}]$ is a path 
{in $G/a$}
containing $V(A) \cap V(B)$. 
{In the first case (i), since both $b$ and $c$ are contractible in $G$ one of the vertices in $V(A) \cap V(B)$ must be an endpoint of $a$ in $G$. But this is contrary to our choice of $a, b, c$ as a matching. The second case (ii)} 
implies that $b$ and $c$ share an end in $G/a$. 
Since $b$ and $c$ do not share an end in $G$, this implies $a$ shares an end with each of $b$ and $c$. 
But 
{again} 
this is not the case, as $X$ is a matching in $G$. 
\end{proof}

\subsection{Staying type 3} 

When constructing our twin for an excluded minor, relative to a set $X$, we would like not only to remain vertically 3-connected each time a subset of $X$ is contracted, but also remain of the same type with each contraction. 
In this subsection, we consider type 3 representations. 
In the next, we consider types 1 and 2. 

\begin{prop} \label{baritone} 
\addcontentsline{toc}{subsubsection}{contract a matching, stay type 3}
Let $X$ be a matching of three edges in a 2-connected type 3 graph $G$ with at least seven vertices, such that $G/Z$ is 2-connected for every subset $Z$ of $X$. 
Suppose that there is a subset $Y$ of $X$ such that $G/Y$ is no longer type 3. 
Then there is a set $X'$ of three 
{non-loop} 
edges spanning at least five vertices for which $G/Z$ is 2-connected and type 3, for every subset $Z$ of $X'$. 
\end{prop}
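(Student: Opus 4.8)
The plan is to translate the type of each contraction $G/Z$ back into a statement about $G$ by means of the circuit rank $\beta(H)=|E(H)|-|V(H)|+c(H)$, which is invariant under contraction of non-loop edges and vanishes exactly on forests. Since the $X'$ I produce will be a matching (so each $Z\subseteq X'$ merges disjoint pairs of vertices), for any vertex set $T$ of $G/Z$ the graph $(G/Z)-T$ is obtained from $G-\tilde T$, where $\tilde T$ is the preimage of $T$, by contracting the surviving edges of $Z$, all of which are non-loops; hence $(G/Z)-T$ is a forest if and only if $G-\tilde T$ is a forest. Because each $G/Z$ will be $2$-connected (Proposition \ref{sax}), any apex of $G/Z$ lies in a \emph{trivial} $1$-separation, so a type-1 representation $G/Z$ has $(G/Z)-w$ a forest for its apex $w$, and a type-2 representation has $(G/Z)-\{u,w\}$ a forest for its rotation vertex $u$ and apex $w$.

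I would first use this to characterise exactly which contractions leave the type-3 class. If $G/Z$ is type 1 with apex $w$, then $G-\tilde w$ is a forest; since $G$ is type 3, $G$ minus a single vertex is never a forest, so $w$ cannot be an original vertex and must arise by contracting an edge $e=pq\in Z$, giving $G-\{p,q\}$ a forest. Conversely, such an edge already makes $G/e$ type 1. Thus $G/Z$ is type 1 precisely when $Z$ contains an edge $pq$ with $G-\{p,q\}$ a forest; call these the \emph{type-1 culprits}. The same translation applied to type 2 shows that $G$ minus the (at most four) vertices of $\tilde{T}$, for $T=\{u,w\}$, is a forest while $G$ minus any single vertex is not; together with Wagner's rotation data $L_1,L_2,L_3$ and the uniqueness of the apex (Proposition \ref{serabande}), this confines $u$, $w$, and the rotation lines to a bounded subgraph of $G$ about the ends of the contracted edges. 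The resulting \emph{type-2 culprits} are again finitely many edges whose contraction merges two vertices into such an apex, all incident to this bounded structure.

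With the culprits localised, I would construct $X'$ from contractable edges. By hypothesis the bad subset $Y$ contains a culprit, whose special vertices (the pair $\{p,q\}$ in the type-1 case) determine the bounded substructure that carries every culprit edge. By Proposition \ref{thecontractablegraph} the contractable edges span a $2$-connected subgraph on all of $V(G)$, and by Proposition \ref{marty} they contain a matching of size three unless $\si(G)\in\{K_{2,n},K_{2,n}'\}$. Since $G$ has at least seven vertices, the contractable subgraph reaches well beyond the bounded culprit region, so I would select three contractable edges forming a matching $X'$ that spans six (hence at least five) vertices and contains no culprit of either type. Proposition \ref{sax} then gives that $G/Z$ is $2$-connected for every $Z\subseteq X'$, and for a matching the characterisations above show that no such $G/Z$ is type 1 or type 2; hence every $G/Z$ is type 3, as required. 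The highly structured exceptional graphs with $\si(G)$ equal to $K_{2,n}$ or $K_{2,n}'$, where a size-three matching of contractable edges may be unavailable, I would dispose of directly (falling back to a near-matching spanning five vertices and its adjusted culprit condition).

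The main obstacle is the type-2 analysis. Unlike type 1, a type-2 apex need not be created by contracting an edge, so pinning down the type-2 culprits requires carefully exploiting the rotation structure via Theorem \ref{goose} and Lemma \ref{duckduck} to confine the rotation vertex, apex, and rotation lines to a bounded region. A secondary obstacle is the counting step inside Step~3: verifying that, after deleting the bounded set of culprit edges, the remaining contractable edges still contain a size-three matching spanning at least five vertices. This is where the Edmonds--Gallai/barrier argument underlying Proposition \ref{marty} must be re-run in the restricted setting, and where the $K_{2,n}$ and $K_{2,n}'$ exceptions demand their separate, though routine, treatment.
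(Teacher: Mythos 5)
Your overall architecture (localise the ``culprit'' contractions, then choose a matching of contractable edges avoiding them via Propositions \ref{thecontractablegraph}, \ref{marty} and \ref{sax}) is close in spirit to the paper's proof, which likewise draws the new triple from the set of contractable edges. But your central characterisation of the culprits is false, and this is a genuine gap rather than a detail. You claim that $B(G/Z)$ is type 1 precisely when $Z$ contains an edge $pq$ with $G-\{p,q\}$ a forest. That translation only detects when the graph $G/Z$ \emph{itself} has a vertex whose deletion leaves a forest; but type is a property of the matroid, invariant under rolling and rotation, so $B(G/Z)$ is type 1 as soon as \emph{some} representation obtained from $G/Z$ by these operations is type 1. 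Unbalanced loops are exactly what break your equivalence: a loop at a vertex $y$ other than the prospective apex $w$ can be unrolled into a $y$-$w$ edge, so $B(G/Z)$ is type 1 whenever $(G/Z)-w$ is a forest \emph{plus one loop}, even though no vertex of $G/Z$ has forest complement. Concretely, if $G$ has a vertex $v$ with $G-v$ a tree plus two loops at distinct vertices $x,y$, and $a=vx\in X$, then in $G/a$ one loop sits at the merged vertex $\bar v$ and one at $y$; unrolling the loop at $y$ to $\bar v$ yields a type-1 representation, so $B(G/a)$ is type 1, yet $G-\{v,x\}$ is not a forest (it contains the loop at $y$) and your criterion declares $a$ safe. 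The same failure occurs when an edge of $X$ lies in a parallel pair of $G$: contracting it creates a loop which can then be unrolled. These configurations cannot be excluded --- $G$ may certainly carry loops and parallel edges (parallel pairs of the matroid are forced to appear as pairs of loops in any representation) --- and the paper's proof devotes four of its six cases ((2), (4), (5) and (6)) to precisely them. An $X'$ built to avoid only your culprits could contain such an edge, and then some $G/Z$ with $Z\subseteq X'$ would fail to be type 3, collapsing the argument and everything downstream of it (Lemmas \ref{bill}, \ref{B6}, \ref{billbill}).

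The second gap is one you flag yourself: the type-2 analysis is left as an assertion (``the resulting type-2 culprits are again finitely many edges \dots all incident to this bounded structure''), and it suffers from the same root problem compounded by rotation: a type-2 witness for $B(G/Z)$ need not be $G/Z$ itself, the apex and rotation vertex need not be merged vertices at all, and contraction can create the three-line rotation configuration at original vertices of $G$. So even granting the type-1 part, the proposal does not go through as written. Repairing it essentially forces the case analysis the paper performs: enumerate the loop and parallel-pair configurations of $G$ under which some $G/Y$ becomes type 1 or type 2 after rolling or rotation, and then choose the triple $X'$ from the contractable edges so as to avoid each configuration explicitly.
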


\begin{proof} 
Let $X = \{a,b,c\}$. 
After possibly permuting the labels of edges $a$, $b$, $c$, one of the following holds. 
\begin{enumerate}
\item There is a vertex $x$ such that $G-x$ is type 1 (resp.\ type 2) with apex vertex $v$, and $a$ has ends $v,x$. 
\item There is a vertex $x$ such that $(G-x) \del b$ is type 1 with apex $v$, $a$ has ends $v, x$, and 
$b$ is in a parallel pair 
{in $G$}. 
\item $G \del a$ is type 2 with apex $v$ and rotation vertex $u$, $a$ is incident to $v$ and is in a parallel pair $\{a,f\}$, and $f$ is in the line of $G \del a$ whose ends are $v$ and $u$. 
\item $G$ has a vertex $v$ such that $G-v$ is acyclic aside from exactly two loops that are not incident to the same vertex, and $a$ has ends $v$ and $x$, where $x$ is incident to one of the loops of $G-v$. 
\item $G$ has a vertex $v$ such that $G-v$ is acyclic aside from exactly one loop and one pair of parallel edges, $a$ has ends $v, x$, where $x$ is incident to the loop of $G-v$, and $b$ is one of the edges in the parallel pair. 
\item Edge $a$ is in a parallel pair; $G \del a$ is type 1 with apex $v$ and has no loop that is not incident to $v$. 
\end{enumerate} 

Let $S$ be the set of contractible edges of $G$. 
In each case, we find a triple $\{a',b',c'\}$ of edges as desired in $G[S]$. 

In cases (1) and (2) there is a cycle $C$ in $G-v$ (resp.\ in $G-\{u,v\}$, where $u$ is the rotation vertex when $G-x$ is type 2). 
 If $|C| \geq 5$, we may choose $a', b', c'$ as required from the edges of $C$. 
If $|C| \leq 4$, then because $G$ has at least 7 vertices, we can choose $a'$ from among the edges of $C$, $b'$ from among those edges incident to a vertex in $C$ that do not have $v$ as an end, and $c'$ from the set of edges incident to $v$ that are not incident to a vertex in $C$ nor an end of $b$. 
Our rank and connectivity assumptions guarantee that such edges exist. 

In the remaining cases, all 
{non-loop} 
edges in $G-v$ are in $S$, and $G-v$ consists of a tree, after possibly removing some edge or edges that we wish to preserve in $G/X'$ in order to ensure that $G/X'$ remains type 3; namely, an edge in a parallel pair incident to the vertex of rotation when $G \del a$ is type 2, loops not incident to $v$, or an edge from a parallel pair. 
It is easy to see that in each case we can find a triple $a',b',c'$ in this tree, as desired. 
\end{proof}

\begin{lem} \label{bill} 
\addcontentsline{toc}{subsubsection}{finding contractible edges in $G$}
Let $N$ be an excluded minor for the class of bicircular matroids, of rank at least 
{nine}. 
Let $e \in E(N)$ and assume that $G$ is a 2-connected type 3 representation for $N \del e$. 
Then there are 
{non-loop} 
edges $a,b,c \in E(G)$ spanning at least five vertices such that for every subset $Z \subseteq \{a,b,c\}$, $G/Z$ remains 2-connected and type 3. 
\end{lem}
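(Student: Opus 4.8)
The plan is to obtain the triple by combining Proposition~\ref{marty}, Proposition~\ref{sax}, and Proposition~\ref{baritone}, after first recording the structural consequences of the hypotheses.

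First I would establish that $|V(G)| \geq 7$ and that $N \del e$ is essentially 3-connected. Since $N$ is vertically 3-connected (Lemma~\ref{Nvert3}) it has no coloop, so $r(N \del e) = r(N) \geq 7$; as $G$ is 2-connected it is connected and contains a cycle, giving $r(B(G)) = |V(G)|$ and hence $|V(G)| \geq 7$. Moreover $N \del e$ is connected (deletion from a vertically 3-connected matroid), so by Lemma~\ref{bobby} an essential 2-separation of $N \del e$ would force a cut vertex or cut edge of $G$, which is impossible; thus $N \del e$ is essentially 3-connected. Replacing $G$ by a standard representation, Lemma~\ref{frankie} then shows that $G$ has at most two deficient vertices, and since $G$ is 2-connected it has no balloon.

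Let $S$ be the set of contractable edges of $G$. By Proposition~\ref{marty} either $G[S]$ has a matching of size three, or $\si(G) \in \{K_{2,n}, K_{2,n}'\}$. In the former case I would take such a matching $X$; since the loopless part of $G[S]$ is 2-connected on at least seven vertices (Proposition~\ref{thecontractablegraph}) it has a matching of size two, so $X$ may be chosen with at most one loop and hence to span at least five vertices. Proposition~\ref{sax} then gives that $G/Z$ is 2-connected for every $Z \subseteq X$. If each such $G/Z$ is type 3 we are done with $a,b,c$ the edges of $X$; otherwise some $G/Y$ is not type 3, and Proposition~\ref{baritone}, whose hypotheses are now met, delivers a triple $X'$ spanning at least five vertices with $G/Z$ 2-connected and type 3 for every $Z \subseteq X'$.

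The exceptional case $\si(G) \in \{K_{2,n}, K_{2,n}'\}$ is where the real work lies, because then $G$ (indeed $G[S]$) has no matching of size three and the matching machinery is unavailable. Writing $u,w$ for the two barrier vertices and $x_1, \dots, x_n$ (with $n = |V(G)| - 2 \geq 5$) for the remaining vertices, every edge joins some $x_i$ to $\{u,w\}$, possibly with multiplicities, together with at most a $u$-$w$ edge and loops at $u$ or $w$; no $x_i$ carries a loop, since a loop at $x_i$ together with edges $ux_j, wx_k$ (for distinct $i,j,k$) would be a matching of size three in $G[S]$. Because at most two of the $x_i$ are deficient, at least $n-2 \geq 3$ of them are \emph{heavy}, carrying a digon to $u$ or to $w$; and because $G$ is type 3 there is an obstruction on each side---a digon to $u$ or a loop at $u$, and likewise for $w$---for otherwise deleting $u$ or $w$ would leave an acyclic graph and make $G$ type 1. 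I would then select edges at three distinct heavy vertices $x_i$, routed so that between them they meet both $u$ and $w$ (so that their ends include the five vertices $u, w$ and three of the $x_i$), and so that at a digon-obstruction vertex the chosen edge is one of the two parallel edges. Contracting such an edge turns its parallel partner into a loop, so both obstructions survive after contracting any subset $Z$; hence each $G/Z$ stays type 3, while a direct check on the $K_{2,n}$ structure shows each $G/Z$ remains 2-connected. The main obstacle is precisely this case: one must verify 2-connectivity and the persistence of both obstructions through every subset $Z$ simultaneously, and arrange the choice of the three edges so that they meet both $u$ and $w$ no matter how the digons and loops are distributed.
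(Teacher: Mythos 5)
Your preliminary reductions and your first case are sound and coincide exactly with the paper's argument: Proposition \ref{marty} either yields a three-edge matching in $G[S]$, in which case Proposition \ref{sax} gives 2-connectivity of every $G/Z$ and Proposition \ref{baritone} repairs the type if some contraction fails to be type 3, or else $\si(G)\in\{K_{2,n},K_{2,n}'\}$. The gap is in the exceptional case, and it is concentrated in your claim that no $x_i$ carries a loop. You justify this by saying that a loop at $x_i$ together with $ux_j$ and $wx_k$ would be a matching of size three in $G[S]$. But in the paper's framework---and in the standard one underlying the Edmonds--Gallai theorem, on which Proposition \ref{marty} rests---a matching consists of loopless edges; Propositions \ref{marty}, \ref{sax} and \ref{baritone} are stated and proved for such matchings (the case analysis proving Proposition \ref{baritone}, for instance, treats each edge of the matching as having two distinct ends). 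So nothing you have established excludes loops at vertices of $W$; the paper's own proof devotes four of its five cases to precisely these configurations. If you instead insist on a loop-allowing notion of matching, then your first case invokes Propositions \ref{sax} and \ref{baritone} outside their stated hypotheses, and both would have to be re-proved for matchings containing loops; either way the argument as written has a hole.

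The failure is not cosmetic. Take $G$ to be $K_{2,5}$ with a loop added at each of $x_1,x_2,x_3$. This graph is 2-connected, is type 3 (loops at three distinct vertices persist in every representation, since 2-connectivity rules out rolling, so no representation has an apex), has exactly two deficient vertices ($x_4$ and $x_5$), and its set of contractable edges contains no loopless matching of size three; so it satisfies everything you have derived in the exceptional case. Yet it has no heavy vertices at all: the non-deficient vertices $x_1,x_2,x_3$ of $W$ carry loops, not digons, and there are no loops or digons at $u$ or $w$. Your assertion that ``at least $n-2\geq 3$ of them are heavy'' is false here, your description of the two obstructions (digon to $u$ or loop at $u$, and likewise for $w$) does not match the actual type-3 obstructions (the three loops in $G[W]$), and your selection procedure cannot even begin. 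A correct triple does exist---for example $ux_1$, $wx_2$, $ux_3$, since contracting any subset keeps a loop on each merged vertex---but reaching it requires the explicit case analysis over loop placements (loops in $G[W]$, loops at $u_1$ or $u_2$, digons, and their combinations) that the paper carries out and that your argument omits.
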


\begin{proof}
Let $S$ be the set of contractible edges of $G$. 
If $G[S]$ has a matching $\{a,b,c\}$ of size 3, then by Proposition \ref{sax}, $G/Z$ is 2-connected for every $Z \subseteq \{a,b,c\}$. 
If $G/Z$ remains type 3 for every $Z \subseteq \{a,b,c\}$, we are done. 
Otherwise, by Proposition \ref{baritone}, there are three edges $a',b',c'$ as desired, and we are done. 

So assume that $G[S]$ does not have a matching of size 3. 
By Proposition \ref{marty}, $\si(G)$ is $K_{2,n}$ or $K_{2,n}'$. 
Let $u_1$, $u_2$ be the two vertices of degree $n$ or $n+1$ in $\si(G)$, and let $W = \{x_1, \ldots, x_n\}$ be the set of vertices of degree 2 in $\si(G)$. 
Our rank assumption implies that $n \geq 7$. 
By Lemma \ref{deficientdan}, at most two of $x_1, \ldots, x_n$ have degree 2 in $G$, so there are at least five vertices in $W$ of degree at least three. 
Because $G$ is type 3, up to relabelling $u_1$ and $u_2$, one of the following holds:  
\begin{itemize}
\item there are at least two loops in $G[W]$; 
\item there is just one loop incident to a vertex in $W$, but there is a loop incident to each of $u_1$ and $u_2$; 
\item there is just one loop incident to a vertex in $W$, a loop incident to $u_1$, and a vertex in $W$ with at least two edges linking it to $u_2$; 
\item there is just one loop in $G$, incident to a vertex in $W$, there is a vertex in $W$ with at least two edges linking it with $u_1$, and 
a vertex in $W$ with at least two edges linking it with $u_2$; or 
\item $G$ has no loops, and 
\begin{itemize} 
\item there are either at least two vertices in $W$ sending two edges to $u_1$ or there is a vertex in $W$ sending at least three edges to $u_1$, and 
\item there are either at least two vertices in $W$ sending two edges to $u_2$ or there is a vertex in $W$ sending at least three edges to $u_2$. 
\end{itemize} 
\end{itemize}
It is straightforward to check that in any case a triple of edges exists as required. 
\end{proof}

Observe that the property of being type 1 or type 2 (equivalently, the property of not being type 3) is closed under minors: that is, if $G$ is a type 1 graph, then every minor of $G$ is type 1, while if $G$ is a type 2 graph, then every minor of $G$ is either type 2 or type 1. 
This easy observation will be used more than once. 

\begin{prop} \label{sym29} 
\addcontentsline{toc}{subsubsection}{type 1/2 is minor-closed}
Let $M'$ be a connected minor of a connected bicircular matroid 
$M$. 
If $M'$ is type 3 then $M$ is type 3. 
\end{prop}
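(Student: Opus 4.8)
The plan is to prove the contrapositive: assuming $M$ is \emph{not} type 3, I will show that the connected minor $M'$ is not type 3 either. Since $M$ is a connected bicircular matroid that is not type 3, it is type 1 or type 2, and so by definition it has a bicircular representation $G$ that is, respectively, a type 1 or type 2 graph. I would fix such a $G$ together with its (loops-only) bias $\Bb$ and work with the biased-graph representation $(G,\Bb)$.

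The next step is to realise the matroid minor $M'$ at the level of representations. Writing $M' = M \del D / C$ for disjoint $D, C \subseteq E(M)$, I would apply the corresponding sequence of biased-graph deletions and contractions from Section \ref{minorsofbicircularmatroids}; by Proposition \ref{wakawaka} these agree with the matroid operations, so the resulting biased graph $(G',\Bb')$ represents $M'$, with $G'$ a minor of $G$. The engine of the proof is then the observation recorded immediately before the statement, that being type 1 or type 2 is preserved under passing to minors: applied to $G'$, it shows $G'$ is a type 1 or type 2 graph. Because $M'$ is connected of rank at least five it is loopless, so $(G',\Bb')$ carries no balanced loops, and discarding any isolated vertices yields a connected type 1 or type 2 representation of $M'$. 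Hence $M'$ has a type 1 or type 2 representation, so by definition $M'$ is type 1 or type 2, i.e.\ not type 3, which is exactly what the contrapositive requires.

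The hypotheses $r(M') \ge 5$ and connectedness of $M'$ enter precisely to make this last deduction legitimate: by Theorem \ref{goose} and the discussion following Lemma \ref{duckduck}, the trichotomy of types is well defined and mutually exclusive for connected matroids of rank at least five, so exhibiting a single type 1 or type 2 representation suffices to conclude $M'$ is not type 3.

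The step I expect to be the main obstacle is the faithful transfer of the graph-level observation across the biased-graph contraction operations, in particular the contraction of an unbalanced loop (case (3) of the definitions in Section \ref{minorsofbicircularmatroids}), which replaces the edges at a vertex by loops and can a priori disturb the single apex witnessing type 1 or the rotation structure witnessing type 2. Deletions and contractions of edges with distinct ends are routine, since deleting an edge, or contracting an edge of a forest, keeps the relevant subgraph acyclic; the content is in checking that, once $M'$ is connected of rank at least five, the apex (respectively the rotation vertex and rotation lines) of $G$ is inherited by a representation of $M'$, possibly after returning to a standard representation by a rolling or replacement operation. This is where I would spend the bulk of the verification.
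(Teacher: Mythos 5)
Your proposal is correct and takes essentially the same route as the paper: the paper's proof likewise realises the minor at the level of graph representations (writing $M' = M/C \del D$, so that $G/C \del D$ represents $M'$ and, by Theorem \ref{goose}, equals any given representation $G'$ of $M'$ up to rolling, rotation, and replacement) and then applies the minor-closedness observation stated just before the proposition; your contrapositive phrasing is the same argument run backwards. The unbalanced-loop-contraction subtlety you flag as the main obstacle is exactly the step the paper absorbs into that observation without further comment, so your plan is no less complete than the paper's own four-line proof.
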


\begin{proof} 
Let $G'$ be a graph representing $M'$ and let $G$ be a graph representing $M$. 
Let $C, D \subseteq E(M)$ such that $M/C \del D = M'$. 
By Theorem \ref{goose}, 
$G/C \del D = G'$, up to rolling, rotation, and replacement, 
{unless one of these graphs is in $\Ggg$. 
If so, then $M'$ is essentially 3-connected, and so by Lemma \ref{duckduck}, $G/C \del D = G'$, up to rearrangement and replacement. 
In either case,} 
since $G'$ is type 3, $G$ must also be type 3. 
\end{proof} 

\begin{lem} \label{B6} 
\addcontentsline{toc}{subsubsection}{Repairing lines \& balloons with a contraction}
Let $N$ be an excluded minor for the class of bicircular matroids. 
Let $e \in E(N)$ and assume that $G$ is a 2-connected type 3 representation for $N \del e$. 
Let $Z$ be a set of at most three 
{non-loop} 
edges for which $G/Z$ remains 2-connected and type 3, and let $H$ 
be a graph representing $N/Z$. 
Then $H$ is 2-connected and type 3, and $N/Z$ is vertically 3-connected. 
\end{lem}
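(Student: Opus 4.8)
The statement bundles three conclusions about $H$: that it is $2$-connected, that it is type 3, and that $N/Z$ is vertically $3$-connected. The natural strategy is to transport the good structure of $G$ across the contraction, using the fact that $G/Z$ and $H\del e$ both represent the same matroid $N/Z\del e$, and then to rule out the ways in which adding $e$ back into $H$ could spoil connectivity or change the type.

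First I would set up the comparison. Since $G/Z$ remains $2$-connected and type 3, and $N/Z$ has rank at least five (because $N$ has rank at least eight and $|Z|\le 3$), the matroid $N/Z\del e$ is essentially $3$-connected of rank at least five, represented both by $G/Z$ and by $H\del e$. By Lemma \ref{duckduck}, $H\del e = G/Z$ up to rolling, rotation, and replacement; in particular $H\del e$ is type 3. The main work is then to understand how the single edge $e$ sits in $H$.

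\textbf{Type 3.} Since type 3 is not minor-closed in the wrong direction, I would argue via Proposition \ref{sym29}: as $N/Z\del e = (N\del e)/Z$ is a connected minor of $N\del e$ of rank at least five and is represented by the type 3 graph $G/Z$, the matroid $N\del e$ (hence any representation, in particular $H$ with $e$ added back) is type 3. More directly, $H$ represents $N/Z$, which is a contraction-minor of $N\del e$; since $G$ represents $N\del e$ is type 3, Proposition \ref{sym29} forces every connected representation of $N/Z$ to be type 3, so $H$ is type 3.

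\textbf{$2$-connectivity of $H$.} This I expect to be the crux, and it mirrors the analysis in Lemma \ref{B5}. Since $H\del e$ is $2$-connected (being equal to the $2$-connected $G/Z$ up to operations that preserve $2$-connectivity on a type 3 graph), the only way $H$ can fail to be $2$-connected is if $e$ creates a $1$-separation or a cut-vertex. I would suppose $H$ has a proper $1$-separation $(A,B)$ and derive a contradiction exactly as in the proof of Lemma \ref{B5}(iii): the separation would restrict to a vertical $2$-separation of $N/Z$, and one shows $e$ lies in the closure of one side, so the separation extends to a vertical $2$-separation of $N/(Z\del e)$ or ultimately of $N$, contradicting that $N$ is vertically $3$-connected (Lemma \ref{Nvert3}). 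The key point that makes $e$ fall into a closure is that in the type 3 graph $G/Z$ no subset behaves like a balloon or line straddling the separation, so $e$'s ends cannot create genuinely new connectivity.

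\textbf{Vertical $3$-connectivity of $N/Z$.} Finally, once $H$ is a $2$-connected type 3 bicircular representation of $N/Z$, I would invoke the relationship between graph connectivity and matroid connectivity for bicircular matroids developed in Section \ref{secconsep}. A $2$-connected graph with minimum degree at least three (which a type 3 graph of this rank has, after standardizing away deficient vertices using Lemma \ref{frankie} / Lemma \ref{deficientdan}) represents an essentially $3$-connected matroid with no non-trivial series class, and by the remark following Lemma \ref{bobby} this is exactly vertical $3$-connectivity. The one obstacle I anticipate is bookkeeping the deficient vertices of $H$: I would use Lemma \ref{deficientdan} to bound them and the fact that $N/Z$ has no small series class (contraction of a type 3 graph keeps it type 3, hence free of lines and balloons) to conclude $N/Z$ has no non-trivial series class, giving vertical $3$-connectivity. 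The hard part throughout is ensuring that re-inserting $e$ into $H$ does not secretly introduce a balloon, line, or low-connectivity separation; this is controlled by pushing any such separation up to a forbidden separation of $N$ itself.
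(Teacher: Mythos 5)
Your overall skeleton (identify $H \del e$ with $G/Z$ via Lemma \ref{duckduck}, get type 3 from Proposition \ref{sym29}, then a connectivity contradiction) matches the paper's, but two of your steps contain genuine errors. First, the type 3 argument is misapplied. You assert that $N/Z$ is a contraction-minor of $N \del e$; it is not, since $N/Z$ still contains the element $e$. Moreover, Proposition \ref{sym29} propagates type 3 \emph{upward}, from a minor to the matroid containing it, so even if that containment held, knowing $N \del e$ is type 3 would say nothing about $N/Z$. The correct application goes the other way and is already available from your setup: $H \del e = G/Z$ up to replacement is type 3, and $N/Z \del e$ is a connected minor of $N/Z$ of rank at least five, so Proposition \ref{sym29} applied with $M' = N/Z \del e$ and $M = N/Z$ gives that $N/Z$, hence $H$, is type 3.

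Second, and more seriously, your 2-connectivity step rests on a lifting that fails. You propose to take a proper 1-separation of $H$, view it as a vertical 2-separation of $N/Z$, and extend it ``ultimately to $N$'' exactly as in Lemma \ref{B5}(iii). But Lemma \ref{B5} lifts separations across a \emph{deletion}: if $(Q,R)$ is a 2-separation of $N \del f$ and $f \in \cl_N(Q)$, then $(Q \cup f, R)$ is a 2-separation of $N$, which is immediate. Here the minor is a \emph{contraction}, and a vertical 2-separation of $N/Z$ does not in general lift to one of $N$: if some $z \in Z$ lies in the closure (in $N$) of both sides, the connectivity increases when $z$ is reinstated. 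Indeed, contracting elements lying in the guts of a separation is exactly the mechanism by which contraction destroys vertical 3-connectivity, so any proof must exploit the graph-theoretic hypotheses on $G/Z$ rather than pure matroid lifting. The paper's proof stays entirely inside $N/Z$: neither side of the 1-separation of $H$ is a parallel class, and $N/Z$ has no non-trivial series class (cocircuits of $N/Z$ are cocircuits of $N$, and the vertically 3-connected matroid $N$ has no 2-element cocircuit), so $(A,B)$ is an \emph{essential} 2-separation of $N/Z$; since $N/Z \del e$ is essentially 3-connected, Proposition \ref{cam}(ii) forces $A - e$ to be a balloon of $H \del e = G/Z$, contradicting that $G/Z$ is 2-connected. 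Note also that your justification that $N/Z$ has no non-trivial series class (``type 3 hence free of lines and balloons'') is not sound, since a 2-connected graph can still contain lines; the cocircuit fact just quoted is the correct reason, and it is also what makes essential 3-connectivity of $N/Z$ upgrade to vertical 3-connectivity in the final step.
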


\begin{proof} 
Since $G/Z$ is 2-connected, $N \del e / Z$ is essentially 3-connected. 
The matroid $N \del e / Z$ is represented by both $G/Z$ and $H \del e$. 
Thus by Lemma \ref{duckduck}, $G/Z = H \del e$ up to rolling, rotation, 
replacement, and rearrangement. 
Since $G/Z$ is type 3, rolling and rotation are 
irrelevant, and $G/Z = H \del e$ up to replacement 
{and rearrangement}. 
{Thus $H \del e$ is type 3, and so}
$H$ is type 3. 

Now suppose $H$ has a proper 1-separation $(A,B)$, say with $V(A) \cap V(B) = \{x\}$ and $e \in A$. 
Then neither $A$ nor $B$ is a parallel class of $N/Z$. 
Since $N/Z$ has no non-trivial series class, 
$(A,B)$ is an essential 2-separation of $N/Z$. 
Since $N/Z \del e$ is essentially 3-connected, by Proposition \ref{cam}, $A-e$ is a balloon in $H \del e$. 
But $H \del e = G/Z$ up to replacement and rearrangement, and $G/Z$ is 2-connected, so this is impossible. 
Thus $H$ is 2-connected. 
Since $N/Z$ is represented by a 2-connected graph, $N/Z$ is essentially 3-connected. 
But $N/Z$ has no non-trivial series class, so $N/Z$ is vertically 3-connected. 
\end{proof}

Our main objective for this section now follows from Lemmas \ref{bill} and \ref{B6}: 

\begin{lem} \label{billbill} 
\addcontentsline{toc}{subsubsection}{finding contractible edges in $N$}
Let $N$ be an excluded minor for the class of bicircular matroids, of rank at least 
{ten}, and let $e$ be an element such that $N \del e$ is essentially 3-connected and represented by a 2-connected type 3 graph $G$. 
Then there is a bicircular twin $M$ for $N$ relative to a set $X$ of size three, represented by a graph $H$ in which $|V_H(X)| \geq 5$, 
{no element in $X$ is a loop in $H$,} 
and such that for every subset $Z \subseteq X$, 
$N/Z$ is vertically 3-connected and $H/Z$ is 2-connected and type 3. 
\end{lem}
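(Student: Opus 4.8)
The plan is to assemble the two preceding lemmas, the only genuinely new work being the construction of the twin $M$ and its representation $H$. First I would apply Lemma \ref{bill} to the 2-connected type 3 representation $G$ of $N \del e$ to obtain three edges $X = \{a,b,c\} \subseteq E(G)$ spanning at least five vertices such that $G/Z$ is 2-connected and type 3 for every $Z \subseteq X$. Since $X \subseteq E(G)$ and $e \notin X$, any graph $H$ obtained from $G$ by adding $e$ satisfies $V_H(X) = V_G(X)$, so the requirement $|V_H(X)| \geq 5$ is immediate. Note also that $N$ has rank at least eight and $|Z| \leq 3$, so every $N/Z$ has rank at least five.

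Next I would build $H$ by deciding where to attach $e$. For each $Z \subseteq X$ the matroid $N/Z$ is a proper minor of the excluded minor $N$, hence bicircular; by Lemma \ref{B6} it is vertically 3-connected and has a 2-connected type 3 representation $H_Z$ with $H_Z \del e = G/Z$ up to replacement. Because $N/Z$ is vertically 3-connected, type 3, and of rank at least five, Theorem \ref{duck} makes $H_Z$ unique, so the pair of ends of $e$ in $H_Z$ is a well-defined pair of vertices of $G/Z$. I would then lift these prescriptions back to $G$: an end of $e$ avoiding the contracted vertices is a genuine vertex of $G$, while an end landing on a contracted vertex is resolved to one of the two ends of the corresponding edge of $X$ by reading it off from a contraction $N/Z$ that does not contract that edge. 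Setting $H = G + e$ with the resulting pair of ends and $M = B(H)$, one has $H \del e = G$ and $H/Z = H_Z$ up to replacement for every $Z$; since replacement preserves the represented matroid and $X$ is a matching (so each contraction is of an edge with distinct ends), this gives $M \del e = B(G) = N \del e$ and $M/Z = B(H/Z) = B(H_Z) = N/Z$. In particular $M/a = N/a$ for each $a \in X$, so $M$ is a bicircular twin for $N$ relative to $X$.

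The remaining conclusions are then read off directly from the cited lemmas. For each $Z \subseteq X$, Lemma \ref{B6} gives that $N/Z$ is vertically 3-connected, and since $H/Z$ is, up to replacement, the representation $H_Z$ of $N/Z$, it is 2-connected and type 3.

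The step I expect to be the main obstacle is showing that the attachments of $e$ prescribed by the various contractions $N/Z$ lift consistently to a single pair of ends in $G$. The resolution rests on $X$ being a matching: each vertex of $G$ is incident to at most one edge of $X$, so an end of $e$ can be ambiguous only between the two ends of a single $a \in X$, and only for the one contraction $N/a$; that ambiguity is then pinned down by any contraction $N/Z$ with $a \notin Z$. One must check that these pinned-down choices agree across all $Z$ and that the ``up to replacement'' slack never interferes with the identification of the ends of $e$ — both routine once the matching property is invoked, since every $H_Z$ restricts to $G/Z$ and refines the fully contracted representation $H_X$.
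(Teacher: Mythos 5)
Your overall strategy---apply Lemma \ref{bill} to get $X$, use Lemma \ref{B6} together with uniqueness of representations (Theorem \ref{duck}) to pin down the ends of $e$ in each $H_Z$, then lift these back through the contraction maps $\tau_z$ to decide where to attach $e$ in $G$---is exactly the paper's strategy. But there is a genuine gap at precisely the point you flagged as the main obstacle: your resolution ``rests on $X$ being a matching,'' and Lemma \ref{bill} does not give you a matching. It only gives three edges with $|V_G(X)| \geq 5$, which allows two of them to be adjacent. This is not a removable defect of the lemma: the triple produced by Proposition \ref{baritone} (invoked inside Lemma \ref{bill} when a contractible matching fails to stay type 3) is in general not a matching, and in the case $\si(G) \cong K_{2,n}$ or $K_{2,n}'$---which Lemma \ref{bill} must and does handle---a three-edge matching of non-loop edges does not exist at all, since every edge is incident to one of the two high-degree vertices. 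So the matching hypothesis cannot be recovered by a cleverer choice of $X$; the adjacent-pair configuration is unavoidable.

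That configuration is exactly where the paper's proof does most of its case work, and your argument has no mechanism for it. If $a$ and $b$ are adjacent and $e$ appears in $H_a$ with the contracted vertex $\bar{u}^a$ as an end (the cases $|\tau_a^{-1}(e)| = 2$ or $3$), the ambiguity cannot be ``pinned down by any contraction $N/Z$ with $a \notin Z$'' as you claim, because in $H_b$ the edge $e$ may in turn involve $\bar{u}^b$, and in $H_{ab}$ the two contracted vertices coincide ($\bar{u}^a = \bar{u}^b$), so $H_{ab}$ cannot disambiguate either. The paper resolves this by analysing whether $e$ is a loop of $H_{ab}$, concluding in one subcase that $e$ must be added to $G$ in parallel with $b$, and in others reading one end of $e$ from $H_a$ and the other from $H_b$; consistency of the resulting $H$ with all $H_Z$ is then checked against this explicit placement. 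Without that case analysis your construction of $H$, and hence of the twin $M$, is incomplete in exactly the cases where Lemma \ref{bill} delivers a non-matching triple.
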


\begin{proof} 
By Lemma \ref{bill}, there 
{is a set $X$ of}
three 
{non-loop} 
edges $a,b,c \in E(G)$, spanning at least five vertices, such that for every subset $Z \subseteq \{a,b,c\}$, $G/Z$ is 2-connected and type 3. 
Thus for each subset $Z \subseteq \{a,b,c\}$, $N \del e /Z$ is essentially 3-connected
{and}
type 3, and so 
by Lemma \ref{duckduck} is represented uniquely up to replacement 
{and rearrangement} 
by $G/Z$. 
For each subset $Z \subseteq \{a,b,c\}$, let $H_Z$ be a graph representing $N/Z$. 
{Write $H_a$ for $H_{\{a\}}$, $H_{ab}$ for $H_{\{a,b\}}$, and so on.}
By Lemma \ref{B6}, $N/Z$ is vertically 3-connected and $H_Z$ is 2-connected and type 3. 
{Since $N/Z$ has rank more than five,}
by Theorem \ref{duck},  
$N/Z$ is uniquely represented by $H_Z$. 
In fact, $N/Z$ has rank at least seven, so $H_Z$ has at least seven vertices. Because $N/Z$ has no non-trivial series class, $H_Z$ has no deficient vertices, and so $H_Z \del e$ has at most two deficient vertices. Every graph in $\Ggg$ with at least seven vertices has at least three deficient vertices, so $H_Z \del e$ is not in $\Ggg$, and so $H_Z \del e$ cannot be obtained from $G/Z$ as a rearrangement. 
Thus for each $Z \subseteq \{a,b,c\}$, $G/Z = H_Z \del e$, up to replacement. 

For each edge $z \in \{a,b,c\}$ the contraction $G/z$ induces a map $V(G) \to V(G/z)$ in the obvious way: each vertex that is not an end of $z$ maps to itself, and each vertex that is an end of $z$ maps to the new vertex resulting from the identification of the ends of $z$ in the contraction operation. 
Let us denote the new vertex of $G/z$ resulting from the contraction of the edge $z$ by $\bar{u}^{z}$. 
Then for every vertex $x \in V(G)$, $x \mapsto \bar{u}^z$ if and only if $x$ is incident with $z$. 
For each $z \in \{a,b,c\}$, let us denote by $\tau_z \: V(G) \to V(G/z)$ this map defined by contracting $z$ in $G$. 
For each $z \in \{a,b,c\}$, $G/z = H_z \del e$
{up to replacement}. 
Let us denote by $V_z(e) = \{v_z, v_z'\}$ the set of vertices of $G/z$ that are the ends of $e$ in $H_z$ (where possibly $v_z = v_z'$ if $e$ is a loop). 
{(There are at most two internal vertices of lines in these graphs, so the possibility of replacements does not cause any issue here.)}
{When contracting an arbitrary subset $Z$ of $X$, we may compose these contraction maps in the obvious way to obtain a ``vertex contraction map" $\tau_Z \: V(G) \to V(G/Z)$. 
Again, write $\tau_a$ for $\tau_{\{a\}}$, $\tau_{ab}$ for $\tau_{\{a,b\}}$, and so on. 
Denote by $V_Z(e)$ the set of vertices of $G/Z$ that are ends of $e$ in $H_Z$.}
Then 
{$\tau_Z\inv(V_{Z}(e))$}
is a set of vertices of $G$; let us write 
{$\tau_Z\inv(e)$}
for short to denote the inverse image under 
{$\tau_Z$}
of the ends of $e$ in 
{$G/Z$}. 
Note that for each $z \in \{a,b,c\}$, $1 \leq |\tau_z\inv(e)| \leq 3$: 
if $e$ is a loop incident to a vertex $x \in V(H_z)$ that is not the new vertex $\bar{u}^z \in V(G/z)$ resulting from the contraction of $z$, then $\tau_z\inv(e) = \{x\}$. 
If $e$ has a pair of distinct ends $x,y \in V(H_z)$, neither of which are the new vertex $\bar{u}^z$, then $\tau_z\inv(e) = \{x,y\}$; if $e$ is a loop incident to $\bar{u}^z$ then $\tau_z\inv(e)$ is the pair of ends of $z$ in $G$. 
If $e$ has $\bar{u}^z$ as one end and a different vertex $x \in V(H_z)$ as its other end, then $\tau_z\inv(e) = \{v_1,v_2,x\}$, where $v_1, v_2$ are the ends of $z$ in $G$. 
{Observe further that $1 \leq |\tau_{abc}\inv(e)| \leq 5$, because $G/\{a,b,c\} = H_{abc} \del e$ up to replacement, and just two of the edges $a$, $b$, $c$ may share a common endpoint in $G$.
Moreover, because for each $Z \subseteq \{a,b,c\}$ the matroid $N/Z$ is uniquely represented by $H_Z$, we have  \[ H_a/\{b,c\} = H_b/\{a,c\} = H_c/\{a,b\} = H_{ab}/c = H_{ac}/b = H_{bc}/a = H_{abc}. \]
Thus, as $e$ has exactly one or two endpoints in each of these graphs, it directly follows that 
\[ 1 \leq | \tau_a\inv(e) \cap \tau_b\inv(e) \cap \tau_c\inv(e) | \leq 2.\]
If $\tau_a\inv(e) \cap \tau_b\inv(e) \cap \tau_c\inv(e) = \{x\}$, then let $H$ be the graph obtained by adding $e$ to $G$ as a loop incident to $x$. 
If $\tau_a\inv(e) \cap \tau_b\inv(e) \cap \tau_c\inv(e) = \{x,y\}$, then let $H$ be the graph obtained by adding $e$ as an edge to $G$ with endpoints $x$ and $y$. 
}

\begin{claim} 
$B(H)$ is our required twin for $N$. 
\end{claim}

The claim follows immediately from the fact that 
for each subset $Z \subseteq \{a,b,c\}$, $H/Z = H_Z$. 
\end{proof}

\subsection{Types 1 \& 2} 

We now show that if $N$ is an excluded minor for the class of bicircular matroids, and has rank at least 
{eight}, then $N$ has an element $e$ such that $N \del e$ is essentially 3-connected. 
We use the following version of Bixby's Lemma. 

\begin{thm}[Bixby's Lemma] \hypertarget{bixby} 
Let $e$ be an element of a vertically 3-connected matroid $M$. 
Then either $M \del e$ or $M/e$ is essentially 3-connected. 
\end{thm}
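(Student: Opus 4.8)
The plan is to run the classical uncrossing proof of Bixby's Lemma, translated into the paper's vertical/essential connectivity. First I would clear away the trivial cases. Since $M$ is vertically 3-connected it is connected aside from loops and has no coloops, so I may assume $e$ is neither a loop nor a coloop; otherwise $M \del e$ or $M/e$ inherits vertical 3-connectivity outright. Writing $\lambda_M$, $\lambda_{M\del e}$, $\lambda_{M/e}$ for the connectivity functions and fixing any partition $(X,Y)$ of $E-e$, a one-line rank computation (using $r(E)=r(E-e)$ and $r(\{e\})=1$) yields the key identity
\[
\lambda_{M\del e}(X)+\lambda_{M/e}(X)=\lambda_M(X)+\lambda_M(X\cup e)-1,
\]
together with the bookkeeping relations $\lambda_M(X)=\lambda_{M\del e}(X)+[r(Y\cup e)-r(Y)]$ and $\lambda_M(X\cup e)=\lambda_{M\del e}(X)+[r(X\cup e)-r(X)]$, and their analogues on the contraction side. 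These tie each value of $\lambda_M$ to the corresponding deletion or contraction value plus a $0/1$ correction recording whether $e$ lies in the closure of the relevant side.

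I would then argue by contradiction: suppose neither $M\del e$ nor $M/e$ is essentially 3-connected. By definition each admits an essential 2-separation, say $(A,B)$ of $M\del e$ and $(C,D)$ of $M/e$, with $\lambda_{M\del e}(A)=\lambda_{M/e}(C)=1$ and no side all-parallel or all-series. I would uncross these inside $M$ using submodularity of $\lambda_M$, applied to a suitable pair drawn from $\{A,A\cup e\}$ and $\{C,C\cup e\}$, where the placement of $e$ is dictated by the closure corrections above. The aim is to extract a corner set $Z$ with $\lambda_M(Z)\le 1$, with $|Z|,|E-Z|\ge 2$, and with $r(Z),r(E-Z)\ge 2$ — that is, a vertical 2-separation of $M$, contradicting its vertical 3-connectivity.

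The main obstacle is exactly the bookkeeping around $e$ and the degeneration of corners. Because $e$ may lie in the closure of both sides of a separation, the naive choice of representative need not have $\lambda_M$-value $1$, so I expect a genuine case analysis on the memberships $e\in\cl A$, $\cl B$, $\cl C$, $\cl D$ (constrained by $e\in\cl(E-e)$), in order to locate a corner that actually certifies a $1$-separation of $M$. The remaining danger is that this corner is empty, a single element, or collapses to a parallel or series class, making the separation of $M$ trivial rather than vertical; here essentiality does the work. A corner of rank at most $1$ forces the matching side of $(A,B)$ or $(C,D)$ to be all-parallel, and the dual degeneration forces an all-series side, each contradicting the choice of an essential separation; the no-non-trivial-series-class half of the vertical 3-connectivity of $M$ disposes of the boundary cases. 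Once every degeneration is excluded, $Z$ is a bona fide vertical 2-separation and the contradiction is complete. As a shorter alternative one could note that $\si(M)$ is 3-connected and invoke the classical Bixby Lemma, since the paper's notion of essentially 3-connected is implied by either $\co(\cdot)$ or $\si(\cdot)$ being 3-connected; but transferring back through the parallel classes of $M$, and the case where $e$ lies in a non-trivial parallel class, still need separate handling, so little casework is actually saved.
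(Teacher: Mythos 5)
A point you could not have known in a blind attempt: the paper never proves this statement. It is quoted, with attribution built into its name, as a known result --- a vertical-/essential-connectivity variant of Bixby's Lemma (classically: if $M$ is 3-connected, then $\co(M \del e)$ or $\si(M/e)$ is 3-connected; Oxley, Lemma 8.7.3) --- and is used as a black box. So the only question is whether your argument stands on its own. Your preparation is correct: the reduction to $e$ neither a loop nor a coloop is harmless, and your displayed identity (which holds exactly under that reduction), together with the $0/1$ closure corrections, is the right engine for an uncrossing proof.

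The gap is that everything after that setup is a plan, not a proof. The case analysis you defer is the entire content of the lemma, and in this setting it is strictly harder than the textbook version, because a vertically 3-connected $M$ may contain nontrivial parallel classes: a corner $Z$ with $\lambda_M(Z)\le 1$ and $|Z|,|E-Z|\ge 2$ is not yet a contradiction, since you also need $r(Z), r(E-Z)\ge 2$, and your one-sentence mechanism for securing this (``a corner of rank at most $1$ forces the matching side of $(A,B)$ or $(C,D)$ to be all-parallel'') is asserted rather than proved --- the low rank of an intersection such as $A\cap C$ does not by itself constrain all of $A$ or all of $C$. Ironically, the alternative you mention and dismiss in your last sentence is the clean, complete proof, and it saves essentially all of the casework rather than little of it: vertical 3-connectivity of $M$ is equivalent to 3-connectivity of $\si(M)$. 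If $e$ lies in a nontrivial parallel class, then $\si(M \del e)\cong\si(M)$, so $M \del e$ is vertically, hence essentially, 3-connected, and we are done. Otherwise $e$ survives in $\si(M)$; the classical Bixby's Lemma applied to $\si(M)$ gives that $\co(\si(M)\del e)$ or $\si(\si(M)/e)$ is 3-connected, and these conclusions transfer back via $\si(M)\del e=\si(M\del e)$, $\si(\si(M)/e)\cong\si(M/e)$, and the standard commutation $\co(\si(N))\cong\si(\co(N))$: in the first case $\si(\co(M\del e))$ is 3-connected, so $\co(M\del e)$ is vertically 3-connected, i.e.\ $M\del e$ is essentially 3-connected; in the second case $M/e$ is vertically, hence essentially, 3-connected. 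I would recommend writing up that short derivation; finishing your uncrossing argument honestly would mean reproducing, and then extending, the full classical case analysis.
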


Let $N$ be an excluded minor for the class of bicircular matroids. 
Our goal is to find a triple of elements $X = \{a,b,c\}$ such that for every subset $Z \subseteq X$, $N/Z$ is vertically 3-connected. 
If $N$ has an element $e$ such that $N \del e$ is essentially 3-connected and type 3, then by Lemmas \ref{eggbert} and \ref{billbill} we are done. 
So suppose $N$ has no such element, and 
let $e \in E(N)$. 
Then either 
\begin{enumerate}
\item $N \del e$ is essentially 3-connected and type 1 or type 2, or 
\item $N \del e$ has an essential 2-separation. 
\end{enumerate}  
We show that the second case does not occur. 

\begin{lem}  \label{zevon} 
Let $N$ be an excluded minor for the class of bicircular matroids, of rank at least eight. 
Let $e \in E(N)$ and assume 
$N/e$ is essentially 3-connected. 
Then $N$ has an element $f$ such that $N \del f$ is essentially 3-connected. 
Moreover, if $N/e$ is type 3, then $N$ has an element $f$ such that $N \del f$ is essentially 3-connected and type 3. 
\end{lem}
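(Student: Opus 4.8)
The plan is to reduce everything to the contraction, which we \emph{do} understand, and then lift a single deletion back to $N$. Since $N$ is an excluded minor, $N/e$ is bicircular, and by hypothesis it is essentially $3$-connected of rank at least seven; fix a bicircular representation $G$ of $N/e$. Passing to $\co(G)$, which represents $\co(N/e)$, we obtain a $2$-connected graph of minimum degree at least three. The elements of $N$ other than $e$ are exactly the edges of $G$, and for each such edge $f$ we have the key identity $(N\del f)/e=(N/e)\del f=B(G\del f)$. I would first choose $f$ to be \emph{good}, meaning that $B(G\del f)$ is again essentially $3$-connected: such $f$ exist because $\co(G)$ is $2$-connected of minimum degree three (apply Proposition \ref{xavier} to find a deletion preserving $2$-connectivity, and Lemma \ref{bobby} to check that the resulting bicircular matroid has no essential $2$-separation), and the large rank leaves enough edges to also avoid any prescribed bounded set. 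For the \textbf{moreover} clause I would additionally demand that $f$ keep $B(G\del f)$ type $3$, which is possible for all but a controlled family of edges when $N/e$ is type $3$ of large rank.

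The heart of the argument is then to show that for a good $f$ the matroid $N\del f$ is essentially $3$-connected. Suppose not, and let $(A,B)$ be an essential $2$-separation of $N\del f$ with $e\in A$. Contracting $e$ gives two cases. If $e\in\cl_{N\del f}(B)$, the rank computation shows $(A-e,B)$ has connectivity $0$ in $B(G\del f)$; as this matroid is connected, $A-e$ must consist of loops, i.e.\ elements parallel to $e$, so by Lemma \ref{Nvert3} (parallel classes have size at most two) $A$ is a parallel pair and $(A,B)$ is \emph{not} essential, a contradiction. Otherwise $e\notin\cl_{N\del f}(B)$ and $(A-e,B)$ is a genuine $2$-separation of the essentially $3$-connected matroid $B(G\del f)$, hence non-essential: by Proposition \ref{useful} one side is a line, a balloon, or a bundle of parallel loops of $G\del f$. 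A complementary rank comparison in $N$ shows that if $f$ lay in $\cl_N(A)$ or in $\cl_N(B)$, then $(A\cup\{f\},B)$ or $(A,B\cup\{f\})$ would be a vertical $2$-separation of $N$, contradicting Lemma \ref{Nvert3}; so $f$ must bridge the two sides. The remaining task is to rule out this residual configuration, in which $e$ sits against a line, balloon, or loop-bundle of $G\del f$ while $f$ bridges the separation. This is the \textbf{main obstacle}: I expect to spend the bulk of the work, and the full strength of the rank hypothesis (at least eight), on choosing $f$ so that no such interaction among $e$, $f$, and the (localized, few) series and parallel classes of $G$ can survive, thereby forbidding this last case.

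Once $N\del f$ is known to be essentially $3$-connected, the type-$3$ transfer is immediate. For the \textbf{moreover} clause, $f$ was chosen so that $(N\del f)/e=B(G\del f)$ is type $3$ of rank at least six; since $N\del f$ is bicircular (a proper minor of the excluded minor $N$) and connected, and $(N\del f)/e$ is a connected type-$3$ minor of rank at least five, Proposition \ref{sym29} forces $N\del f$ itself to be type $3$. The rank bound of eight is used throughout to guarantee rank at least seven for $N/e$ and at least five for $B(G\del f)$, so that Lemma \ref{duckduck}, Theorem \ref{duck}, and Proposition \ref{sym29} all apply, and so that the cosimplification of $N/e$ is genuinely a $2$-connected graph of minimum degree three with room to spare.
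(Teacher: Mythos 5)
Your proposal is not a complete proof: the case you label the \textbf{main obstacle} --- an essential $2$-separation $(A,B)$ of $N\del f$ in which $f$ is spanned by neither $\cl_N(A)$ nor $\cl_N(B)$ --- is exactly the content of the lemma, and you offer no argument for excluding it, only the expectation that a careful choice of $f$ will do so. The reductions you do carry out (the case $e\in\cl_{N\del f}(B)$ forcing $A$ to be a parallel pair via Lemma \ref{Nvert3}, and the extension argument showing $f$ cannot lie in the closure of either side) are sound, but they only isolate the hard case; they do not resolve it. The paper's proof shows that this difficulty is not overcome by a cleverer choice of a single element at all, but by a different device: it deletes a \emph{pair} of edges $f,f'$, chosen via Proposition \ref{xavier} so that $N/e\del\{f,f'\}$ is essentially $3$-connected. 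Then any essential $2$-separation of $N\del\{f,f'\}$ is unique, and since $N$ is vertically $3$-connected, $f$ and $f'$ cannot both be spanned by the sides of that separation; hence one of them, say $f'$, bridges it, and re-inserting $f'$ destroys the separation, so $N\del f$ is essentially $3$-connected. In other words, the bridging configuration that blocks your single-element argument is turned into the engine of the paper's two-element pigeonhole argument. Without some substitute for this step, your proof does not go through.

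The \textbf{moreover} clause inherits the same problem and adds one. You assume $f$ can be chosen so that, in addition to the (unestablished) property that $N\del f$ is essentially $3$-connected, the graph $G\del f$ remains type $3$; granting both, your application of Proposition \ref{sym29} (via the uniqueness consequence of Theorem \ref{goose}, so that $B(G\del f)$ is a type-$3$ matroid of rank at least five) is correct. But in the paper's argument you do not get to dictate which of $f,f'$ ends up being the good element, so type-$3$-ness of $G\del f$ cannot simply be imposed as a side condition. The paper instead confronts the possibility that $N/e$ is type $3$ while $N\del f$ is type $1$ or $2$: it uses the apex structure of a type $1$ or $2$ representation $H$ of $N\del f$, together with $H/e = G\del f$, to locate a new edge $f''$ (incident to the apex) for which $G\del f''$ is $2$-connected and type $3$, and then proves essential $3$-connectivity of $N\del f''$ directly, ruling out bridging for this particular $f''$ by Proposition \ref{cam} and the extension-to-$N$ argument. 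So the type-$3$ transfer is not ``immediate''; it requires a second, structurally specific element selection that your proposal does not supply.
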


\begin{proof} 
Let $G$ be a graph representing $N/e$. 
Since $N/e$ is essentially 3-connected with no non-trivial series class, $G$ is 2-connected with minimum degree at least 3. 
If $G$ has a 2-separation, then let $(A,B)$ and $(C,D)$ be proper 2-separations of $G$ with $A \subseteq C$ and $D \subseteq B$ with $A$ and $D$ minimal. 
By Proposition \ref{xavier}, there are edges $f \in A$ and $f' \in D$ such that $G \del \{f,f\}$ is 2-connected. 
If, on the other hand, $G$ is 3-connected, then choose an edge $f \in E(G)$. 
If $G \del f$ remains 3-connected, choose a second edge $f'$: $G \del \{f,f'\}$ is 2-connected. 
If $G \del f$ has a 2-separation, again applying Proposition \ref{xavier}, $G \del f$ has an edge $f'$ such that $G \del \{f,f'\}$ remains 2-connected. 
Thus $N /e \del \{f,f\}$ is essentially 3-connected. 

If $N \del \{f,f'\}$ is essentially 3-connected, then $N \del f$ is essentially 3-connected. 
Otherwise, as $N \del \{f,f'\} /e$ is essentially 3-connected, $N \del \{f,f'\}$ has a unique essential 2-separation $(A,B)$. 
If 
{each}
of $f$ and $f'$ were spanned by one side of the separation $(A,B)$, then $(A,B)$ would extend to a 2-separation of $N$. 
Since $N$ is vertically 3-connected, this does not occur. 
Thus one of $f$ or $f'$, without loss of generality let us say $f'$, is not spanned by $A$ nor by $B$. 
Therefore $N \del f$ is essentially 3-connected. 

Now if $N/e$ if type 1 or 2, or if $N/e$ is type 3 and also $N \del f$ is type 3, we are done. 
So assume that $N/e$ is type 3 but $N \del f$ is type 1 or type 2. 
Suppose first $N \del f$ has a type 1 representation $H$. 
{As $\Ggg$ contains no type 1 graphs, and $H/e$ remains type 1, $H/e \notin \Ggg$.} 
The graph $G \del f$ is 2-connected and has at most two vertices of degree 2. 
Let $v$ be the apex vertex of $H$. 
Since after possibly rolling and replacement, $H/e = G \del f$, $G \del f$ is type 1 with apex $v$, and $f$ does not have $v$ as an end in $G$ (else $G$ would already be type 1, but $G$ is type 3). 
The subgraph $(G-v) \del f$ has at least six vertices and at least two leaves, and at most two vertices that have degree two in $G \del f$. 
If $(G-v) \del f$ has just two leaves and both have degree 2 in $G \del f$, then each non-leaf vertex has an incident edge whose other end is $v$: let $f''$ be such an edge. 
Otherwise, $(G-v) \del f$ has a leaf $x$ whose degree in $G$ is at least 3, and so has at least two $x$-$v$ edges or an incident loop: let $f''$ be one of these $x$-$v$ edges or such a loop. 
In either case, $G \del f''$ is 2-connected and type 3. 
Let $W$ be the set of non-loop edges of $G$ other than $f''$; observe that $f''$ is contained in the closure of $W$ in $N/e$. 
We claim that $N \del f''$ is essentially 3-connected and type 3. 
For let $K$ be a graph representing $N \del f''$. 
Both $K/e$ and $G \del f''$ represent $N/e \del f''$; 
since $G \del f''$ has at least seven vertices and at most two deficient vertices, $G \del f''$ is not in $\Ggg$. 
Thus $K/e = G \del f''$ up to replacement, 
{so}
$K/e$ is type 3, and so $K$ is type 3 
(by Proposition \ref{sym29}). 
But suppose $N \del f''$ has an essential 2-separation $(A,B)$, say with $e \in A$. 
{Let $K/e$ have a compatible vertex labelling with $G \del f''$.} 
Since $G \del f''$ is 2-connected, $N \del f'' /e$ is essentially 3-connected. 
Thus by Proposition \ref{cam}, $K[A]$ consists of a pendant set of lines possibly together with a set of balloons, where $e$ is the single edge in a trivial line of this pendant set. 
Thus in $K/e$, the subgraph induced by $A-e$ consists of a set of balloons incident to a common vertex of attachment. 
As $K/e = G \del f''$ up to replacement, and $G \del f''$ is 2-connected, this implies that $A-e$ consists of a set of loops incident to a common vertex in $K/e$. 
Recall that $W$ is the set of non-loop edges of $G$ other than $f''$. So $W \subseteq B$ and, as $f'' \in \cl_{N/e}(W)$, also $f'' \in \cl_N(W)$. 
Thus $f'' \in \cl_N(B)$, which implies that $(A, B \cup f'')$ is an essential 2-separation of $N$. 
But this contradicts Lemma \ref{Nvert3}, so $N \del f''$ is essentially 3-connected. 

The case that $N \del f$ is type 2 is similar to the previous case. 
Let $H$ be a type 2 graph representation of $N \del f$, with apex $v$ and rotation vertex $u$. 
{Then $H/e$ has at least seven vertices and at most two deficient vertices, so cannot be in $\Ggg$. So}
{up} to rolling, rotation, and replacement, $H/e = G \del f$. 
{Let $H/e$ and $G \del f$ have compatible vertex labelings.}
Thus $G \del f$ is 
{either type 1 or}
type 2 with apex $v$ and
{(if type 2)}
rotation vertex $u$. 
The subgraph $(G-v) \del f$ has at least six vertices, and at least one vertex of degree 1. 
The graph $G \del f$ has at most two vertices of degree 2, with all remaining vertices of degree at least 3. 
Hence we either find a vertex different than $u$ of $(G-v) \del f$ that has at least two incident edges whose other end is $v$, or a vertex different than $u$ that is not pendant in $(G-v) \del f$ but has an incident edge whose other end is $v$. 
Choose such an edge $f'' \neq f$. 
Then $G \del f''$ is 2-connected and type 3, and $f''$ is in the closure in $N/e$ of the set of non-loop edges of $G \del f''$. 
The rest of the argument is similar to that in the previous case. 
Let $K$ be a graph representing $N \del f''$. 
{As before, $G \del f''$ is not in $\Ggg$, so}
$K/e = G \del f''$ up to replacement. 
{Thus}
$K/e$ is type 3, and so, so is $K$ type 3. 
If $N \del f''$ has an essential 2-separation $(A,B)$, with $e \in A$, then 
as in the previous case, we deduce that $f'' \in \cl_N(B)$ 
and so that $(A,B)$ extends to a vertical 2-separation of $N$, a contradiction. 
\end{proof}

To complete this section, we just need to show that we can find our desired triple of elements $a,b,c$, in the case that $N$ is an excluded minor of sufficient rank that does not have an element $e$ for which either $N \del e$ or $N/e$ is essentially 3-connected and type 3. 
So let $N$ be such an excluded minor. 
By Lemma \ref{zevon} and the discussion preceding, $N$ has an element $e$ for which $N \del e$ is essentially 3-connected and type 1 or 2. 

%
%
%
%

\begin{lem} \label{guitar} 
\addcontentsline{toc}{subsubsection}{Finding $a,b,c$ in case T1 \& T2}
Let $N$ be an excluded minor for the class of bicircular matroids. 
Assume $N$ has rank 
{at least ten}, and that $N$ does not have an element whose deletion or contraction is essentially 3-connected and type 3. 
Let $e$ be an element such that $N \del e$ is essentially 3-connected.  
Then there is a bicircular twin $M$ for $N$, relative to a set $X$ of size three, represented by a graph $H$ with $|V_H(X)| \geq 5$, 
and such that for every subset $Z \subseteq X$, $H/Z$ is 2-connected, and $N/Z$ is vertically 3-connected. 
\end{lem}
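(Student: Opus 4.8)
The plan is to mirror the proof of Lemma~\ref{billbill}, replacing type~3 by type~1 and type~2 throughout. There is one extra preliminary. In the type~3 case Lemma~\ref{eggbert} supplied a 2-connected representation, but here $N \del e$ is only assumed essentially 3-connected, and its representation $G$ (type~1 or type~2 by hypothesis, with unique apex $v$ by Proposition~\ref{pedro}) may fail to be 2-connected. The only obstruction is a balloon, and since by Proposition~\ref{pedro} the apex is not internal to a balloon while $G - v$ is acyclic in the type~1 case, every balloon must be attached at $v$. Using Lemma~\ref{deficientdan} to bound the number and size of these series classes, together with Propositions~\ref{xavier} and~\ref{B5}, I would first replace $e$ by an element $e'$ for which $N \del e'$ is essentially 3-connected, type~1 or type~2, and represented by a 2-connected graph $G$ --- the type~1/type~2 analogue of the reduction carried out for type~3 in Lemma~\ref{eggbert}.

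With a 2-connected $G$ in hand, I would find $X = \{a,b,c\}$ as in Lemma~\ref{bill}. Let $S$ be the set of contractable edges of $G$. If $G[S]$ has a matching of three edges lying in no line or balloon, take it for $X$; Proposition~\ref{sax} gives $G/Z$ 2-connected for every $Z \subseteq X$, and a three-edge matching spans six vertices, so $|V(X)| \geq 5$. Because type~1 and type~2 are minor-closed, each $G/Z$ is again type~1 or type~2, so no analogue of the type-3-preserving Proposition~\ref{baritone} is required. If no such matching exists then, by Proposition~\ref{marty}, $\si(G)$ is $K_{2,n}$ or $K_{2,n}'$ with $n \geq 7$ (forced by the rank hypothesis), and I would pick the triple directly from the two hubs, the degree-two classes, and the loops and parallel edges dictated by the type~1/type~2 structure, checking the 2-connectivity of each $G/Z$ by hand, exactly as in the exceptional case of Lemma~\ref{bill}.

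For each $Z \subseteq X$, the matroid $N/Z$ is a proper minor of the excluded minor $N$, hence bicircular; let $H_Z$ represent it. I would first choose the $H_Z$ consistently: since $N/Z \del e = N \del e / Z$ is represented by the 2-connected graph $G/Z$ and is essentially 3-connected of rank at least five, Lemma~\ref{duckduck} lets me apply rolling and rotation operations to $H_Z$ so that $H_Z \del e = G/Z$. Granting this, the vertical 3-connectivity of $N/Z$ follows by the argument of Lemma~\ref{B6}: $N$, being vertically 3-connected, has no non-trivial series class, and this is inherited by $N/Z$ (dually, deletion creates no parallel class), so any proper 1-separation of $H_Z$ would be an essential 2-separation; Proposition~\ref{cam}(ii) would then force a balloon in $H_Z \del e = G/Z$, contradicting its 2-connectivity. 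Thus $H_Z$ is 2-connected and $N/Z$ is vertically 3-connected; as $N/Z$ is essentially 3-connected but not type~3 by hypothesis, it is type~1 or type~2.

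Finally I would assemble the twin as $H = G \cup \{e\}$, determining the ends of $e$ in $G$ from the pullbacks $\tau_z\inv(e)$ through the same case analysis as in Lemma~\ref{billbill}; once $H/Z = H_Z$ holds for all $Z \subseteq X$ we get $B(H)/Z = N/Z$, so $B(H)$ is the required twin. The crux, and what I expect to be the main obstacle, is the reconciliation invoked above. In the type~3 setting of Lemma~\ref{billbill} the representations $H_Z$ were literally unique (Theorem~\ref{duck}), so a single placement of $e$ read off from one contraction was automatically correct. Here Theorem~\ref{duck} pins down $H_Z$ only up to rolling and a single rotation, and I must verify that this freedom can be fixed simultaneously --- using the uniqueness of the apex (Propositions~\ref{pedro} and~\ref{wawawawa}) to align the apices, and the identities $H_a/b = H_{ab} = H_b/a$ on the pairwise contractions to pin the rest --- so that one placement of $e$ in $G$ yields all of the aligned $H_Z$ at once.
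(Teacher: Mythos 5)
Your skeleton matches the paper's (reduce to a 2-connected representation, extract a contractable triple $X$, represent each $N/Z$, anchor $e$, assemble the twin), but the step you explicitly defer --- ``I must verify that this freedom can be fixed simultaneously'' --- is not a routine reconciliation; it is where essentially all of the work in the paper's proof lies, and your preliminary reductions do not put you in a position to carry it out. The paper anchors $e$ by proving, for each $Z \subseteq X$, that $H_Z$ is 2-connected, of the \emph{same} type as $G/Z$, with the \emph{same} apex $v$, and with $e$ incident to $v$; only then does the final case analysis on $v_a, v_b, v_c$ (which you correctly anticipate) go through. Two ingredients needed for that are missing from your plan. First, in the type 1 case nothing in the lemma's hypothesis prevents some $N/Z$ from being type 2: the hypothesis only excludes essentially 3-connected type 3 deletions and contractions. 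If $H_Z$ were type 2 while $H_Z \del e = G/Z$ is type 1, the edge $e$ need not attach at the apex of $G/Z$, and your alignment fails. The paper closes this hole with a dedicated claim (Claim \ref{corelli}): if some contraction of $N$ is essentially 3-connected and type 2, then some \emph{deletion} is essentially 3-connected with a 2-connected type 2 representation, which throws the whole problem back into the type 2 case; only after this reduction may one assume no contraction is type 2 or type 3 and conclude each $H_Z$ is type 1.

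Second, your assertion that ``no analogue of Proposition \ref{baritone} is required'' because type 1 and type 2 are jointly minor-closed conflates ``not type 3'' with what is actually needed downstream, namely ``same type, same apex.'' Type 2 by itself is \emph{not} preserved under contraction, and the paper's type 2 case explicitly chooses $X$ so that every $G/Z$ remains type 2 (avoiding, e.g., the $v$--$w$ edges near the rotation vertex). This matters because when $G/Z$ is type 2, Lemma \ref{duckduck} gives $H_Z \del e = G/Z$ up to \emph{rotation} and replacement, operations which preserve 2-connectivity; if $G/Z$ degenerated to type 1 the relation would involve \emph{rolling}, which can create a balloon and move the apex. This also undercuts two other steps you take for granted: your claim that Lemma \ref{duckduck} ``lets me apply rolling and rotation operations to $H_Z$ so that $H_Z \del e = G/Z$'' is unjustified (operations on $H_Z \del e$ need not lift to $H_Z$, since $e$ may be incident to internal vertices of the lines being rolled), and your vertical 3-connectivity argument --- Proposition \ref{cam} forces a balloon in $H_Z \del e$, ``contradicting its 2-connectivity'' --- has a gap in the type 1 case, where $H_Z \del e$ equals $G/Z$ only up to rolling and a balloon at the apex is genuinely possible; the paper must instead argue there that the balloon's attachment vertex differs from $v$, hence the balloon is a line of $G$, hence the separation extends to a vertical 2-separation of $N$, contradicting Lemma \ref{Nvert3}.
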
 

\begin{proof} 
Let $G$ be a graph representing $N \del e$. 
Then $G$ is type 1 or type 2. 
By Proposition \ref{pedro}, $G$ has a unique apex $v$, and $v$ is not an internal vertex of a line or balloon of $G$. 
By Lemma \ref{deficientdan}, $N \del e$ has at most two non-trivial series classes, and by Proposition \ref{useful}, the non-trivial series classes of $N \del e$ correspond precisely to the lines and balloons of $G$. 

\bigskip 
\paragraph{Case 1.} \emph{$N$ has an element $e$ such that $N \del e$ is essentially 3-connected and type 2.} 

Assume $G$ is type 2. 
Let $u$ be the rotation vertex of $G$. 

\begin{claim} 
If $G$ is not 2-connected, then $G$ has an element $f$ such that $N \del f$ is essentially 3-connected and represented by a 2-connected type 2 graph. 
\end{claim}

\begin{proof}[Proof of Claim]
By replacement if necessary, we may assume each balloon of $G$ is standard. 
Since $G$ is type 2, every balloon of $G$ has vertex of attachment $v$. 
By Lemma \ref{deficientdan}, if $G$ has two balloons then each has size at most three, while if $G$ has just one balloon it may have size at most four, and if it has size four then $G$ does not have a 
line; moreover, $G$ has at most one line, which may have size at most two. 
Thus there are at least two vertices in the unique component $T$ of $G-\{u,v\}$ that is a tree, and in any case there is an edge $f \in G[V(T) \cup v]$, not contained in a line or balloon of $G$, such that $\co(G) \del f$ is 2-connected. 
Note that since $f$ is not incident to $u$, $G \del f$ remains type 2. 
Let $H$ be a graph representing $N \del f$. 
By Lemma \ref{B5}, $G \del f = H \del e$ up to rotation and replacement, $H$ is 2-connected, $N \del f$ is essentially 3-connected, no balloon of $G$ has size greater than three, and in $H$ the edge $e$ repairs the balloon or balloons of $G$. 
This
implies that $G$ has only one balloon
$S$, as otherwise $H$ would have $v$ as a cut vertex, and that $|S| \leq 3$. 

Further, this implies that $S$ has size two and that $S$ is a pair of parallel edges in $H$: 
for suppose to the contrary that the balloon $S$ of $G$ has size three or that
{the subgraph}
$H[S]$ consists of a pendant edge with a loop. 
If $|S|=3$,
{then}
$H[S]$ must form a standard balloon
{(else, by statement (ii) of Lemma \ref{B5}, in $H$ the edge $e$ must have as endpoints two internal vertices of the balloon $H[S]$; but then $H$ would not be 2-connected, contrary to statement (iii) Lemma \ref{B5})}.
{Thus whether $|S|=3$ or $H[S]$ consists of a single edge and a loop}, 
{because in $H$ the edge $e$ repairs the balloon $S$ of $G$, and by the 2-connectivity of $H$ edge $e$ does not have $v$ as an endpoint,}
$H$ is a type 3 representation of the essentially 3-connected matroid $N \del f$, contrary to assumption. 

Since $G \del f$ is type 2, so is $H \del e$ type 2; 
since $H[S]$ is a pair of parallel edges both incident to $v$, so also $H$ is type 2. 
\end{proof}

By the previous claim, we may now assume that $G$ is 2-connected. 

\begin{claim} 
There is a triple of edges $X = \{a,b,c\}$ in $G$, none of which are loops, 
spanning at least five vertices such that $G/Z$ is 2-connected and type 2, for every subset $Z \subseteq X$. 
\end{claim}

\begin{proof}[Proof of Claim]
Let $w$ be the non-rotation vertex incident to exactly two of the rotation lines (or edges) of $G$. 
Since $G$ is 2-connected and $G-\{u,v\}$ is a tree, if $G-u$ is not 2-connected, then $w$ has degree 1 in $G-u$. 
If this is the case, then either $G-\{u,w\}$ is 2-connected or there is a line $L$, by Lemma \ref{deficientdan} of length at most three, with ends $w$ and $x \in V(G-\{u,v\})$ such that the graph $G'$ obtained from $G$ by deleting $u$, $w$, and the internal vertices of $L$ is 2-connected. 
Let $a$ be the edge in $G-u$ incident with $w$, and consider the 2-connected graph $G'$
{defined in the previous sentence}
(where if $G-\{u,w\}$ is 2-connected then set $G' = G-\{u,w\}$). 
The graph $G'$ has at least four vertices and $G'-v$ is a tree; clearly we may choose a pair of non-adjacent edges $b,c$ of $G'$ such that $G'/\{b,c\}$ remains 2-connected. 
Let $X = \{a,b,c\}$. 
Then $X$ spans at least five edges in $G$, and $G/Z$ remains 2-connected for each $Z \subseteq X$. 

So now assume $G-u$ is 2-connected. 
Write $G'=G-u$. 
Let $S$ denote the set of contractible edges of $G'$. 
Assuming that $\si(G')$ is 
{neither}
$K_{2,n}$ nor $K_{2,n}'$, $G'[S]$ has a three-edge matching $X = \{a,b,c\}$, by Proposition \ref{marty}. 
By Proposition \ref{sax} $G'/Z$ is 2-connected for every subset $Z \subseteq X$. 
As long as none of $a, b, c$ is an edge linking $w$ and $v$, $G/Z$ also remains 2-connected and is type 2, for each $Z \subseteq X$. 
But suppose one of $a, b, c$ is a $v$-$w$ edge; without loss of generality, suppose $a = vw$. 
Since $G'$ is 2-connected there is
{an} 
edge $a' \in E(G')$ incident to $w$ whose other end is not $v$. 
Let $X' = \{a',b,c\}$. 
This edge $a'$ is contractible {in $G'$}, and unless the three edges $a',b,c$ induce a subgraph containing a $v$-$w$ path, $G/Z$ remains 2-connected for every $Z \subseteq X'$. 
Since none of $a'$, $b$, nor $c$ is incident to $v$, this does not happen. 
Hence $G/Z$ remains 2-connected and type 2 for all $Z \subseteq X'$. 

If $\si(G')$ is $K_{2,n}$ or $K_{2,n}'$, then choose a pair of edges $a,b \in E(G')$ non-incident to $v$ and an edge $c$ incident to $v$ but non-adjacent to $a$ and non-adjacent to $b$. 
Since $|V(G')| \geq 7$, this is clearly possible. 
Set $X = \{a,b,c\}$. 
Clearly $G/Z$ remains 2-connected and type 2 for each $Z \subseteq X$. 
\end{proof}

Let $X$ be a set of three edges as given by the previous claim. 
For each subset $Z$ of $X$, let $H_Z$ be a graph representing $N/Z$. 

\begin{claim} 
For every subset $Z$ of $X$, $H_Z$ is 2-connected and type 2 with apex $v$, $e$ has $v$ as an end, and $N/Z$ is vertically 3-connected.
\end{claim}

\begin{proof}[Proof of Claim]
Since $G/Z$ is 2-connected, $N \del e /Z$ is essentially 3-connected. 
Each of $G/Z$ and $H_Z \del e$ represent $N \del e /Z$. Because $N$ has rank at least ten, $G/Z$ has at least seven vertices. By Lemma \ref{junniper}, $G/Z$ has at most two deficient vertices, so $G/Z$ cannot be in $\Ggg$. Thus 
by Lemma \ref{duckduck}, $G/Z = H_Z \del e$ up to rotation and replacement.
Thus $H_Z \del e$ is type 2 with apex $v$ and is 2-connected up to replacement. 
But no replacement operation applied to a 2-connected graph yields a graph with a 1-separation, so $H_Z \del e$ is 2-connected. 

Suppose for a contradiction that $H_Z$ has a proper 1-separation $(A,B)$, say with $V(A) \cap V(B) = \{x\}$ and $e \in A$. 
Then neither $A$ nor $B$ is a parallel class of $N/Z$, and 
since $N/Z$ has no non-trivial series class, $H_Z$ has neither a balloon nor a line, so 
$(A,B)$ is an essential 2-separation of $N/Z$. 
Since $N/Z \del e$ is essentially 3-connected, by Proposition \ref{cam}, $A-e$ is a balloon of $H_Z \del e$, with vertex of attachment $x$. 
But this contradicts the fact that $H_Z \del e$ is 2-connected. 
Because $H_Z$ is 2-connected, $N/Z$ is essentially 3-connected. 
Since $N/Z$ has no non-trivial series class, $N/Z$ is vertically 3-connected. 

Now suppose for a contradiction that $H_Z$ is not type 2. 
That is, $N/Z$ is type 1 or type 3. 
We have already seen that $H_Z \del e$ is type 2, so $H_Z$ cannot be type 1. 
So $N/Z$ is type 3. 
But $N/Z$ is a minor of one of $N/a$, $N/b$, or $N/c$, so by Proposition \ref{sym29}, one of these is type 3. 
Each of $N/a$, $N/b$ and $N/c$ is essentially 3-connected, so this is contrary to assumption. 
So $H_Z$ is type 2. 
Since $H_Z \del e$ is type 2 with apex $v$, $H_Z$ must also have apex $v$. 
Thus were $e$ not incident to $v$ in $H_Z$, $H_Z$ would be type 3. 
\end{proof}

\paragraph{Case 2.} \emph{$N \del e$ is type 1.} 

We first show that we may now further assume that $N$ has no element $e$ such that $N \del e$ or $N/e$ is essentially 3-connected and type 2. 

\begin{claim} \label{corelli} 
If $N$ has an element $e$ for which $N/e$ is essentially 3-connected and type 2, then $N$ has an element $f$ for which $N \del f$ is essentially 3-connected and represented by a 2-connected type 2 graph. 
\end{claim}

\begin{proof}
Let $G$ be a type 2 graph representing $N/e$, say with apex $v$ and rotation vertex $u$. 
Since $N/e$ has no non-trivial series class, $G$ is 2-connected with minimum degree at least three. 
The graph $G-\{u,v\}$ is a tree with at least one leaf $x$ that is not incident to $u$. 
Since $\deg_G(x)$ is at least three, there are at least two $v$-$x$ edges in $G$. 
Let $f$ be a $v$-$x$ edge. 
Then $G \del f$ is 2-connected, and $N/e \del f$ is essentially 3-connected. 
Let $H$ be a graph representing $N \del f$. 
By Lemma \ref{duckduck}, $H/e = G \del f$, up to 
{rotation, replacement, and rearrangement. But $G \del f$ has at least nine vertices and at most two vertices of degree two, so $G \del f$ is not in $\Ggg$. Thus $H/e = G \del f$ up to rotation and replacement.}

We claim $N \del f$ is essentially 3-connected. 
For suppose to the contrary that $N \del f$ has an essential 2-separation $(A,B)$, say with $e \in A$. 
Since $N \del f /e$ is essentially 3-connected, by Proposition \ref{cam}, $H[A]$ consists of a pendant set of lines together possibly with a set of balloons. 
Contracting $e$ in $H$ thus yields at least one balloon incident to the single vertex in $V(A) \cap V(B)$. 
Since $N \del f /e$ is type 2, represented by $G \del f$, this implies $V(A) \cap V(B) = \{v\}$. 
But it is clear that $f$ is in $\cl_N(B)$, so the 2-separation $(A,B)$ of $N \del f$ extends to a vertical 2-separation $(A, B \cup f)$ of $N$, contrary to Lemma \ref{Nvert3}. 
Thus $H$ is 2-connected and $N \del f$ is essentially 3-connected. 

By assumption, $N$ has no element whose deletion or contraction is essentially 3-connected and type 3. 
Thus $N \del f$, and $H$, is either type 1 or type 2. 
But $H/e = G \del f$ up to rotation and replacement, and $G \del f$ is type 2, so $H/e$ is type 2. 
If $H$ is type 1, then $H/e$ cannot be type 2. 
Thus $H$ is type 2. 
\end{proof} 

By the previous section and the previous claim, we may now assume that $N$ does not have an element $e$ for which $N \del e$ or $N/e$ is essentially 3-connected and type 2 or type 3. 

{Returning now to our main argument: recall that $G$ is a graph representing $N \del e$ where $N \del e$ is essentially 3-connected, $G$ is type 1 with apex vertex $v$, and since $N \del e$ has rank at least ten, $G$ has at least ten vertices.}
Let us assume that $G$ is substandard; 
that is, every balloon of $G$ has $v$ as its vertex of attachment and every loop not properly contained in a balloon is incident to $v$. 

\begin{claim} 
If $G$ is not 2-connected, then $G$ has an element $f$ such that $N \del f$ is essentially 3-connected and represented by a 2-connected graph, up to rolling. 
\end{claim}

\begin{proof}[Proof of Claim]
Suppose $G$ is not 2-connected. 
Since $G$ is substandard, this implies that $G$ has a 1-separation $(A,B)$ with $V(A) \cap V(B) = \{v\}$, in which $G[A]$ consists of one or two balloons and $G[B]-v$ is a tree $T$. 
The component $T$ of $G-v$ has at least two leaves, at most one of which may be an internal vertex of a line, by Lemma \ref{deficientdan}. 
Let $u$ be a leaf of $T$ that is not an internal vertex of a line. 
Then there are at least two $v$-$u$ edges in $G$. 
Let $f$ be a $v$-$u$ edge. 
Then $f$ is not contained in a line or balloon of $G$ and $\co(G) \del f$ remains 2-connected. 
Let $H$ be a graph representing $N \del f$. 
By Lemma \ref{B5}, $N \del f$ is essentially 3-connected, and $H$ is 2-connected up to rolling. 
\end{proof} 

By the claim, we may now assume that $G$ is 2-connected. 

\begin{claim} 
There is a triple of edges $X = \{a,b,c\}$ in $G$ spanning at least five vertices, where none of $a$, $b$, nor $c$ is a loop,
such that $G/Z$ is 2-connected, for every subset $Z \subseteq X$. 
{Moreover, we may choose $a$, $b$, and $c$ so that whenever $G$ has a non-trivial line $L$ with $v$ as one of its vertices of attachment, then each of $a$, $b$, $c$ is in the closure of $E(G)-L$ in $N \del e$.}
\end{claim}

\begin{proof}[Proof of Claim]
{Suppose first $G$ does not have a non-trivial line with $v$ as one of its vertices of attachment.}
Let $S$ denote the set of contractible edges of $G$. 
Assuming that $\si(G)$ is not $K_{2,n}$ nor $K_{2,n}'$, $G[S]$ has a three-edge matching $X = \{a,b,c\}$, by Proposition \ref{marty}. 
By Proposition \ref{sax} $G/Z$ is 2-connected for every subset $Z \subseteq X$. 

So now assume that $\si(G)$ is $K_{2,n}$ or $K_{2,n}'$. 
Then the apex vertex $v$ of $G$ is one of the two vertices of $\si(G)$ of degree $n$ or $n+1$; let $u$ be the other vertex of degree $n$ or $n+1$ in $\si(G)$, and let $Y$ be the set of remaining vertices of $G$. 
Let $a=uy_1$ and $b=uy_2$ be a pair of vertices sharing $u$ as one end and with distinct ends $y_1 \neq y_2$ in $Y$, and let $c$ be an edge with ends $v$ and $y_3 \in Y$, with $y_3 \notin \{y_1,y_2\}$. 
As {$|Y| \geq 8$}, this is clearly possible. 
It is also clear that $G/Z$ remains 2-connected and type 1 for every subset $Z \subseteq X$. 

{Now suppose $G$ has a non-trivial line with $v$ as one of its vertices of attachment. By Proposition \ref{deficientdan}, either $G$ has just one line with at most 4 edges, or $G$ has two lines, one with at most 3 edges, the other with 2 edges. Let $G'$ be the graph obtained by removing the edges and internal vertices of the line or lines of $G$. Then $G'$ remains 2-connected, and has at least 7 vertices. Thus by Proposition \ref{marty}, unless $\si(G')$ is $K_{2,n}$ or $K_{2,n}'$, $G'[S]$ has a three-edge matching $X = \{a,b,c\}$, where $S$ is the set of contractible edges of $G'$, and by Proposition \ref{sax} $G'/Z$ is 2-connected for every subset $Z \subseteq X$. Thus also $G/Z$ is 2-connected up to rolling. 
If $\si(G')$ is $K_{2,n}$ or $K_{2,n}'$, then 
as before let $Y$ be the set of vertices of $G'$ other than the two vertices of degree $n$ or $n+1$ in $\si(G')$. Then $|Y| \geq 5$, so we may choose three edges $a, b, c$ just as in the case of the previous paragraph in which $G$ has no non-trivial line with $v$ as one end while $\si(G)$ is $K_{2,n}$ or $K_{2,n}'$. Observe that just as in the case $\si(G)$ is neither $K_{2,n}$ nor $K_{2,n}'$, $G'$ remains 2-connected. Thus in either case, $B(G')$ is connected. This implies that for each non-trivial line $L$ with $v$ as one of its ends, each of $a$, $b$, and $c$ are in the closure of $E(G) - L$ in $N \del e$. }

{Finally, observe that in any case none of $a$, $b$, nor $c$ is a loop.}
\end{proof} 

{Let $X$ be a set of three edges $a$, $b$, $c$, as guaranteed to exist by the previous claim.}
For each subset $Z$ of $X$, let $H_Z$ be a graph representing $N/Z$. 

\begin{claim} 
For every subset $Z$ of $X$, $H_Z$ is 2-connected and type 1 with apex $v$, up to rolling $e$ has $v$ as an end, and $N/Z$ is vertically 3-connected. 
\end{claim}

\begin{proof}[Proof of Claim]
Since $G/Z$ is 2-connected, $N \del e /Z$ is essentially 3-connected. 
Each of $G/Z$ and $H_Z \del e$ represent $N \del e /Z$; 
by Lemma \ref{duckduck}, $G/Z = H_Z \del e$
{up to rolling, rotation, replacement, and rearrangement. But $G/Z$ is type 1, and no type 1 graphs are in $\Ggg$, so neither rotation nor rearrangement are possible. Thus $G/Z = H_Z \del e$}
up to rolling and replacement. 
Thus $H_Z \del e$ is type 1 with apex $v$ and 2-connected up to rolling. 

Now suppose for a contradiction that $H_Z$ has a proper 1-separation $(A,B)$, say with $V(A) \cap V(B) = \{x\}$ and $e \in A$. 
Then neither $A$ nor $B$ is a parallel class of $N/Z$. 
Since $N/Z$ has no non-trivial series class, $H_Z$ has no balloon and no line, and 
$(A,B)$ is an essential 2-separation of $N/Z$. 
Since $N/Z \del e$ is essentially 3-connected, by Proposition \ref{cam}, $A-e$ is a balloon in $H_Z \del e$, with vertex of attachment $x$. 
This implies $e \in \cl_{N/Z}(A-e)$. 
Since $G/Z$ is 2-connected up to rolling, $x \neq v$ (that is, the balloon $A-e$ in $H_Z \del e$ does not have $v$ as its vertex of attachment). 
Thus $A-e$ is a line of $G/Z$ with vertices of attachment $x$ and $v$. 
{Since the contraction of $Z$ in $G$ cannot produce a line not already present in $G$}, $A-e$ is a line of $G$. 
{Now, $H_Z$ is a minor of one of $H_a$, $H_b$, or $H_c$, let us say, $H_a$. If $x$ were not already a cut vertex in $H_a$, then, because $H_a \del e = G/a$ up to rolling and replacement, by our choice of $a, b, c$, $x$ would not be a cut vertex in $H_Z$, a contradiction. Thus the proper 1-separation $(A,B)$ of $H_Z$ extends to a proper 1-separation $(A,B')$ of $H_a$, where $B \subseteq B'$. 
This separation $(A,B')$ is a vertical 2-separation of $N/a$. 
By our choice of $a$, $a \in \cl_{N \del e}(B')$, so $a \in \cl_N(B')$. But this implies that $(A,B')$ extends to a vertical 2-separation $(A,B' \cup \{a\})$ of $N$, contrary to Lemma \ref{Nvert3}. Thus $H_Z$ is 2-connected.}

Because $H_Z$ is 2-connected, $N/Z$ is essentially 3-connected. 
Since $N/Z$ has no non-trivial series class, $N/Z$ is vertically 3-connected. 

Now suppose for a contradiction that $H_Z$ is not type 1. 
That is, $N/Z$ is type 2 or type 3. 
But this is impossible, since by assumption $N$ does not have an element $f$ for which $N/f$ is essentially 3-connected and type 2 or type 3. 
Since $N/z$ is type 1 for each $z \in \{a,b,c\}$, $N/Z$ is also type 1. 
Since $H_Z \del e$ is type 1 with unique apex $v$, $H_Z$ must also have apex $v$. 
If in $H_Z$ edge $e$ had two distinct ends neither of which were $v$, then $H_Z$ would be type 3, a contradiction. 
\end{proof} 

\paragraph{\emph{Constructing a bicircular twin for $N$.}}
We now have, in each of cases 1 and 2 above, a 2-connected graph $G$ representing $N \del e$, a set $X = \{a,b,c\} \subseteq E(G)$, 
{no element of which is a loop},
such that for each subset $Z$ of $X$, $N/Z$ is vertically 3-connected. 
In the case that $G$ is type 2, $N/Z$ is type 2 for each $Z \subseteq X$, and when $G$ is type 1, so is $N/Z$. 

We now construct a bicircular twin for $N$ relative to $X = \{a,b,c\}$. 
Just as in the proof of Lemma \ref{billbill}, 
we use the ``contraction maps"
{$V(G) \to V(G/Z)$, for each $Z \subseteq X$, defined by composing single-element contraction maps $V(G) \to V(G/z)$ and $V(G/z) \to V((G/z)/z')$, for $z, z' \in X$, and so on; each vertex maps either to itself or, if it is an endpoint of the contracted edge $z$, to the new vertex $\bar{u}^z$ resulting from the contraction}.
For each 
{$Z \subseteq X$, by Lemma \ref{duckduck}, $G/Z = H_Z \del e$}
{up to rolling, rotation, replacement, and rearrangement. Because the set $\Ggg$ contains no type 1 graphs, and in the case $N/Z$ is type 2, $H_Z \del e$ has at least seven vertices, at most two of which have degree two, $H_Z \del e$ is not in $\Ggg$. Thus $G/Z = H_Z \del e$ up to rolling, rotation, and replacement.}

{As in the proof of Lemma \ref{billbill}, denote by $\tau_a$, $\tau_b$, $\tau_{ab}$, and so on, the contraction maps determined by contracting $a$, $b$, $\{a,b\}$, and so on, in $G$; 
and write $\tau_z\inv(e)$ for $z \in X$, and more generally, $\tau_Z\inv(e)$ for $Z \subseteq X$, to denote the set of vertices in $G$ that are the inverse image under $\tau_z$ (resp.\ $\tau_Z$) of the vertices in $G/z$ (resp.\ $G/Z$) that are the endpoints of $e$ in $H_z$ (resp.\ $H_Z$). 
Since $N$ does not have an element whose deletion or contraction is essentially 3-connected and type 3, for each $z \in X$, either}
in $H_z$ the edge $e$ has the apex $v$ as one end {or $H_z$ is type 1 and $e$ is a loop to which a rolling operation may be applied to obtain a substandard representation in which $e$ has $v$ as an endpoint. 
Thus without loss of generality we may assume, for each $z \in X$, that if $H_z$ is type 1 then $H_z$ is substandard. Thus for each $z \in X$, $\tau_z\inv(e)$ contains $v$. 
For each $Z \subseteq \{a,b,c\}$, the matroid $N/Z$ is vertically 3-connected with rank at least seven, so by Theorem \ref{duck} $N/Z$ is uniquely represented by $H_Z$ up to rolling and rotation. 
As in the proof of Lemma \ref{billbill}, the facts that, up to rolling and rotation, 
\[ H_a/\{b,c\} = H_b/\{a,c\} = H_c/\{a,b\} = H_{ab}/c = H_{ac}/b = H_{bc}/a = H_{abc} \] 
and that $e$ has exactly one or two endpoints in each of these graphs, immediately imply that 
\[ 1 \leq | \tau_a\inv(e) \cap \tau_b\inv(e) \cap \tau_c\inv(e) | \leq 2 \]
where the intersection whose size is bounded by this pair of inequalities contains $v$. 
If the size of this intersection is 1, then let $H$ be the graph obtained from $G$ by adding $e$ as a loop incident with $v$. 
Otherwise, when $\tau_a\inv(e) \cap \tau_b\inv(e) \cap \tau_c\inv(e) = \{v,x\}$, let $H$ be the graph obtained from $G$ by adding $e$ as an edge linking $v$ and $x$. 
}

\begin{claim} 
$B(H)$ is a twin for $N$ relative to $X$. 
\end{claim}

\begin{proof}[Proof of Claim]
For each $z \in \{a,b,c\}$, $H/z = H_z$. 
Thus for each $Z \subseteq \{a,b,c\}$, $H/Z = H_Z$, and so $B(H/Z) = N/Z$. 
\end{proof} 

This completes the proof of Lemma \ref{guitar}. 
\end{proof}

\section{No excluded minor has rank greater than nine} \label{wonderwoman} 

\subsection{Quasi-graphic matroids} 

For working with quasi-graphic matroids and finding minors, it is convenient to have additional information about the framework of a quasi-graphic matroid. 
For this we use the results of~\cite{MR4037634}. 
Let $M$ be a quasi-graphic matroid, and let $G$ be a framework for $M$. 
Setting $\Bb = \{C : C$ is a cycle of $G$ and a circuit of $M \}$ we obtain a biased graph $(G,\Bb)$ from the framework $G$. 
The collection $\Bb$ tells us which cycles of $G$ are circuits of $M$, but this does not determine $M$. 
A \emph{bracelet} in a biased graph is a pair of vertex-disjoint unbalanced cycles. 
In general, a bracelet in $(G,\Bb)$ may or may not be a circuit of $M$. 

A \emph{bracelet function} is a function from the set of bracelets of a biased graph $(G,\Bb)$ to the set $\{\mathsf{independent},\mathsf{dependent}\}$. 
Given a biased graph $(G,\Bb)$, the \emph{bracelet graph} $\mathscr B(G,\Bb)$ of $(G,\Bb)$ is the graph with vertex set the collection of bracelets of $(G,\Bb)$ in which 
two bracelets are joined by an edge if and only if their union has 
the property that the minimum number of edges that must be removed to obtain an acyclic subgraph is exactly three.
If $\chi$ is a bracelet function with the property that $\chi(Y)=\chi(Y')$ whenever $Y$ and $Y'$ are bracelets in the same component of $\mathscr B(G,\Bb)$, then $\chi$ is a \emph{proper} bracelet function.
Given a biased graph $(G,\Bb)$ with bracelet function $\chi$, let $\Cc(G, \Bb,\chi)$ be the collection of edge sets of: balanced cycles, thetas with no cycle in $\Bb$, tight handcuffs, bracelets $Y$ with
$\chi(Y)=\mathsf{dependent}$, and loose handcuffs containing bracelets $Y$ with $\chi(Y)=\mathsf{independent}$. 
The following two theorems are proved in~\cite{MR4037634}. 

\begin{thm}[\cite{MR4037634}, Theorem 2.1] \label{BraceletFunctionPropriety}
Let $(G,\Bb)$ be a biased graph with $G$ connected, 
and let $\chi$ be a bracelet function for $(G,\Bb)$. 
If $\Cc(G,\Bb,\chi)$ is the set of circuits of a matroid, then $\chi$ is proper.
\end{thm}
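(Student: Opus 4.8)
The plan is to prove the contrapositive in its sharpest form: I will show that if any edge of $\mathscr B(G,\Bb)$ joins two bracelets that receive different $\chi$-values, then $\Cc(G,\Bb,\chi)$ fails the circuit-elimination axiom and so cannot be the circuit set of a matroid. Since $\chi$ is proper precisely when it is constant on each connected component of $\mathscr B(G,\Bb)$, and each component is connected through its edges, it suffices to treat a single edge. So I would fix adjacent bracelets $Y,Y'$ with $\chi(Y)=\mathsf{dependant}$ and $\chi(Y')=\mathsf{independant}$ and derive a contradiction.

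First I would pin down the structure of an edge of $\mathscr B(G,\Bb)$. Write $Y=C_1\cup C_2$ and $Y'=C_1'\cup C_2'$ as unions of two vertex-disjoint unbalanced cycles. By definition the union $W=E(Y)\cup E(Y')$ has cycle rank $3$, so its cycle space over $\FF_2$ has dimension $3$. The four cycles $C_1,C_2,C_1',C_2'$ all lie in this space; each pair $\{C_1,C_2\}$, $\{C_1',C_2'\}$ is independent (vertex-disjoint nonempty cycles are independent over $\FF_2$), so four vectors in a $3$-dimensional space must satisfy a dependency. A minimal dependency of size at least three would equate a single (connected) cycle with a symmetric difference that is a disjoint union of cycles (disconnected), or would equate the two distinct bracelets $Y,Y'$; both are impossible. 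Hence the minimal dependency has size two, exhibiting a \emph{shared cycle}. Relabelling, write $Y=C_1\cup C_2$ and $Y'=C_1\cup C_3$, with $C_2,C_3$ both disjoint from $C_1$. As the union then has cycle rank $1+(\text{cycle rank of }C_2\cup C_3)=3$, the distinct cycles $C_2,C_3$ have cycle rank of their union equal to $2$; consequently they are vertex-disjoint, meet in exactly one vertex, or meet in exactly one path.

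Now the contradiction. Since $\chi(Y)=\mathsf{dependant}$, the set $C_1\cup C_2$ is a circuit. I would exhibit a second circuit $\Gamma$ and apply circuit elimination. If $C_2,C_3$ meet in a path or a single vertex, take $\Gamma=C_2\cup C_3$, which is a theta or tight handcuff, hence an unconditional circuit (in the degenerate theta where its third cycle is balanced, take $\Gamma$ to be that balanced cycle instead). If $C_2,C_3$ are disjoint and the bracelet $C_2\cup C_3$ is $\mathsf{dependant}$, again take $\Gamma=C_2\cup C_3$. Finally, if $C_2,C_3$ are disjoint and $C_2\cup C_3$ is $\mathsf{independant}$, use connectivity of $G$ to choose a path $R$ from $C_3$ to $C_1\cup C_2$, internally disjoint from $C_1\cup C_2\cup C_3$ and meeting $C_1\cup C_2$ only at its far endpoint; then $\Gamma$ is the resulting loose handcuff, a circuit because the bracelet it contains is $\mathsf{independant}$. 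In each case $C_1\cup C_2$ and $\Gamma$ share the edges of exactly one cycle; delete a shared edge $e$ on that cycle (an edge of $C_2\setminus C_3$ when $\Gamma\supseteq C_2$, an edge of $C_1$ when $R$ lands on $C_1$). Deleting $e$ destroys that one cycle and leaves the two remaining unbalanced cycles vertex-disjoint and in different components, forming a bracelet of value $\mathsf{independant}$ (namely $C_1\cup C_3$, or $C_2\cup C_3$ in the last subcase). Thus $\bigl((C_1\cup C_2)\cup\Gamma\bigr)\del e$ contains no balanced cycle, theta, or tight handcuff; its only bracelet is $\mathsf{independant}$, hence not a circuit; and its two unbalanced cycles are joined by no path, so it contains no loose handcuff. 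Therefore it contains no member of $\Cc(G,\Bb,\chi)$, contradicting the circuit elimination that demanded a circuit inside it.

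The main obstacle is twofold. The structural reduction — that every edge of the bracelet graph comes from two bracelets sharing a cycle — is what makes the whole argument tractable, and I expect the $\FF_2$ cycle-space dimension count to be the cleanest route; some care is needed to exclude the intermediate configuration of two cycles meeting in two vertices but no edge, which has cycle rank $3$ and so is \emph{not} an edge. The genuinely delicate situation is the disjoint subcase in which both $C_1\cup C_3$ and $C_2\cup C_3$ are $\mathsf{independant}$: here no circuit contained in $C_1\cup C_2\cup C_3$ forces anything locally, and one must invoke connectivity of $G$ and route $R$ so that, after elimination, the surviving cycle lands in a component not containing its partner — it is precisely the \emph{absence} of a connecting path that denies the would-be loose handcuff and closes the argument. (For the complementary disjoint subcase, where $C_2\cup C_3$ is $\mathsf{dependant}$, a short submodularity computation on $C_1\cup C_2\cup C_3$ is an alternative, comparing the bound forced by the two circuits through $C_2$ with the bound from submodularity.)
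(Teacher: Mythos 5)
The paper never proves this statement: it is imported verbatim from \cite{MR4037634} (Theorem 2.1 there) and used as a black box, so there is no internal proof to compare your argument against. Judged on its own merits, your proof is correct and self-contained. The reduction to a single edge of $\mathscr B(G,\Bb)$ is sound; your $\FF_2$ cycle-space argument correctly shows that adjacency forces a shared cycle (any size-3 dependency would equate a connected cycle with a disjoint union of two cycles, and the size-4 dependency would force $Y=Y'$), and you rightly exclude the configuration of two cycles meeting in two vertices but no edge, which has cycle rank $3$ and hence is not an edge of the bracelet graph. The case analysis is complete: in each case the exhibited $\Gamma$ really does lie in $\Cc(G,\Bb,\chi)$ (including the degenerate theta whose third cycle is balanced, where you correctly switch $\Gamma$ to that balanced cycle), the deleted edge $e$ lies in $(C_1\cup C_2)\cap\Gamma$, and after deletion the surviving subgraph contains exactly two cycles, both unbalanced, lying in distinct components, whose bracelet has value $\mathsf{independant}$; hence no balanced cycle, theta, tight handcuff, dependant bracelet, or loose handcuff survives, contradicting circuit elimination. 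Connectivity of $G$ enters exactly where the hypothesis demands it, namely the subcase where both $C_1\cup C_3$ and $C_2\cup C_3$ are $\mathsf{independant}$, and your routing of $R$ (landing on either $C_1$ or $C_2$, with the corresponding choice of $e$) handles both outcomes. One phrase is imprecise rather than wrong: in the degenerate balanced-theta subcase, $C_1\cup C_2$ and $\Gamma$ share only the path $C_2\setminus C_3$, not ``the edges of exactly one cycle''; but your prescribed edge choice (an edge of $C_2\setminus C_3$) still lies in the intersection, so the elimination step goes through unchanged.
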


\begin{thm}[\cite{MR4037634}, Theorem 1.1] \label{equivalences}
Let $M$ be a matroid and let $(G, \Bb)$ be a biased graph with $E(G)=E(M)$. 
The following are equivalent. 
\begin{enumerate}[label={\upshape (\roman*)}]
\item There is a proper bracelet function $\chi$ for $G$ such that $M=M(G,\Bb,\chi)$. 
\item $M$ is quasi-graphic with framework $G$ and $\Bb$ is the set of cycles of $G$ that are circuits of $M$. 
\end{enumerate} 
\end{thm}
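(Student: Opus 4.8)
The plan is to prove the two implications separately, using the explicit description of $\Cc(G,\Bb,\chi)$ for one direction and the structural description of the circuits of a quasi-graphic matroid for the other. For (i) $\Rightarrow$ (ii), I would start from $M = M(G,\Bb,\chi)$ and verify directly that $G$ is a framework and that $\Bb$ is the set of cycles that are circuits. The latter is immediate: among the circuit-subgraphs in the definition of $\Cc(G,\Bb,\chi)$, the only ones that are single cycles are the balanced cycles, so the cycles of $G$ lying in $\Cc(G,\Bb,\chi)$ are exactly those in $\Bb$. For the three framework axioms I would argue from the shapes of the circuit-subgraphs. Axiom (iii) is clear, since every shape in the list (balanced cycle, theta, tight or loose handcuffs, bracelet) has at most two components. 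Axiom (ii) follows because each such shape has minimum degree at least $2$: if some $e \in \Star(v)$ lay in $\cl_M(E(G-v))$, there would be a circuit $C \ni e$ with $C - e \subseteq E(G-v)$, making $v$ a degree-$1$ vertex of the circuit-subgraph $G[C]$, which none of the shapes admits. For axiom (i), I would show that any independent $I \subseteq E(H)$, with $H$ a component, satisfies $|I| \le |V(H)|$: each connected piece of $(V(I),I)$ has cycle rank at most $1$, since a connected piece of cycle rank $\ge 2$ contains a theta or handcuffs and hence a circuit, and any cycle present is unbalanced; counting edges then gives $|I| \le |V(I)| \le |V(H)|$.

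For (ii) $\Rightarrow$ (i), I would set $\chi(Y) = \mathsf{dependant}$ exactly when the bracelet $Y$ is dependent in $M$, and aim to show $\Cc(G,\Bb,\chi)$ is precisely the set of circuits of $M$; properness of $\chi$ then comes for free from Theorem \ref{BraceletFunctionPropriety}. The key external input is the structural description (from \cite{JGT:JGT22177,MR4037634}) that every circuit of a quasi-graphic matroid with framework $G$ induces a balanced cycle, a theta, tight handcuffs, loose handcuffs, or a bracelet. Granting this, showing that every circuit lies in $\Cc(G,\Bb,\chi)$ is a short minimality argument: a balanced sub-cycle, or a dependent bracelet strictly inside a loose handcuff, would produce a proper sub-circuit, so a circuit-subgraph that is a theta or handcuff contains no balanced cycle, a circuit-subgraph that is a loose handcuff must sit on an independent bracelet $Y$ (forcing $\chi(Y)=\mathsf{independant}$), and a circuit-subgraph that is itself a bracelet is dependent (so $\chi(Y)=\mathsf{dependant}$). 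Conversely, each member of $\Cc(G,\Bb,\chi)$ is a circuit: it is dependent (by the definition of $\chi$ on bracelets, and by the rank bound of axiom (i) for the connected cycle-rank-$2$ shapes), and deleting any single edge leaves a subgraph whose only cycles are unbalanced and which contains no theta, handcuff, or dependent bracelet, hence is independent.

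The main obstacle is the structural step in (ii) $\Rightarrow$ (i): knowing that the circuit-subgraphs of a quasi-graphic matroid are exactly these five shapes, and in particular that a disconnected circuit is a bracelet while a connected one has cycle rank at most $2$. This is precisely the content imported from \cite{JGT:JGT22177,MR4037634}; once it is in hand, the remainder is bookkeeping with minimality of circuits and the edge-counting rank bound of axiom (i), and the only delicate point is the bracelet/loose-handcuff dichotomy, which is exactly what the value of $\chi(Y)$ is designed to record.
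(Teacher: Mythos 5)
There is nothing in the paper to compare your proof against: Theorem \ref{equivalences} is not proved here but imported verbatim, with attribution, as Theorem 1.1 of \cite{MR4037634}, and the paper uses it (together with Theorem \ref{BraceletFunctionPropriety}) purely as a black box in the proof of Theorem \ref{nomore}. So your attempt can only be judged on its own terms, as a reconstruction of the source's argument. In outline it is sound. The (i) $\Rightarrow$ (ii) direction is correct and essentially self-contained: the degree-one argument for framework condition (ii), the component count for condition (iii), and the cycle-rank count for condition (i) all work, and the observation that the only members of $\Cc(G,\Bb,\chi)$ that are single cycles are the balanced ones gives that $\Bb$ is exactly the set of cycles of $G$ that are circuits of $M$. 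For (ii) $\Rightarrow$ (i) you correctly identify that everything hinges on the Geelen--Gerards--Whittle structure theorem that circuits of a quasi-graphic matroid induce only the five shapes; granting that (and importing it from \cite{JGT:JGT22177}, the earlier paper, keeps the argument non-circular), defining $\chi$ by dependence in $M$ and verifying that $\Cc(G,\Bb,\chi)$ is the circuit set via minimality is the right bookkeeping.

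Two concrete gaps remain. First, in arguing that each member of $\Cc(G,\Bb,\chi)$ is dependent in $M$, you appeal to ``the rank bound of axiom (i)'' for the thetas and handcuffs. But framework condition (i) bounds only the rank of the edge set of an \emph{entire component} of $G$; it says nothing about a theta $T$ properly contained in a component, so it does not give $r_M(E(T)) \le |V(T)|$. What you actually need is the stronger fact that $r_M(X) \le |V(X)|$ for every $X \subseteq E(G)$ (equivalently, that every theta and handcuff subgraph of a framework is dependent); this is a genuinely nontrivial consequence of conditions (i) and (ii), established in \cite{JGT:JGT22177}, and must be imported explicitly rather than attributed to condition (i) itself. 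Second, your claim that properness of $\chi$ ``comes for free from Theorem \ref{BraceletFunctionPropriety}'' is only licensed when $G$ is connected, since that theorem is stated with a connectivity hypothesis, whereas Theorem \ref{equivalences} has none; bracelets whose two cycles lie in different components of $G$ are precisely the case this leaves open, so the disconnected case needs a separate argument or a reduction. Both gaps are patchable, but as written the proof is not complete without them.
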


We say the quasi-graphic matroid $M = M(G,\Bb,\chi)$ is \emph{represented} by the triple $(G,\Bb,\chi)$ consisting of the biased graph $(G,\Bb)$ with bracelet function $\chi$. 

It is shown in~\cite{JGT:JGT22177} that if $M$ is quasi-graphic with framework $G$, then $M \del e$ and $M/e$ are quasi-graphic with framework $G \del e$ and $G/e$, respectively, where if $e$ is an unbalanced loop then $G/e$ is the graph $G^{\circ e}$ defined in Section \ref{minorsofbicircularmatroids} for contracting an unbalanced loop in a biased graph. 
It is shown in~\cite{MR4037634} that if $M$ is represented by $(G,\Bb,\chi)$, then 
$M(G,\Bb,\chi) \del e$ and $M(G,\Bb,\chi)/e$ are represented by naturally defined biased graphs $(G,\Bb) \del e$, $(G,\Bb)/e$ along with their inherited bracelet functions. 
For details, see Sections 2 and 4.2 of~\cite{MR4037634}.

\subsection{Six small excluded minors} \label{littleguys} 

In the following section, we show that there is no excluded minor of rank greater than nine by showing that any such purported matroid in fact already contains one of six smaller excluded minors. 
In this section we describe these matroids. 
Each is quasi-graphic, and so has a biased graphic representation. 
Some have more than one such representation; 
here we exhibit those representations that are useful for finding these matroids as minors in the proof of Theorem \ref{nomore}. 
Geometric representations and frameworks are shown in Figures \ref{LittleGuys1}-\ref{LittleGuys5}. 
Points in geometric representations are shown as solid discs, while vertices of graphs are open circles. 
The M-numbers labelling the matroids are their unique identifiers in the dataset described in~\cite{MR2389607}. 

\begin{figure}[tbp] 
\begin{center} 
\includegraphics[scale=0.95]{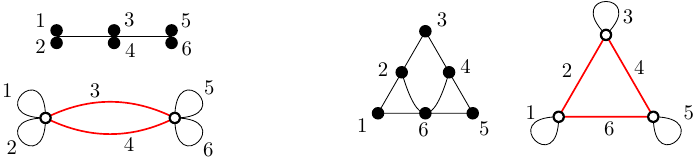}
\end{center} 
\caption{$M(2C_3)$ and $M(K_4)$}
\label{LittleGuys1} 
\end{figure} 
The graph $2C_3$ is obtained by adding an edge in parallel to each of the edges of a 3-cycle. 
Its cycle matroid $M(2C_3)$ has rank two and consists of three 2-element rank-1 flats, one of which is a balanced 2-cycle in the framework shown in Figure \ref{LittleGuys1}. 
At right in Figure \ref{LittleGuys1} is a framework for $M(K_4)$, with balanced cycle $\{2,4,6\}$. 
The matroid \emph{Pf} is shown in Figure \ref{LittleGuys2}, along with two of its frameworks (it is its framework at right that we find in a purported excluded minor). 
The framework shown at centre in Figure \ref{LittleGuys2} has a single balanced cycle $\{1,4,5,6\}$, and the framework at right 
has dependent bracelet $\{2,3\} \cup \{1,4\}$. 
\begin{figure}[tbp] 
\begin{center} 
\includegraphics[scale=0.95]{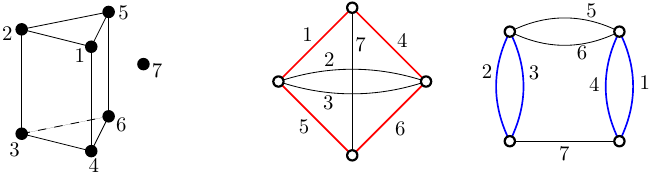}
\end{center} 
\caption{ \emph{Pf}}
\label{LittleGuys2} 
\end{figure} 
The framework for $\overline{P}$ shown in Figure \ref{LittleGuys3} has a single balanced cycle $\{1,2,3,4\}$. 
\begin{figure}[tbp] 
\begin{center} 
\includegraphics[scale=0.95]{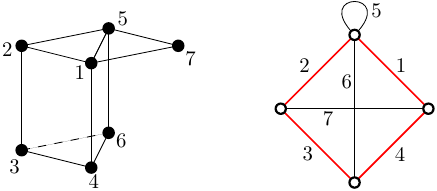}
\end{center} 
\caption{$\overline{P}$}
\label{LittleGuys3} 
\end{figure} 
The framework for $\overline{\underline{P}}$ shown in Figure \ref{LittleGuys4} has the single balanced cycle $\{3,4,5,6\}$. 
\begin{figure}[tbp] 
\begin{center} 
\includegraphics[scale=0.95]{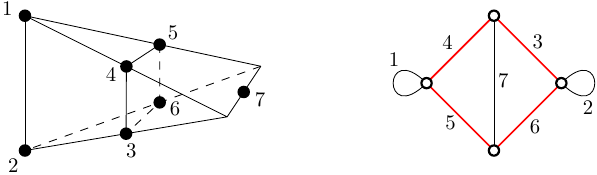}
\end{center} 
\caption{$\overline{\underline{P}}$}
\label{LittleGuys4} 
\end{figure} 
The matroid M2077 is the dual of the matroid whose geometric representation is shown in Figure \ref{LittleGuys5}. 
Its framework has one balanced cycle, $\{1,2,6,7,8\}$. 
\begin{figure}[tbp] 
\begin{center} 
\includegraphics[scale=0.95]{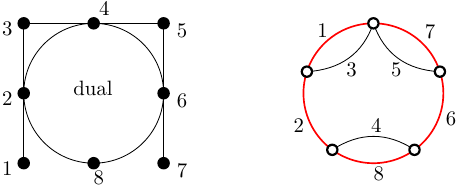}
\end{center} 
\caption{M2077 has 5-circuits 45678, 12678, 12348, and 4-circuit 1357.}
\label{LittleGuys5} 
\end{figure} 
{(The M-number for a matroid is its unique identifier in the dataset described in~\cite{MR2389607}.)}

It is straightforward (if labourious) to check that each of these matroids is minor-minimally non-bicircular. 
{We have verified that each is an excluded minor by computer search}
{(see \hyperref[appendix]{Appendix}).}
{As examples of how such verification may be done ``by hand", we provide the following simple checks that $M(2C_3)$ and $M(K_4)$ are indeed minor-minimally non-bicircular. }

{The matroid $M(2C_3)$ is the rank-2 matroid consisting of three distinct parallel classes each consisting two elements. 
A parallel class of elements in a bicircular graph representation may only be represented as a set of loops incident to a common vertex, and elements of distinct parallel classes must be incident to distinct vertices. 
A graph representation of a rank-2 matroid has exactly two vertices. 
Since $M(2C_3)$ has three distinct parallel classes, it cannot be represented by a graph with just two vertices, so $M(2C_3)$ is not bicircular. 
}

{Because $M(2C_3)$ has transitive automorphism group, we just need check that each of $M(2C_3) \del e$ and $M(2C_3)/e$ are bicircular, for an arbitrary element $e$ of its ground set. 
So let $e \in E(2C_3)$. 
The matroid $M(2C_3) \del e$ is the rank-2 matroid consisting of two distinct parallel classes of size two and a third element placed freely on the line spanned by them. 
The matroid $M(2C_3)/e$ is the rank-1 matroid consisting of a parallel class of four elements and one loop. 
Bicircular graph representations for each are shown in Figure \ref{M2C3isanexminor}. 
\begin{figure}[tbp] 
\begin{center} 
\includegraphics[scale=0.95]{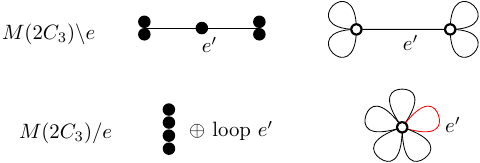}
\end{center} 
\caption{{Bicircular graph representations for $M(2C_3) \del e$ and $M(2C_3)/e$, where $\{e,e'\}$ is a parallel pair in $M(2C_3)$; $e'$ is a loop of $M(2C_3)/e$ and a balanced loop in its graph representation.}}
\label{M2C3isanexminor} 
\end{figure} 
}

{We now show that $M(K_4)$ is not bicircular. 
Each element of $M(K_4)$ is in exactly two 3-circuits, each of which is a rank-2 flat. 
Observe that a rank-2 flat in a bicircular representation consists precisely of the set of all edges incident to a pair of vertices. 
Thus if an element $e$ is contained in two distinct rank-2 flats $L, L'$ of a bicircular matroid, then in any bicircular graph representation $e$ is a loop (where, if $L$ consists of the set of all edges incident to vertices $u, v$, then $L'$ is the set of all edges incident to a pair of vertices $u, w$ for some vertex $w \notin \{u,v\}$, and $e$ is a loop with endpoint $u$). 
Toward a contradiction, suppose $G$ is a graph providing a bicircular representation for $M(K_4)$. The matroid $M(K_4)$ has rank 3, so $G$ has three vertices. 
Because each of its six elements is in exactly two rank-2 flats, each is represented by a loop in $G$. 
Since $M(K_4)$ has no two elements in parallel, no vertex of $G$ has more than one incident loop. 
But $G$ only has three vertices, so this is impossible. 
}

{The automorphism group of $M(K_4)$ is transitive, so we just need to check that each of $M(K_4) \del e$ and $M(K_4)/e$ is bicircular, for an arbitrary element $e \in E(M(K_4))$. 
Bicircular graph representations for each are shown in Figure \ref{MK4isanexminor}. 
\begin{figure}[tbp] 
\begin{center} 
\includegraphics[scale=0.95]{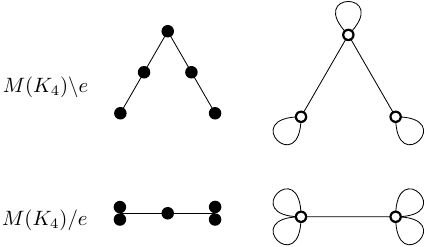}
\end{center} 
\caption{Bicircular graph representations for $M(K_4) \del e$ and $M(K_4)/e$.}
\label{MK4isanexminor} 
\end{figure} 
}

\subsection{No excluded minor has rank greater than nine} \label{warthog}

Let $N$ and $M$ be matroids on a common ground set $E$.  
We say that a set $Y \subseteq E$ is a \emph{disagreement set} if $r_N(Y) \neq r_{M}(Y)$. 

\begin{prop} \label{bob} 
If $Y$ is a minimal disagreement set for $N$ and $M$, then $Y$ is a circuit in one of $N$ or $M$ and an independent set in the other. 
\end{prop}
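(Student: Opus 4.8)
The plan is to exploit minimality directly: for every $e \in Y$ the set $Y-e$ is a proper subset of a minimal disagreement set, and hence is \emph{not} a disagreement set, so $r_N(Y-e) = r_M(Y-e)$. First I would record two trivial facts. Since $r_N(\emptyset) = 0 = r_M(\emptyset)$, the set $Y$ is nonempty; and by relabelling $N$ and $M$ if necessary we may assume $r_N(Y) > r_M(Y)$ (so the symmetric conclusion, circuit in $N$ and independent in $M$, is covered by the relabelling). Thus $r_N(Y) \geq r_M(Y) + 1$.

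The key step is to pin down all the relevant local rank values simultaneously, and in particular to show the rank gap is exactly one rather than merely positive. Fix any $e \in Y$. Using the elementary bounds $r(Y) - 1 \leq r(Y-e) \leq r(Y)$, valid in any matroid, together with $r_N(Y) \geq r_M(Y) + 1$, I would form the chain
\[
r_N(Y-e) = r_M(Y-e) \leq r_M(Y) \leq r_N(Y) - 1 \leq r_N(Y-e).
\]
Because the two ends of this chain coincide, every inequality is forced to be an equality. This yields, for \emph{every} $e \in Y$, that $r_N(Y-e) = r_N(Y) - 1$, that $r_M(Y-e) = r_M(Y)$ (equivalently $e \in \cl_M(Y-e)$), and that $r_M(Y) = r_N(Y) - 1$.

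With these equalities in hand the conclusion is immediate. Since $r_N(Y-e) = r_N(Y) - 1$ for every $e \in Y$, removing any single element strictly drops the $N$-rank, so no element of $Y$ lies in the $N$-closure of the rest; hence $Y$ is independent in $N$, and in particular $r_N(Y) = |Y|$. Then $r_M(Y) = r_N(Y) - 1 = |Y| - 1 < |Y|$, so $Y$ is dependent in $M$, while $r_M(Y-e) = r_M(Y) = |Y| - 1 = |Y-e|$ shows that each $Y-e$, and therefore every proper subset of $Y$, is independent in $M$. Thus $Y$ is a minimal dependent set of $M$, i.e.\ a circuit of $M$, and $Y$ is independent in $N$, as required.

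I do not expect any genuine obstacle here; the argument is a short piece of rank bookkeeping. The only point needing slight care is the sandwich chain in the second step, which is what simultaneously forces the gap to be exactly one and determines all the intermediate ranks, rather than assuming a gap of one at the outset.
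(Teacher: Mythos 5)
Your proof is correct, but it takes a genuinely different route from the paper's. The paper argues combinatorially with circuits: since the ranks of $N$ and $M$ disagree on $Y$, their restrictions to $Y$ have different circuit collections, so one can choose $C \subseteq Y$ that is a circuit of exactly one of the two matroids, say of $N$; if $C$ is independent in $M$ then $C$ is itself a disagreement set, whence $C = Y$ by minimality and we are done, while if $C$ is dependent in $M$ it properly contains a circuit $C'$ of $M$, which is then independent in $N$ and so is a disagreement set properly contained in $Y$, contradicting minimality. You instead work entirely with the rank function: minimality gives $r_N(Y-e) = r_M(Y-e)$ for every $e \in Y$, and your sandwich chain forces all the local ranks simultaneously, after which independence in one matroid and circuithood in the other fall out mechanically. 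The two uses of minimality are genuinely different --- the paper applies it to a distinguishing circuit inside $Y$, you apply it to the single-element deletions $Y - e$. Your version avoids the case analysis and yields a bit more for free: it shows the rank gap on a minimal disagreement set is exactly one, and that $Y$ is the circuit in whichever matroid assigns it the smaller rank. The paper's version stays in the language of circuits, which is the form in which the proposition is actually consumed downstream (e.g.\ in Proposition \ref{Hank} and Theorem \ref{nomore}), and it identifies the circuit side directly rather than via a relabelling convention. Both are complete and elementary; yours is arguably the more systematic bookkeeping, the paper's the more structural.
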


\begin{proof} 
Since the ranks of $N$ and $M$ disagree on $Y$, we may choose $C \subseteq Y$ so that $C$ is a circuit of
exactly one of $N$ or $M$; let us say $C$  is a circuit of $N$.  
If $C$ is independent in $M$, then $C$ is a disagreement set. 
Since every subset of $C$ is independent in $N$ and $M$, $C=Y$ and we are done.  
Otherwise, $C$ is dependent in $M$, and so there is a subset $C'$ of $C$ so that $C'$ is a circuit of $M$. 
Since $C$ is not a circuit in both $N$ and $M$, $C'$ is a proper subset of $C$. 
Thus $C'$ is independent in $N$, and so $C'$ is a disagreement set properly contained in $Y$, a contradiction.
\end{proof}

\begin{prop} \label{Hank} 
Let $N$ and $M$ be matroids on a common ground set $E$. 
Assume $N$ and $M$ are twins relative to $X \subseteq E$, and that $Y \subseteq E$ is a circuit in $N$ but independent in $M$. 
\begin{enumerate}[label={\upshape (\roman*)}]
\item No element of $X$ is in the span of $Y$ in $N$. 
\item Every element of $X$ is in the span of $Y$ in $M$.  
\item $Y$ is a circuit in $M/e$ whenever $e \in X$. 
\end{enumerate} 
\end{prop}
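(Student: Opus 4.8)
The plan is to reduce all three parts to a single rank identity coming from the twin hypothesis, and then read off each conclusion. Fix $e \in X$. Since $M/e = N/e$, the two matroids have the same rank function on $E - e$; applying this to the set $Y \setminus \{e\}$ and using $r_{M/e}(S) = r_M(S \cup \{e\}) - r_M(\{e\})$ (and likewise for $N$) yields
\begin{equation} \label{eq:twinrank}
r_M(Y \cup \{e\}) - r_M(\{e\}) = r_N(Y \cup \{e\}) - r_N(\{e\}).
\end{equation}
Throughout I use that in our setting $N$ is loopless (indeed vertically $3$-connected, by Lemma \ref{Nvert3}) and $M$ is bicircular, hence loopless, so $r_N(\{e\}) = r_M(\{e\}) = 1$ for every $e \in X$; this non-degeneracy is precisely what makes (i) true, and I flag it as the one point to be careful about.

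For (i), suppose toward a contradiction that some $e \in X$ lies in $\cl_N(Y)$, so that $r_N(Y \cup \{e\}) = r_N(Y) = |Y| - 1$ (recall $Y$ is a circuit of $N$). Substituting into \eqref{eq:twinrank} with $r_N(\{e\}) = r_M(\{e\}) = 1$ gives $r_M(Y \cup \{e\}) = |Y| - 1$. But $Y$ is independent in $M$, so $r_M(Y \cup \{e\}) \ge r_M(Y) = |Y|$, a contradiction. Hence no element of $X$ is spanned by $Y$ in $N$; in particular $X \cap Y = \emptyset$.

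For (iii), fix $e \in X$. By (i), $e \notin \cl_N(Y)$, so $r_N(Y \cup \{e\}) = r_N(Y) + 1 = |Y|$, whence $r_{N/e}(Y) = r_N(Y \cup \{e\}) - 1 = |Y| - 1$ and $Y$ is dependent in $N/e$. For any proper subset $Y' \subsetneq Y$ we have $\cl_N(Y') \subseteq \cl_N(Y)$, so $e \notin \cl_N(Y')$ and $Y'$ is independent in $N$, giving $r_{N/e}(Y') = r_N(Y' \cup \{e\}) - 1 = |Y'|$; thus every proper subset of $Y$ is independent in $N/e$. Therefore $Y$ is a circuit of $N/e$, and since $N/e = M/e$ it is a circuit of $M/e$. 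Part (ii) is then immediate: $Y$ is a circuit, hence dependent, in $M/e$, while $Y$ is independent in $M$, and an independent set becomes dependent after contracting an element $e \notin Y$ exactly when $e \in \cl_M(Y)$ (equivalently $r_M(Y \cup \{e\}) = r_{M/e}(Y) + r_M(\{e\}) = (|Y| - 1) + 1 = |Y| = r_M(Y)$). Hence every $e \in X$ lies in $\cl_M(Y)$.

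The computations are routine rank bookkeeping, so the only genuine subtlety — the step I would expect to be the main obstacle — is the use of looplessness in (i). The bare statement of (i) is false for arbitrary matroids: if $e$ happened to be a loop of $N$ then $e \in \cl_N(Y)$ automatically, and one can build small twins (for instance with $N/e = M/e = U_{1,2}$) where (i) fails. The argument therefore genuinely relies on $N$ being loopless, which is guaranteed in every application by Lemma \ref{Nvert3}. Once that is secured, (iii) follows formally from (i) together with $N/e = M/e$, and (ii) is a one-line consequence of (iii).
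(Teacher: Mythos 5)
Your proof is correct, and at its core it is the same argument as the paper's: everything is read off from the rank identity supplied by the twin hypothesis ($r_{M/e}=r_{N/e}$ on subsets of $E-e$) together with routine rank bookkeeping. Two differences are worth recording. First, the logical order: you prove (i), deduce (iii) from it, and then get (ii) as a corollary of (iii); the paper proves (ii) directly, by supposing $e \notin \cl_M(Y)$ and computing $r_{N/e}(Y) = r_{M/e}(Y) = r_M(Y) = r_N(Y)+1 > r_N(Y)$, a contradiction. The paper's route for (ii) has a small advantage you lose: it needs no non-degeneracy hypothesis whatsoever, since $e \notin \cl_M(Y)$ already forces $e$ to be a non-loop of $M$, whereas your (ii) inherits the looplessness requirement through (i). Second, your caveat about loops is genuine and well spotted, not a pedantic worry: as literally stated, for arbitrary matroids, part (i) is false. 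For instance, let $N$ on $\{e,y_1,y_2\}$ have $e$ a loop and $\{y_1,y_2\}$ a parallel pair, and let $M = U_{2,3}$; then $N/e = M/e = U_{1,2}$, so $N$ and $M$ are twins relative to $X=\{e\}$, and $Y=\{y_1,y_2\}$ is a circuit of $N$ and independent in $M$, yet $e \in \cl_N(Y)$. The paper's own proof of (i) silently assumes exactly what you make explicit: the step $r_{N/e}(Y) = r_N(Y)-1$ uses $r_N(\{e\})=1$, i.e., that $e$ is not a loop of $N$. In every application of the proposition $N$ is an excluded minor, hence loopless (Lemma \ref{Nvert3}), so nothing downstream is affected; but strictly speaking the proposition should carry the hypothesis that no element of $X$ is a loop of $N$, and your write-up is the more careful of the two on this point.
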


\begin{proof} 
Let $e \in X$. 
Suppose, contrary to (i), that $e \in \cl_N(Y)$. 
Then $r_{M/e}(Y) = r_{N/e}(Y) = r_N(Y)-1$. 
But $r_N(Y) = r_{M}(Y)-1$, so this implies that $r_{M}(Y)-2 = r_{M/e}(Y)$, a contradiction. 

Now suppose, contrary to (ii), that $e \notin \cl_M(Y)$. 
Then $r_{N/e}(Y) = r_{M/e}(Y) = r_M(Y)$. 
But $r_M(Y) = r_N(Y)+1$, so this implies that $r_{N/e}(Y) = r_N(Y)+1$, a contradiction. 

Statement (iii) follows from statement (i): 
since $e$ is not in $\cl_N(Y)$, $Y$ remains a circuit in $N/e = M/e$. 
\end{proof}

Note that statement (i) of Proposition \ref{Hank} implies that no minimal disagreement set for $N$ and $M$ contains an element of $X$. 

\begin{thm} \label{nomore} 
Let $N$ and $M$ be matroids on a common ground set $E$. 
Let $X =\{a,b,c\} \subseteq E$ be a subset of three elements such that 
\begin{enumerate}[label={\upshape (\roman*)}]
\item $N$ and $M$ are twins relative to $X$, 
\item For every subset $Z$ of $X$, $N/Z = M/Z$ is vertically 3-connected, and 
\item $M$ is bicircular, represented by a graph $G$ in which $|V_G(X)| \geq 5$. 
\end{enumerate} 
Then $N$ 
{contains} one of 
{$M(2C_3)$}, $M(K_4)$, 
$\overline{P}$, {Pf}, $\overline{\underline{P}}$, or M2077 as a minor. 
\end{thm}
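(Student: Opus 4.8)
The plan is to exploit the quasi-graphic structure shared by $N$ and $M$ and to localise the failure of $N$ to be bicircular in a single dependent bracelet lying over $X$. First I would record that, since $N$ is an excluded minor, it is not bicircular, so $N \neq M$. Using the twin relation $N/Z = M/Z = B(G/Z)$ for all $Z \subseteq X$ together with Theorems~\ref{equivalences} and \ref{BraceletFunctionPropriety}, I would show $N$ is quasi-graphic with the \emph{same} framework $G$ as $M$: the relevant biased graph is $(G,\emptyset)$ (all cycles unbalanced), and defining a bracelet function $\chi$ on $(G,\emptyset)$ by declaring a bracelet $\mathsf{dependant}$ exactly when it is dependent in $N$ gives $N = M(G,\emptyset,\chi)$ with $\chi$ proper, while $M = B(G)$ is $M(G,\emptyset,\chi_0)$ for the constant function $\chi_0 \equiv \mathsf{independant}$. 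Thus $N$ and $M$ differ only in their bracelet functions.

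Next I would take a minimal disagreement set $Y$. By Proposition~\ref{bob}, $Y$ is a circuit of exactly one of $N,M$ and independent in the other. I would rule out the possibility that $Y$ is a circuit of $M$: a circuit of $M = B(G)$ is a bicycle of $G$, and in $N = M(G,\emptyset,\chi)$ every theta and every tight handcuff is a circuit, while every loose handcuff contains its bracelet (a circuit when the bracelet is dependent, and otherwise the handcuff itself is a circuit); hence every bicycle of $G$ is dependent in $N$, so $Y$ cannot be independent in $N$. Therefore $Y$ is a circuit of $N$ that is independent in $M$. Being independent in $B(G)$, the subgraph $G[Y]$ has every component containing at most one cycle, yet $Y$ is a circuit of $M(G,\emptyset,\chi)$; the only such possibility is a \emph{dependent bracelet}, so $Y = C_1 \cup C_2$ is a disjoint union of two unbalanced cycles with $\chi(Y) = \mathsf{dependant}$.

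I would then pin down how $X$ meets $Y$. By Proposition~\ref{Hank}(iii), $Y$ is a circuit of $M/e = B(G/e)$ for each $e \in \{a,b,c\}$. Since $Y$ is independent in $B(G)$ but becomes a bicycle after contracting the single edge $e$, a short count of edges versus vertices forces $e$ to join $C_1$ to $C_2$, contracting into the common vertex of a pair of tight handcuffs; thus each of $a,b,c$ is a transversal edge of the bracelet, with one end on $C_1$ and one on $C_2$. The hypothesis $|V_G(X)| \geq 5$ then restricts how the six ends of $a,b,c$ may coincide on the two cycles, leaving only finitely many attachment patterns to consider.

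Finally, to exhibit one of the six excluded minors I would reduce $G$ by contracting edges inside $C_1$ and $C_2$ and deleting superfluous transversals. Contracting a proper subset $S \subsetneq Y$ of the circuit $Y$ leaves $Y \setminus S$ a circuit, so the distinguishing dependency survives; where convenient I would also contract an unbalanced loop, which turns incident edges into balanced loops and so produces the balanced cycles appearing in some frameworks of Section~\ref{littleguys}. According to the lengths of $C_1,C_2$ and the coincidence pattern of the transversal ends, the resulting small minor of $N$ is isomorphic to one of $M(2C_3)$, $M(K_4)$, $\overline{P}$, \emph{Pf}, $\overline{\underline{P}}$, or $M2077$, which I would verify against the biased-graph representations recorded in Figures~\ref{LittleGuys1}--\ref{LittleGuys5}. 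I expect the main obstacle to be precisely this last configuration analysis: enumerating the attachment patterns of $a,b,c$, choosing contractions and deletions that preserve the dependent bracelet while matching a framework of one of the six matroids, and confirming that each resulting minor is exactly the claimed excluded minor rather than a bicircular relaxation of it.
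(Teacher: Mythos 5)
There is a genuine gap, and it is structural: you assume that the biased graph underlying $N$ is $(G,\emptyset)$, i.e.\ that no cycle of $G$ is a circuit of $N$, so that $N$ and $M$ can differ only in their bracelet functions. Nothing in the hypotheses forces this, and it fails precisely in the cases that account for five of the six matroids in the conclusion. A minimal disagreement set $Y$ (a circuit of $N$ that is independent in $M$) need not be a pair of disjoint cycles: it can equally well induce a single cycle $C$ of $G$ having each of $a,b,c$ as a chord. Indeed, take $N = F(G,\{C\})$, the frame matroid on $G$ whose unique balanced cycle is $C$, with $a,b,c$ chords of $C$: contracting a chord $z$ turns $C$ into tight handcuffs and kills the bias, so $(G,\{C\})/z = (G/z,\emptyset)$ and hence $N/z = B(G/z) = M/z$ for every $z \in X$. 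Thus $N$ and $M = B(G)$ are twins satisfying all the hypotheses, yet $N$ has no dependent bracelet whatsoever, and your $\chi$ is not even well defined as claimed (a bracelet of $(G,\emptyset)$ containing $C$ is dependent in $N$ but is not a circuit of $N$, so $N \neq M(G,\emptyset,\chi)$). Consequently your step ``the only such possibility is a dependent bracelet'' is false, and the argument you give to rule out the case ``$Y$ a circuit of $M$, independent in $N$'' is circular, since it invokes the representation $N = M(G,\emptyset,\chi)$ that is being established. The paper rules that case out by a direct argument (Claim \ref{click}) using Proposition \ref{Hank} and the hypothesis $|V_G(X)| \geq 5$, with no appeal to any presumed representation of $N$.

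Because of this omission, your strategy can only ever reach the matroid \emph{Pf}: in the paper's proof the dependent-bracelet configuration is exactly the branch producing \emph{Pf} (Claim \ref{frameifnoPf}), whereas $M(2C_3)$, $M(K_4)$, $\overline{P}$, $\overline{\underline{P}}$ and $M2077$ all arise from the other branch, in which every minimal disagreement set is a balanced cycle, $N$ is the frame matroid $F(G,\mathcal{B})$ with $\mathcal{B} = \{C : C \text{ a cycle of } G \text{ and a circuit of } N\} \neq \emptyset$, and the real work is the configuration analysis of how the three chords $a,b,c$ attach to a balanced cycle $C$ (ends alternating, forming a matching, or two chords adjacent), together with a 2-connectivity argument supplying an auxiliary path in the last case. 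To repair your proof you would need to replace $(G,\emptyset)$ by $(G,\mathcal{B})$, prove the dichotomy of Claim \ref{charlap} (minimal disagreement sets are either cycles with $a,b,c$ as chords or disjoint cycle pairs with $a,b,c$ as transversals), and then supply the chord-configuration analysis, for which your proposal currently has no counterpart.
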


\begin{proof} 
We proceed via a series of three claims.  
The first two will establish that $N$ is quasi-graphic. 
The third claim is that $N$ either contains  \emph{Pf} or is frame.  
Then from the graph representation of $M$ we obtain a biased graph representing $N$ as a frame matroid. 
Finally, we show that this biased graph contains a biased graph representation of one of the matroids listed as a minor. 

\begin{claimnumbered} \label{click} 
A minimal disagreement set is a circuit in $N$ and independent in $M$. 
\end{claimnumbered}

\begin{proof}[Proof of Claim]
Suppose to the contrary that a minimal disagreement set $Y$ is a circuit in $M$ and independent in $N$.  
Then $G[Y]$ forms either handcuffs or a theta. 
By Lemma \ref{Hank}, no element of $X$ is in $\cl_M(Y)$, so no edge in $X$ has both ends in $V_G(Y)$.  
By Lemma \ref{Hank}, each element of $X$ is in $\cl_N(Y)$. 
In particular, $b$ and $c$ are in the span of $Y$ in $M/a = N/a$. 
Thus in $G/a$, both ends of $b$ and $c$ are in $V_{G/a}(Y)$. 
This is only possible if $a$, $b$, and $c$ have a common end in $G$. 
But this is contrary to the assumption that $|V_G(X)| \geq 5$. 
\end{proof} 

\begin{claimnumbered} \label{charlap} 
A minimal disagreement set is either a cycle or a pair of vertex disjoint cycles in $G$. 
In the first case each of $a, b, c$ is a chord of the cycle, while in the second case each of $a, b, c$ has one end in each cycle. 
\end{claimnumbered}

\begin{proof}[Proof of Claim]
Let $Y$ be a minimal disagreement set. 
By Lemma \ref{Hank}, each of $a, b, c$ are in the span of $Y$ in $M$, and while $Y$ is independent in $M$, $Y$ is a circuit in $M/z$ for each $z \in X$. 
Thus $Y \cup \{z\}$ is a circuit of $M$, for each $z \in X$.  
Suppose $G[Y]$ contains a vertex $u$ of degree 1. 
Because $|V_G(X)| \geq 5$, there is element $z \in X$ that is not incident with $u$. 
But then the subgraph of $G/z$ induced by $Y$ still has $u$ as a vertex of degree 1, and so is not a circuit of $M/z$, a contradiction. 
Thus $G[Y]$ has minimum degree 2. 
But $Y$ is independent in $M$, so $G[Y]$ forms a collection of disjoint cycles. 
But for each $z \in X$, $Y$ is a circuit of $M/z$, and so induces a bicycle in $G/z$. 
Thus either $G[Y]$ is a cycle and each of $a$, $b$, and $c$ is a chord of $G[Y]$, or $G[Y]$ is a pair of disjoint cycles and each of $a$, $b$, and $c$ have one end in each cycle. 
\end{proof}

Put $\Bb = \{ C : C$ is a circuit of $N$ and a cycle of $G\}$. 
Claims \ref{click} and \ref{charlap} imply that $(G,\Bb)$ is a biased graph. 
Let $\chi$ be the bracelet function mapping a bracelet $Y$ of $(G,\Bb)$ to $\mathsf{dependent}$ if and only if $Y$ is a circuit of $N$. 
Claims \ref{click} and \ref{charlap} along with Theorems  \ref{BraceletFunctionPropriety} and \ref{equivalences} imply that 
$N$ is quasi-graphic, represented by $(G,\Bb,\chi)$. 

\begin{claimnumbered} \label{frameifnoPf}
Either $N$ contains  \emph{Pf} as a minor, or $N$ is a frame matroid represented by the biased graph $(G,\Bb)$, where $\Bb = \{ C : C$ is a cycle of $G$ and a circuit of $N\}$. 
In the latter case, no cycle in $\Bb$ contains an edge in $X$, but each edge in $X$ is a chord of every cycle in $\Bb$.  
\end{claimnumbered}

\begin{proof}[Proof of Claim]
If every minimal disagreement set for $N$ and $M$ induces a cycle in $G$, then $\chi$ maps every bracelet to $\mathsf{independent}$, so $M(G,\Bb,\chi) = F(G,\Bb)$; that is, $N$ is a frame matroid represented by the biased graph $(G,\Bb)$.  
Otherwise, there is a bracelet $Y$ mapped to $\mathsf{dependent}$ by $\chi$. 
Let $C_1, C_2$ be the pair of disjoint cycles whose union is $G[Y]$. 
Each edge $z \in X$ has one end in $V(C_1)$ and one end in $V(C_2)$, and $|V_G(X)| \geq 5$. 
Since no minimal disagreement set contains $a$, $b$, or $c$, the subgraph $H$ induced by $X \cup Y$ contains no cycle that is a circuit of $N$. 
Thus the biased subgraph of $(G,\Bb)$ induced by $X \cup Y$ contains no balanced cycle. 
Writing $H = G[X \cup Y]$, we find as a minor of $N$ the matroid with quasi-graphic representation $(H,\emptyset,\chi')$, where $\chi'$ is the bracelet function assigning $\mathsf{dependent}$ to the bracelet $Y$ in $(H,\emptyset)$ 
(Figure \ref{Pfasaminor}). 
\begin{figure}[tbp] 
\begin{center} 
\includegraphics[scale=0.95]{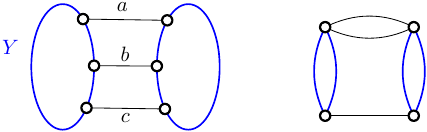}
\end{center} 
\caption{Finding \emph{Pf} as a minor.}
\label{Pfasaminor} 
\end{figure} 
Now it is easy to see that $(H,\emptyset,\chi')$ contains a quasi-graphic representation of \emph{Pf} as a minor. 
So $N$ contains \emph{Pf} as a minor. 
\end{proof}

We may now assume that $N$ is a frame matroid, represented by the biased graph $(G,\Bb)$ as described in Claim \ref{frameifnoPf}. 
Choose a cycle $C$ in $\mathcal{B}$. 
By Claim \ref{frameifnoPf}, none of $a$, $b$, nor $c$ are contained in $C$ but each of $a$, $b$, and $c$ is a chord of $C$. 

If two of the elements in $X$, say $a$ and $b$, are non-adjacent (non-loop) edges whose ends alternate in the cyclic order on $V(C)$, then $(G,\Bb)$ contains a biased graph representing either $\overline{P}$ or \emph{Pf} as a minor (see Figures \ref{LittleGuys2} and \ref{LittleGuys3}). 
Otherwise, if $a, b, c$ form a matching in $G$ then $(G,\Bb)$ contains a biased graph representing either $\overline{\underline{P}}$ or $K_4$ as a minor. 
So assume no two of $a, b, c$ have their ends alternating in the cycle order on $V(C)$ and 
two of $a, b, c$ are adjacent. 
We find as a minor of $(G,\Bb)$ either a biased graph representing M2077 or the biased graph show at left in Figure \ref{figtired}. 
In the later case, since $N/z$ is vertically 3-connected for each $z \in X$, the graph $G/z$ is 2-connected for each $z \in X$. 
It follows that $G$ contains a path $P$ linking a pair of vertices of $C$ as shown in one of the graphs at centre or right in Figure \ref{figtired}. 
\begin{figure}[tbp] 
\begin{center} 
\includegraphics[scale=0.95]{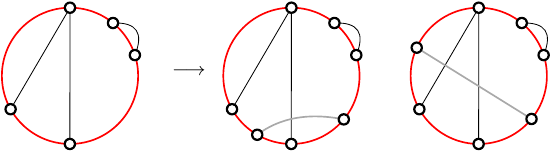}
\end{center} 
\caption{}
\label{figtired} 
\end{figure} 
By Claim \ref{frameifnoPf} every cycle of $C \cup X \cup P$ that contains an edge in $X$ is unbalanced, 
and every cycle in $C \cup P$ that does not have an edge in $X$ as a chord is unbalanced. 
Thus the biased subgraph of $(G,\Bb)$ induced by $C \cup X \cup P$ contains a biased graph representing $\overline{P}$ as a minor. 
\end{proof}

We can now prove the main result of this section. 

\begin{thm} \label{Rachel} 
\addcontentsline{toc}{subsubsection}{No excluded minor has rank more than 7}
Let $N$ be an excluded minor for the class of bicircular matroids. 
Then $N$ has rank less than 
{ten}. 
\end{thm}

\begin{proof} 
Suppose to the contrary that $N$ has rank greater than 
{nine}. 
Let $e \in E(N)$. 
By Lemma \ref{Nvert3}, $N$ is vertically 3-connected, so either $N \del e$ or $N/e$ is essentially 3-connected, by \hyperlink{bixby}{Bixby's Lemma}. 
Thus by Lemma \ref{zevon}, if $N \del e$ is not essentially 3-connected, then $N$ has an element $e'$ for which $N \del e'$ is essentially 3-connected. 
Moreover, Lemma \ref{zevon} guarantees that if $N \del e$ is type 3, then there is an element $e'$ for which $N \del e'$ is essentially 3-connected and type 3. 

So suppose first that $N$ has an element $e$ for which $N \del e$ is essentially 3-connected and type 3. 
By Lemma \ref{eggbert} we may further assume that $N \del e$ is represented by a 2-connected type 3 graph. 
Thus by Lemma \ref{billbill}, there is a bicircular twin $M$ for $N$ relative to a set $X$ of size three, 
satisfying the conditions of Theorem \ref{nomore}. 
Thus $N$ contain one of $M(2C_3)$, $M(K_4)$, $\overline{P}$, \emph{Pf}, $\overline{\underline{P}}$, or M2077 as a proper minor, a contradiction. 

So it must be the case that for every element $e \in E(N)$ for which $N \del e$ is essentially 3-connected, $N \del e$ is type 1 or type 2. 
Moreover, by Lemma \ref{zevon}, $N$ does not have an element whose contraction is essentially 3-connected and type 3. 
By \hyperlink{bixby}{Bixby's Lemma} and Lemma \ref{zevon},  $N$ has an element $e$ for which $N \del e$ is essentially 3-connected. 
Thus by Lemma \ref{guitar}, there is a bicircular twin $M$ for $N$ relative to a set $X$ of size three satisfying the conditions of Theorem \ref{nomore}. 
So again, we find $N$ contains one of $M(2C_3)$, $M(K_4)$, $\overline{P}$, \emph{Pf}, $\overline{\underline{P}}$, or M2077 as a proper minor, a contradiction. 
\end{proof}

\section{There are only a finite number of low-rank excluded minors} \label{greenhornet} 

In this section we prove the following. 

\begin{thm} \label{purple} 
There are only a finite number of excluded minors for the class of bicircular matroids that have rank less than 
{ten}. 
\end{thm}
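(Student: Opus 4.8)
The plan is to bound $|E(N)|$ as a function of $r(N)$; since $r(N) < 8$ by Theorem \ref{manlyman}, this leaves only finitely many matroids on a ground set of bounded size, and the theorem follows. First I would reduce to counting points: by Lemma \ref{Nvert3}, $N$ is vertically $3$-connected and every parallel class has size at most $2$, so $|E(N)| \le 2\,|E(\si N)|$, and it suffices to bound the number of points of the simple matroid $\si N$. Now fix any element $e$. Then $N \del e$ is bicircular, so $N \del e = B(G)$ for a connected graph $G$ with $|V(G)| \le r(N) \le 7$, and $|E(N)| - 1 = |E(G)|$. I would split the edges of $G$ into loops and non-loops. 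The loops incident to a common vertex form a single parallel class of $N \del e$, and parallel classes of $N \del e$ are inherited from $N$ (deletion creates no new parallel pairs), hence have size at most $2$; so $G$ has at most $2|V(G)| \le 14$ loops. For the non-loops, write $m_{uv}$ for the number of parallel edges joining a pair $\{u,v\}$, so that $|E(G)| \le 14 + \binom{7}{2}\,\max_{u,v} m_{uv}$.

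Thus everything comes down to bounding the multiplicity $m_{uv}$, equivalently bounding the number of points on a line (rank-$2$ flat) of $N$, since the $m_{uv}$ parallel $u$-$v$ edges form a restriction $U_{2,m_{uv}}$ of $N\del e$. This is the crux and the main obstacle: a long line is individually harmless (every $U_{2,m}$ is itself bicircular), so the bound must come from the interaction of the line with the rest of $N$ together with the excluded-minor hypothesis. My plan is to show that a sufficiently long line cannot survive in an excluded minor because two of its elements become interchangeable. Concretely, pick a line element $f$; then $N\del f$ is bicircular with some representation $G_f$, in which $L\del f$ still appears as a long family of parallel edges, while $N\del e$ exhibits $L$ as the parallel $u$-$v$ edges of $G$. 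Both $G\del f$ and $G_f\del e$ represent $N\del\{e,f\}$, which (for $r(N)\ge 8$ sized pieces, hence certainly here in the relevant rank range) is essentially $3$-connected of rank at least five, so by Lemma \ref{duckduck} and Theorem \ref{duck} they agree up to rolling, rotation, and replacement. Comparing the two representations pins down the location of $f$ relative to the line, and lets me reinsert $f$ into the representation of $N\del e$ as one more parallel $u$-$v$ edge, producing a bicircular representation of $N$ itself --- contradicting that $N$ is an excluded minor. The argument can only fail when $m_{uv}$ is below a small threshold, which is exactly the bound I want.

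I expect the bulk of the work, and the real difficulty, to lie in the case analysis of the previous paragraph: verifying across the possible types of representations (and across whether $e$ and $f$ are on or off the line, loops or non-loops, and how $\co$ interacts) that the recovered representation of $N\del\{e,f\}$ forces the missing element back onto the line, so that the reinsertion genuinely yields a bicircular representation of $N$. These are routine but lengthy checks using the representation results of Section \ref{representations}. Granting the resulting constant bound on each $m_{uv}$, we obtain $|E(G)| \le 14 + \binom{7}{2}\cdot c$ for an absolute constant $c$, hence $|E(N)| \le 15 + \binom{7}{2}\cdot c$, and more generally a bound of the form $|E(N)| \le 2r(N) + \binom{r(N)}{2}\,c + 1$. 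As there are only finitely many matroids on at most this many elements, there are only finitely many excluded minors of rank less than eight.
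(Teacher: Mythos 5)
Your high-level strategy is the same as the paper's: reduce to bounding $|E(N)|$ in terms of $r(N)$, observe that loops are controlled by the bound on parallel classes from Lemma \ref{Nvert3}, and identify the real crux as bounding the multiplicity $m_{uv}$, i.e.\ the number of points on a line of $N$. But your mechanism for that crux step has two genuine gaps. First, it runs through Lemma \ref{duckduck} and Theorem \ref{duck}, which require the matroid to be essentially $3$-connected \emph{and of rank at least five}. Neither hypothesis is available: $N \del \{e,f\}$ for arbitrarily chosen $e,f$ need not be essentially $3$-connected (your parenthetical appeal to ``the relevant rank range'' justifies nothing — the whole paper's Sections \ref{secfind2} and \ref{findingX} exist precisely because such elements must be chosen with great care), and, worse, Theorem \ref{purple} must handle excluded minors of ranks $3$ and $4$, exactly where representation uniqueness fails outright (the exceptional family $\mathscr G$ of Figures \ref{quartetopus1} and \ref{quartetopus2}). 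Second, even granting that the two representations agree on $N \del \{e,f\}$ and that the location of the missing element is pinned down, your reinsertion step cannot be completed: to conclude that the augmented graph $H$ represents $N$ you must verify that \emph{every} circuit of $N$ is a circuit-subgraph of $H$ and conversely, and circuits of $N$ containing both $e$ and $f$ are invisible in both $N \del e$ and $N \del f$, so the two deletions you use give no information about them.

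The paper's route around both obstacles is the clone machinery, which is the idea your proposal is missing. Two lonely elements on a line are clones (Propositions \ref{clonalpair} and \ref{green}): every cyclic flat containing one contains both. This is a purely matroid-theoretic fact requiring no connectivity and no rank hypothesis, so it works at ranks $3$ through $7$ alike; and it makes interchangeability literal — by Proposition \ref{blue}, a representation of $N \del a$ becomes a representation of $N \del b$ by mere relabelling. Lemma \ref{brown} then shows a line cannot carry four lonely elements $a,b,c,d$: from one graph $G_a$ for $N \del a$ one gets graphs $G_b, G_c$ for $N \del b, N \del c$ that are all restrictions of a single graph $G$ (namely $G_a$ with $a$ added parallel to $b,c,d$), and the circuit verification closes because $\{a,b,c\}$ is itself a circuit, so every other circuit of $N$ omits at least one of $a,b,c$ and is therefore visible in one of the three deletions. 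This yields multiplicity at most $3$ between any two vertices; together with Lemma \ref{yellow} bounding parallel pairs (hence loops), one gets $|E(N)| \leq 3\binom{r}{2} + r + 4$, and finiteness follows. Your two-deletion, uniqueness-based argument would need to be replaced by this (or an equivalent) three-deletion covering argument built on clones to constitute a proof.
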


A pair of elements $e, f$ of a matroid $M$ are \emph{clones} if the bijection from $E(M)$ to $E(M)$ that exchanges $e$ and $f$ while leaving every other element fixed is an automorphism of $M$. 
We also say that the set $\{e,f\}$ is a \emph{clonal pair}. 
The `clone' relation is clearly an equivalence relation on the ground set of a matroid; a \emph{clonal class} of a matroid is an equivalence class under this relation. 
A series pair of elements are clearly clones; 
the following characterisation follows immediately from the definition. 

\begin{prop} \label{clonalpair}
A pair of elements $\{e,e'\}$ are clones if and only if for every circuit $C$ with $|C \cap \{e,e'\}| = 1$, the symmetric difference $C \sd \{e,e'\}$ is a circuit.
\end{prop}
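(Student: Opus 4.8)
The plan is to unwind the definition of clone directly in terms of circuits. Let $\sigma \colon E(M) \to E(M)$ be the transposition that exchanges $e$ and $e'$ and fixes every other element. By definition $\{e,e'\}$ is a clonal pair precisely when $\sigma$ is an automorphism of $M$, and since the circuits determine the matroid, $\sigma$ is an automorphism if and only if it carries the collection of circuits of $M$ onto itself. The key simplification is that $\sigma$ is an involution, so $\sigma\inv = \sigma$; hence if $\sigma$ maps every circuit to a circuit, then $\sigma$ maps the collection of circuits onto (not merely into) itself, and conversely. Thus it suffices to show that the displayed condition is equivalent to the statement ``$\sigma(C)$ is a circuit for every circuit $C$.''

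First I would record exactly how $\sigma$ acts on a set $C \subseteq E(M)$, organised by the size of $C \cap \{e,e'\}$. If $|C \cap \{e,e'\}| \in \{0,2\}$, then $\sigma$ fixes $C$ setwise, so $\sigma(C) = C$ is a circuit whenever $C$ is. If instead $|C \cap \{e,e'\}| = 1$, say $e \in C$ and $e' \notin C$, then $\sigma(C) = (C - \{e\}) \cup \{e'\} = C \sd \{e,e'\}$, and symmetrically when $e' \in C$ and $e \notin C$. Consequently the only circuits whose image under $\sigma$ is in question are those meeting $\{e,e'\}$ in exactly one element, and for such a circuit $C$ the image $\sigma(C)$ is precisely $C \sd \{e,e'\}$.

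Combining these two observations finishes the argument: $\sigma$ maps every circuit to a circuit if and only if, for every circuit $C$ with $|C \cap \{e,e'\}| = 1$, the set $C \sd \{e,e'\}$ is again a circuit, which is exactly the stated condition. The proof is essentially bookkeeping; the only point requiring a moment's care is the involution remark, which is what lets us replace the ostensibly two-sided requirement that $\sigma$ permute the circuits by the one-sided requirement that it send circuits to circuits.
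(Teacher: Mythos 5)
Your proof is correct and follows exactly the route the paper has in mind: the paper states this proposition without proof, remarking only that it ``follows immediately from the definition,'' and your argument is precisely that direct unwinding (automorphism $=$ circuits map to circuits, case analysis on $|C \cap \{e,e'\}|$, with the involution remark properly justifying the one-sided condition). Nothing is missing; you have simply written out the bookkeeping the authors left implicit.
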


A flat of a matroid is \emph{cyclic} if it is a union of circuits. 
By Proposition \ref{clonalpair}, $e$ and $e'$ are {clones} if and only if every cyclic flat of $M$ that contains one of $e$ or $e'$ contains both $e$ and $e'$. 

A \emph{line} in a matroid is a rank-2 flat. 
A line is \emph{non-trivial} if it contains at least three rank-1 flats. 
Let $M$ be a bicircular matroid represented by the graph $G$, and let $L$ be a non-trivial line of $M$. 
Then the subgraph $G[L]$ induced by $L$ consists of a pair of vertices $u,v$ together with all edges both of whose ends are in $\{u,v\}$. 
We say a line is \emph{adjacent} to another line if their union has rank 3. 
If $L$ and $L'$ are a pair of adjacent lines of $M$, then the set of elements in $L \cap L'$ is precisely the set of loops incident to the vertex $x$, where $V_G(L) \cap V_G(L') = \{x\}$. 
Given a non-trivial line $L$ in a bicircular matroid represented by the graph $G$, let us call the unique pair of vertices $u,v$ in the subgraph of $G$ induced by $L$ the \emph{ends} of the line $L$ \emph{in} $G$. 

Call an element of a non-trivial line \emph{lonely} if it is not also contained in an adjacent line nor properly contained in a rank-1 flat. 
Observe that if $M$ is a bicircular matroid, represented by the graph $G$, then a set of at least three edges sharing a common pair of distinct ends are contained in a non-trivial line and each edge in this set is lonely. 

\begin{prop} \label{green} 
Let $L$ be a non-trivial line in a matroid $M$ and let $e$ and $e'$ be two lonely elements in $L$. 
Then $e$ and $e'$ are clones. 
\end{prop}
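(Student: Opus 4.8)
The plan is to verify that $e$ and $e'$ are clones by means of the cyclic-flat criterion recorded just after Proposition \ref{clonalpair}: it suffices to show that every cyclic flat of $M$ containing one of $e, e'$ contains both. Since the hypotheses on $e$ and $e'$ are symmetric, I would prove only that every cyclic flat $F$ with $e \in F$ satisfies $e' \in F$, and then obtain the reverse implication by interchanging the roles of $e$ and $e'$.

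So fix a cyclic flat $F$ with $e \in F$. First note that $e$ cannot be a loop: every rank-$1$ flat of the non-trivial line $L$ is the closure of a non-loop, and a loop would lie in each of these, hence be properly contained in a rank-$1$ flat, contradicting loneliness. Now consider the flat $F \cap L$, which has rank at most $2$. If $r(F \cap L) = 2$ then $F \cap L = L$, so $L \subseteq F$ and in particular $e' \in F$, as required. Otherwise $r(F \cap L) \le 1$; since $e \in F \cap L$ is a non-loop we get $r(F \cap L) = 1$, so $F \cap L = \cl_M(\{e\})$. The second loneliness condition (that $e$ is not properly contained in a rank-$1$ flat) then says that $e$ is the only non-loop of $L$ lying in $F$.

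It remains to rule out this second case, and this is the crux of the argument. Because $F$ is cyclic it is a union of circuits, so $e$ lies in some circuit $C \subseteq F$; choose an element $f \in C \setminus \{e\}$. Then $f$ is a non-loop of $F$, so by the previous paragraph $f \notin L$. Set $L' = \cl_M(\{e,f\})$. Since $f$ is not parallel to $e$ (it lies outside $L \supseteq \cl_M(\{e\})$), $L'$ is a rank-$2$ flat, i.e.\ a line, containing $e$. I would then compute $r(L \cup L') = 3$: the lower bound $r(L \cup L') \ge r(L \cup \{f\}) = 3$ follows from $f \notin L$ and monotonicity, while submodularity together with $r(L \cap L') \ge 1$ (as $e \in L \cap L'$) gives $r(L \cup L') \le r(L) + r(L') - r(L \cap L') = 3$. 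Hence $L'$ is a line adjacent to $L$ containing $e$, contradicting the first loneliness condition. This contradiction forces the first case, so $e' \in F$, completing the argument.

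The main obstacle is this last step — exhibiting the adjacent line $L'$ and pinning down its rank interaction with $L$; the bookkeeping about loops and parallel elements (ensuring $f \notin L$ and that $L'$ genuinely has rank $2$) is precisely where both loneliness conditions get used, and care is needed so that neither a loop nor an element parallel to $e$ slips in to spoil the rank count.
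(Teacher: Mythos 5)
Your proof is correct and takes essentially the same approach as the paper: the paper's entire proof of Proposition \ref{green} is the one-sentence assertion (via the cyclic-flat reformulation of Proposition \ref{clonalpair}) that every cyclic flat containing one of $e$ or $e'$ contains both, which is exactly the statement you verify. Your write-up simply supplies the details the paper leaves to the reader --- the case analysis on $r(F\cap L)$ and the construction of the adjacent line $\cl_M(\{e,f\})$ --- and these details are sound.
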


\begin{proof} 
Every cyclic flat of $M$ that contains one of $e$ or $e'$ contains both $e$ and $e'$. 
\end{proof} 

\begin{prop} \label{blue} 
Let $L$ be a non-trivial line in a matroid $M$ and let $e, e' \in L$ be lonely elements of $L$. 
If $M \del e$ is bicircular, represented by the graph $G$, then $M \del e'$ is bicircular and is represented by the graph $G'$ obtained by relabelling $e'$ in $G$ by $e$. 
\end{prop}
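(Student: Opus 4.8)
The plan is to exploit the fact, established in Proposition \ref{green}, that $e$ and $e'$ are clones. First I would let $\sigma \colon E(M) \to E(M)$ be the transposition exchanging $e$ and $e'$ and fixing every other element. Since $e$ and $e'$ are clones, $\sigma$ is by definition an automorphism of $M$; in particular, a set $S$ is a circuit of $M$ if and only if $\sigma(S)$ is.

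Next I would check that the restriction $\phi = \sigma|_{E(M) - e}$ is an isomorphism from $M \del e$ onto $M \del e'$. Indeed, $\phi$ is a bijection from $E(M) - e$ onto $E(M) - e'$ sending $e' \mapsto e$ and fixing every other element. A set $S \subseteq E(M) - e$ is a circuit of $M \del e$ precisely when it is a circuit of $M$ not containing $e$; then $\sigma(S)$ is a circuit of $M$, and since $e \notin S$ we have $e' = \sigma(e) \notin \sigma(S)$, so $\sigma(S) = \phi(S)$ is a circuit of $M \del e'$. The converse direction is symmetric, so $\phi$ is an isomorphism.

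Finally I would transport the graph $G$ across $\phi$. Because $G$ represents $M \del e$ we have $E(G) = E(M) - e$ and $B(G) = M \del e$; note that $e' \in E(G)$ since $e \neq e'$. Let $G'$ be the graph obtained from $G$ by relabelling the edge $e'$ as $e$, leaving all vertices and all other edges unchanged, so that $E(G') = E(M) - e'$ and this label change realises $\phi$ on edge sets. Then the circuits of $B(G')$ are exactly the images under $\phi$ of the circuits of $B(G) = M \del e$, which by the previous paragraph are precisely the circuits of $M \del e'$. Hence $B(G') = M \del e'$, so $M \del e'$ is bicircular and $G'$ is the desired representation.

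There is no serious obstacle here; the result is a direct consequence of Proposition \ref{green}. The only point requiring care is the bookkeeping in the last step: one must confirm that relabelling a single edge of a bicircular representation corresponds exactly to applying the clone-induced isomorphism $\phi$, and in particular that $e'$ genuinely appears as an edge of $G$ (which holds because the ground set of $M \del e$ contains $e'$).
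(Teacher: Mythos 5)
Your proof is correct and follows the same route as the paper: the paper's entire proof is the one-line observation that $e$ and $e'$ are clones by Proposition \ref{green}, with the transport of the representation across the clone-swapping isomorphism left implicit. Your write-up simply makes explicit the bookkeeping (the restricted automorphism $\phi$ and the relabelling of the edge $e'$) that the paper takes for granted.
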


\begin{proof} 
By Proposition \ref{green}, $e$ and $e'$ are clones. 
\end{proof} 

\begin{lem} \label{brown} 
Let $N$ be an excluded minor of rank at least three. 
Then $N$ does not have as a restriction a line containing more than three lonely elements. 
\end{lem}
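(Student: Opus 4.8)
The plan is to argue by contradiction: suppose $N$ has a restriction $N|L$ that is a non-trivial line carrying four lonely elements, and then reconstruct a bicircular representation of all of $N$.

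First I would understand how the lonely elements of $L$ appear in a bicircular representation. Fix a lonely element $e_1$; since $N$ is an excluded minor, $N\del e_1$ is bicircular, and as $N$ is vertically $3$-connected (Lemma~\ref{Nvert3}) and $e_1$ lies on a line, $N\del e_1$ is connected and so has a connected representation $G$. Let $u,v$ be the ends of $L$ in $G$; by the structural description of lines, every element of $L\del e_1$ is an edge inside $\{u,v\}$, that is, a $uv$-edge or a loop at $u$ or at $v$. The three remaining lonely elements occupy three distinct points of this line, so at most one is a loop at $u$ and at most one a loop at $v$. The key point is that not two of them can be loops: if $a$ were a lonely loop at $u$ then $u$ could have no $G$-edge $g$ to a third vertex $w$, for otherwise $\cl_N(\{a,g\})$ would be a line of $N$ adjacent to $L$ and containing $a$ (note $e_1\notin\cl_N(\{a,g\})$, since $e_1$ lies inside $\{u,v\}$ while this line lies inside $\{u,w\}$), contradicting that $a$ is lonely in $N$. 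Symmetrically a lonely loop at $v$ would isolate $v$ from third vertices; were there lonely loops at both $u$ and $v$, the pair $\{u,v\}$ would be an entire component of the connected graph $G$, forcing $r(N\del e_1)=2$ and hence $r(N)=2$, against $r(N)\ge 3$. Thus at least two of the three remaining lonely elements, say $e_2,e_3$, are parallel $uv$-edges of $G$.

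Next I would build the candidate representation. Let $H$ be obtained from $G$ by adding $e_1$ as a further edge parallel to $e_2$ and $e_3$. Then $H\del e_1=G$ represents $N\del e_1$, while for $i\in\{2,3\}$ the graph $H\del e_i$ is precisely $G$ with the edge $e_i$ relabelled $e_1$; since $e_1$ and $e_i$ are both lonely, Proposition~\ref{blue} tells us this represents $N\del e_i$. Hence $B(H)\del e_i=N\del e_i$ for $i\in\{1,2,3\}$, so $r_N$ and $r_{B(H)}$ agree on every subset of $E$ omitting at least one of $e_1,e_2,e_3$; in particular every disagreement set contains $\{e_1,e_2,e_3\}$. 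To finish, observe that $\{e_1,e_2,e_3\}$ consists of three distinct points of the rank-$2$ flat $L$, hence is a circuit of $N$, and in $H$ it is three parallel edges, a bicycle, hence a circuit of $B(H)$. So any disagreement set, containing $\{e_1,e_2,e_3\}$, is dependent in both matroids; but by Proposition~\ref{bob} a minimal disagreement set must be independent in one of $N,B(H)$. Since none can be, no disagreement set exists, so $N=B(H)$ is bicircular, contradicting that $N$ is an excluded minor.

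I expect the crux to be the loop-exclusion in the first step: Propositions~\ref{green} and~\ref{blue} only produce genuine parallel $uv$-edges — on which the consistent placement of $e_1$ in all three deletions $H\del e_i$ depends — once we know the relevant lonely elements are realised as edges rather than loops, and this is exactly where the definition of \emph{adjacent line} together with $r(N)\ge 3$ must be used. Once three parallel $uv$-edges are in hand, the clone/relabelling machinery of Proposition~\ref{blue} and the minimal-disagreement-set reduction of Proposition~\ref{bob} combine routinely.
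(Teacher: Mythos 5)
Your proposal is correct and follows essentially the same route as the paper's own proof: delete one lonely element, use the fact that the remaining lonely elements of the line sit as edges on the vertex pair $\{u,v\}$ in a representation of the deletion, rule out problematic loop placements via the loneliness/adjacent-line argument, re-insert the deleted element as a parallel $u$-$v$ edge, and use Proposition~\ref{blue} (clone relabelling) to verify the construction. Even your crux step is the paper's crux step: the paper's proof likewise asserts that a lonely element cannot be a loop unless its vertex has no edges to third vertices (phrased there as ``none of $b,c,d$ are loops unless $E(L-a)$ is a pendant set''), justified by the same appeal to adjacent lines that you make. Two execution details differ, both mildly in your favour. First, the paper needs all three remaining lonely elements to be $u$-$v$ links, and so must handle a surviving loop via the pendant-set case and a rolling operation; you only need two links, and you dispose of the bad case (lonely loops at both $u$ and $v$) by the connectivity/rank argument, avoiding rolling altogether. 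Second, your endgame differs in packaging: where the paper compares the circuit families of $N$ and $B(G)$ directly (via the three relabelled graphs $G_a$, $G_b$, $G_c$), you observe $B(H)\del e_i = N \del e_i$ for $i=1,2,3$, note $\{e_1,e_2,e_3\}$ is a circuit of both matroids, and invoke the minimal-disagreement-set fact (Proposition~\ref{bob}, which the paper proves independently in Section~\ref{warthog}, so there is no circularity); this is equivalent in substance to the paper's circuit check but is a clean encapsulation.
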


\begin{proof} 
Suppose to the contrary that $N$ has a line $L$ containing at least four lonely elements $a,b,c,d$. 
Let $G_a$ be a graph representing $N \del a$. 
Since $L-a$ is a non-trivial line of $N \del a$, $G_a[L-a]$ consists of a pair of vertices $u,v$ together with all edges both of whose ends are in $\{u,v\}$. 
Since none of $b,c,d$ are contained in an adjacent line of $N \del a$, none of $b,c,d$ are loops unless $E(L-a)$ is a pendant set of edges, in which case at most one of these edges may be a loop incident to the pendant vertex; if this is the case then apply a rolling operation so that all edges in $E(L-a)$ are $u$-$v$ edges. 
Let $G_b$ be the graph obtained from $G_a$ by relabelling edge $b$ by $a$, and 
let $G_c$ be the graph obtained from $G_a$ by relabelling edge $c$ by $a$. 
By Proposition \ref{blue}, $G_b$ represents $N \del b$ and $G_c$ represents $N \del c$. 

Let $G$ be the graph obtained from $G_a$ by adding $a$ as an edge linking the ends of $L$, in parallel with edges $b$, $c$, and $d$. 
Consider the circuits of $N$ and $B(G)$: 
\begin{itemize} 
\item $\{a,b,c\}$ is a circuit of both $N$ and $B(G)$;  
\item each circuit of $N$ that contains at most two of $a$, $b$, or $c$, is a circuit of one of $B(G_a)$, $B(G_b)$, or $B(G_c)$; since each of $G_a$, $G_b$, and $G_c$ is equal to the restriction of $G$ to their respective ground sets, each such circuit is a circuit-subgraph of $G$; and, 
\item conversely, every circuit-subgraph of $G$ containing at most two of $a$, $b$, or $c$ is a circuit-subgraph of one of $G_a$, $G_b$, or $G_c$, and so is a circuit of $N$. 
\end{itemize} 
Thus the circuits of $N$ and $B(G)$ coincide, so $G$ represents $N$, a contradiction. 
\end{proof} 

\begin{lem} \label{yellow} 
\addcontentsline{toc}{subsubsection}{$N$ has at most two pairs of parallel elements}
Let $N$ be an excluded minor with rank at least three. 
Then $N$ contains at most three pairs of elements in parallel. 
\end{lem}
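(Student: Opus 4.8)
The plan is to argue by contradiction. Suppose $N$ has at least four pairs of parallel elements, say $\{e_i,f_i\}$ for $i \in \{1,2,3,4\}$; by Lemma \ref{Nvert3} every parallel class has size exactly two, so these are four distinct $2$-circuits. Recall that in any bicircular representation a $2$-circuit appears as a pair of loops incident to a common vertex (this is exactly the observation used in the proof of Lemma \ref{Nvert3}). I will show that under this hypothesis $N$ is in fact bicircular, contradicting that it is an excluded minor.

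First I would reduce to a statement about a single deletion. Since $\{e_1,f_1\}$ is a parallel pair, $N$ is the parallel extension of the bicircular matroid $N \del e_1$ by an element placed parallel to $f_1$; as a parallel extension of an element is unique, it suffices to find a representation $G_1$ of $N \del e_1$ in which $f_1$ is a loop. Indeed, given such a $G_1$, adjoining $e_1$ as a second loop at the vertex carrying $f_1$ produces a graph $G$ in which $\{e_1,f_1\}$ is a $2$-circuit and $G \del e_1 = G_1$, whence $B(G) = N$ and we are done.

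The heart of the argument is therefore to produce a representation of $N \del e_1$ in which $f_1$ is a loop, and here I would bring in a second pair. The matroid $N \del e_2$ is bicircular, and in it $\{e_1,f_1\}$ remains a $2$-circuit, so any representation $G_2$ of $N \del e_2$ carries $e_1$ and $f_1$ as loops at a common vertex; in particular $f_1$ is a loop in $G_2$. Now $N \del \{e_1,e_2\}$ is represented both by $G_1 \del e_2$ and by $G_2 \del e_1$, and $f_1$ is a loop in the latter. The surviving pairs $\{e_3,f_3\}$ and $\{e_4,f_4\}$ are still parallel pairs of $N \del \{e_1,e_2\}$, so this matroid has a $2$-circuit, is not uniform, and hence neither it nor its representations lie in the exceptional family $\mathscr G$. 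Choosing the labels so that $N \del \{e_1,e_2\}$ is connected, Theorem \ref{goose} gives $G_1 \del e_2 = G_2 \del e_1$ up to rolling, rotation, and replacement. Since a lone loop is a trivial balloon, it is untouched by any of these operations, so $f_1$ remains a loop in $G_1 \del e_2$, and therefore in $G_1$. This is the representation demanded in the previous paragraph, and the contradiction follows.

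I expect the main obstacle to be the transfer carried out in the last paragraph, since it rests on representation uniqueness, which in turn requires $N \del \{e_1,e_2\}$ to be connected and to avoid the low-rank exceptional family $\mathscr G$. This is exactly where the hypothesis of four pairs is spent: the surviving pairs force $N \del \{e_1,e_2\}$ to be non-uniform, hence outside $\mathscr G$, while the freedom in which two pairs to break should be used to keep the double deletion connected — the delicate point being that $N \del e_1$ may acquire a series pair and a careless choice of $e_2$ could then create a coloop. The remaining verifications, namely that a parallel extension is unique and that rolling, rotation, and replacement preserve loops, are routine.
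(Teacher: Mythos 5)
Your overall route is the same as the paper's: delete one element from each of two parallel pairs, compare the two resulting representations of $N \del \{e_1,e_2\}$ via Theorem \ref{goose}, and extract a contradiction from whether the deleted element's partner is a loop (the paper phrases the same contradiction as: $e'$ is a loop in $G_f \del e$ but not in $G_e \del f$). The gap is in your transfer step, which rests on a false principle: it is not true that a lone loop ``is untouched by'' rolling, rotation, and replacement. A rolling operation can convert a lone loop into a non-loop edge and conversely. Concretely, $U_{2,3}$ is connected and has no representation in $\mathscr G$, yet it is represented both by three parallel edges (no loops at all) and by a loop together with two parallel edges (a lone loop); by Theorem \ref{goose} these must be related by the allowed operations, namely by rolling a single edge into a lone loop. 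The paper itself performs exactly such a move in the proof of Lemma \ref{brown}, where a loop incident to a pendant vertex is turned into a $u$-$v$ edge by ``a rolling operation''. So loop-ness of a fixed element is simply not an invariant of the representations of a connected matroid outside $\mathscr G$, and the two hypotheses you verify --- connectivity and avoidance of $\mathscr G$ --- cannot move ``$f_1$ is a loop'' from $G_2 \del e_1$ to $G_1 \del e_2$.

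What is needed, and what the paper's proof supplies, is that \emph{no} rolling, rotation, or replacement can be applied to these particular graphs, so that Theorem \ref{goose} gives the exact equality $G_1 \del e_2 = G_2 \del e_1$. Two facts, both already on your table, do this. First, since $e_1$ and $e_2$ lie in $2$-element parallel classes, $N \del \{e_1,e_2\}$ is still vertically $3$-connected (this also makes your connectivity hedge unnecessary); hence by Proposition \ref{elise} neither graph has a line or a balloon, so replacement is vacuous and there are no proper $1$-separations. Second, each of the two graphs has loops at three distinct vertices ($f_2$, respectively $f_1$, together with the loop pairs representing $\{e_3,f_3\}$ and $\{e_4,f_4\}$), so deleting any one vertex --- or any two, for the rotation structure --- leaves a loop; hence neither graph admits the apex or rotation structure, i.e.\ both are type 3 representations, and no rolling or rotation applies. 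With these two points inserted your argument closes and becomes essentially the paper's. A further minor slip: ``not uniform, hence not in $\mathscr G$'' is not a valid implication, since the members of $\mathscr G$ with $m \geq 7$ edges represent non-uniform matroids; the correct observation, which the paper uses, is that graphs in $\mathscr G$ are loopless, so a matroid with a $2$-circuit (a pair of loops at a common vertex in any representation) has no representation in $\mathscr G$.
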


\begin{proof} 
Suppose to the contrary that $N$ contain four pairs of elements in parallel, say $\{e,e'\}$, $\{f,f'\}$, $\{g,g'\}$, and $\{h,h'\}$. 
Let $G_e$ be a graph representing $N \del e$. 
Because $\{f,f'\}$, $\{g,g'\}$, and $\{h,h'\}$ remain parallel pairs in $N \del e$, each pair is represented by a pair of loops incident to a vertex of $G_e$; because the three pairs are distinct parallel classes, the pairs are incident to distinct vertices. 
Thus $G_e$ is a type 3 representation for $N \del e$. 
The edge $e'$ is not a loop in $G_e$, because if so, say incident to vertex $v$, then adding $e$ as a loop incident to $v$ would yield a bicircular graph representation for $N$. 
Since $N \del e$ is vertically 3-connected, $G_e$ has no lines or balloons, and since $G_e$ has loops, $G_e$ is not in $\mathscr G$. 
Thus by Theorem 
\ref{goose}, $G_e$ is the unique bicircular representation for $N \del e$. 

Let $G_f$ be a graph representing $N \del f$. 
Applying the argument of the previous paragraph, $G_f$ is the unique bicircular representation for $N \del f$, and each pair $\{e,e'\}$, $\{g,g'\}$, and $\{h,h'\}$ is represented as a pair of loops, no pair incident to the same vertex as another, while $f'$ is not a loop. 

Now consider $N \del \{e,f\}$. 
Because $e$ and $f$ are each contained in a non-trivial parallel class of $N$, $N \del \{e,f\}$ remains vertically 3-connected. 
Moreover, each of $G_e \del f$ and $G_f \del e$ contain loops incident to at least three different vertices (namely, $g$ and $g'$, $h$ and $h'$, along with either $f'$ or $e'$), so each is a type 3 representation for $N \del \{e,f\}$. 
Since $N \del \{e,f\}$ remains vertically 3-connected, neither graph contain a line or a balloon. 
Thus, since neither $G_e \del f$ nor $G_f \del e$ are in $\mathscr G$, by Theorem 
\ref{goose}, 
$G_e \del f = G_f \del e$. 
But this is impossible, because $e'$ is not a loop in $G_e \del f$ while $e'$ is a loop in $G_f \del e$. 
\end{proof} 

\begin{lem} \label{pen} 
Let $N$ be a rank-$r$ excluded minor for the class of bicircular matroids. 
Then $|E(N)| \leq 3 {r \choose 2} + r + 4$. 
\end{lem}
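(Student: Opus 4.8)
The plan is to bound $|E(N)|$ by deleting a single well-chosen element, representing the resulting bicircular matroid by a graph on $r$ vertices, and counting its loops and its non-loop edges separately.

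First I would dispose of the degenerate cases. Since $N$ is vertically $3$-connected (Lemma~\ref{Nvert3}) it has no coloop, so for every $e$ the deletion $N\del e$ has rank $r$ and, being a proper minor of an excluded minor, is bicircular. If every element of $N$ lies in a parallel pair then, by Lemma~\ref{yellow} together with Lemma~\ref{Nvert3}, $N$ has at most three parallel pairs each of size two, so $|E(N)|\le 6$ and the bound holds. Hence I may choose an element $e$ lying in no parallel pair. Writing $N\del e=B(G)$ with $G$ connected, either $G$ is acyclic — in which case $N\del e$ is free and $|E(N)|\le r+2$ — or $G$ has a cycle and therefore exactly $r$ vertices, so that $|E(N)|=|E(G)|+1$ and it remains to bound the loops and the non-loop edges of $G$.

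For the loops, two loops at a common vertex form a parallel pair of $B(G)=N\del e$, while three loops at one vertex would be a parallel class of size three, impossible by Lemma~\ref{Nvert3}. Every parallel pair of $N\del e$ is a parallel pair of $N$, so by Lemma~\ref{yellow} at most three vertices of $G$ carry two loops and the remaining vertices carry at most one; thus $G$ has at most $r+3$ loops. For the non-loop edges I would show that at most three join any pair of vertices, so there are at most $3\binom{r}{2}$ of them, whence
\[
|E(N)|=|E(G)|+1\le 3\binom{r}{2}+(r+3)+1=3\binom{r}{2}+r+4.
\]
Suppose instead that four edges $e_1,e_2,e_3,e_4$ join $u$ to $v$ in $G$. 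By the observation preceding Lemma~\ref{brown} these are four lonely elements of a non-trivial line of $N\del e$, and the goal is to argue that they remain four lonely elements of a non-trivial line of $N$, directly contradicting Lemma~\ref{brown}.

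The heart of the matter, and the step I expect to be the main obstacle, is precisely this transfer of loneliness from $N\del e$ to $N$. Because $e$ was chosen outside every parallel pair, reinstating $e$ creates no new parallel partner for any $e_i$, so the only way some $e_i$ could fail to be lonely in $N$ is for $e$ to place it on a new non-trivial line, necessarily one through $e$; concretely this forces a third element $y$ with $\{e,e_i,y\}$ a circuit of $N$ while $\{e,e_1,e_2\}$ stays independent. To rule this out I would mimic the reconstruction in the proof of Lemma~\ref{brown}: delete one of the parallel edges, say $e_1$, in place of $e$, compare the two bicircular representations of $N\del\{e,e_1\}$ by way of the representation results of Section~\ref{representations} (in particular Lemma~\ref{duckduck}, once one checks the relevant minor is essentially $3$-connected of rank at least five, with the low-rank cases treated by hand), and reassemble from them a single graph $H$ with $B(H)=N$. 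Since $N$ is an excluded minor this is the desired contradiction, establishing that at most three edges join any pair of vertices and completing the count.
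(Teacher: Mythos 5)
Your counting scheme is exactly the paper's: delete a single element $e$, note that $G$ has $r$ vertices, bound the loops by $r+3$ using Lemmas \ref{Nvert3} and \ref{yellow}, bound the number of edges joining any pair of vertices by three via the lonely-element observation and Lemma \ref{brown}, and add one for $e$. Where you diverge is in how the multi-edge bound is justified. The paper does it in one sentence: since $N$ has no line with more than three lonely elements (Lemma \ref{brown}), ``neither does $N \del e$'' --- that is, it takes for granted that four lonely elements of a line of $N \del e$ remain lonely in $N$. You are right that this upward transfer is the only real content of the step, and your choice of $e$ outside every parallel pair cleanly kills one of the two failure modes (some $e_i$ becoming parallel to $e$ in $N$, hence no longer lonely), a mode the paper's ``arbitrarily choose an element $e$'' does not address.

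Your handling of the other failure mode, however --- a circuit $\{e,e_i,y\}$ of $N$, which places $e_i$ on a non-trivial line through $e$ adjacent to $\cl_N(\{e_1,\dots,e_4\})$ and so destroys its loneliness --- is a plan rather than a proof, and the plan as described would not go through. Lemma \ref{duckduck} requires the matroid it is applied to be essentially 3-connected of rank at least five. Nothing in your setup guarantees that $N \del \{e,e_1\}$ is essentially 3-connected: vertical 3-connectivity of $N$ need not survive even one deletion (the paper spends Sections \ref{secfind2} and \ref{findingX}, under a rank-at-least-seven hypothesis, just to produce a single element whose deletion is essentially 3-connected). Moreover, Lemma \ref{pen} must hold for $r=3$ and $r=4$, ranks that Theorem \ref{purple} actually uses; there the rank-five hypothesis is unattainable and bicircular representations are genuinely non-unique (Figures \ref{quartetopus1} and \ref{quartetopus2}), so ``low-rank cases treated by hand'' is precisely where your method has no traction. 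Finally, the step ``reassemble from them a single graph $H$ with $B(H)=N$'' carries all the weight and is not described: the analogous reconstruction in the proof of Lemma \ref{brown} is powered by the clone property of the four lonely elements (Propositions \ref{green} and \ref{blue}), which is exactly what you do not have here, since the loneliness of $e_1$ in $N$ is the point in question. So the central step of your argument --- ruling out circuits of the form $\{e,e_i,y\}$ --- remains a genuine gap.
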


\begin{proof} 
Arbitrarily choose an element $e \in E(N)$, and let $G$ be a graph representing $N \del e$.  
Then $|V(G)| = r$.  
By Lemma \ref{brown}, $N$ does not have a line containing more than three lonely elements, so neither does $N \del e$. 
Thus $G$ has no more than three edges linking each pair of its vertices. 
By Lemma \ref{Nvert3} every parallel class of $N$ has at most two elements, no vertex of $G$ has more than two incident loops. 
By Lemma \ref{yellow}, $N$ has no more than three pairs of elements in parallel, so $N \del e$ has no more than three pairs of elements in parallel. 
Thus $G$ has at most three vertices that have two incident loops. 
Thus $|E(G)| \leq 3 {r \choose 2} + r + 3$, and so $|E(N)| \leq 3 {r \choose 2} + r + 4$.  
\end{proof} 

Theorem \ref{purple} now follows easily. 

\begin{proof}[Proof of Theorem \ref{purple}] 
All non-empty rank-0 and all rank-1 matroids are bicircular. 
A rank-2 matroid is bicircular if and only if it has at most two rank-1 flats of size greater than one. 
It is easy to see that a rank-2 matroid has more than two rank-1 flats containing at least two elements if and only if it contains $M(2C_3)$ as a minor. 
So $M(2C_3)$ is the only excluded minor of rank 2. 
For 
{$r \in \{3, 4, \ldots, 9\}$}, apply Lemma \ref{pen}: 
an excluded minor of rank $r$ for the class of bicircular matroids has at most 
$3 {r \choose 2} + r + 4$ elements. 
\end{proof} 


\appendix 
\section*{Appendix}
\section{Excluded minors of small rank}\label{appendix}

Mayhew and Royle have catalogued all matroids on up to nine elements. 
Their computation is described in~\cite{MR2389607}; the dataset is available at \url{https://doi.org/10.5281/zenodo.6825419}. 
We identified twenty-seven minor-minimal non-bicircular matroids among those in the dataset, as follows.  
First, we generated a complete list of all connected graphs with up to nine edges (loops and multiple edges permitted). 
Next we identified the bicircular matroid represented by each of these graphs in the dataset, and declared a single matroid loop to be bicircular. 
We then declared a matroid in the dataset to be bicircular exactly when each of its components had been identified as bicircular. 
Excluded minors in the dataset were then identified as just those non-bicircular matroids all of whose single-element deletions and single-element contractions are bicircular. 

The table on the following page exhibits the twenty-seven excluded minors so discovered. 
Points in geometric representations are shown as solid discs; a pair of points in parallel is indicated by a circled solid disc. 
The M-numbers labelling the matroids are their unique identifiers in the dataset. 


\begin{figure}[htbp]
\begin{center}
\includegraphics[scale=1.2,trim={0 6.5cm 8cm 8cm},clip]{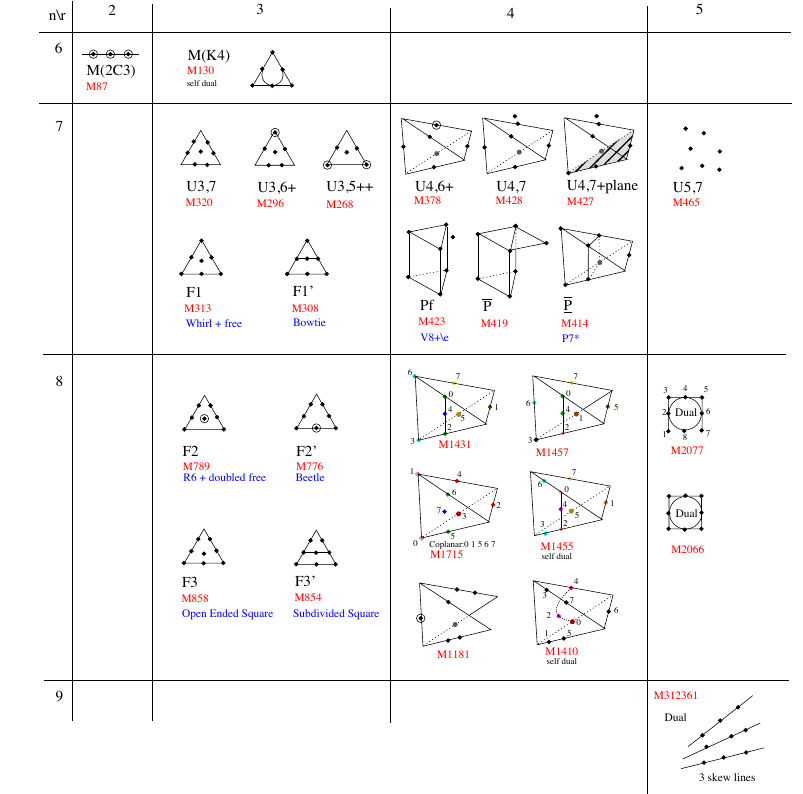}
\caption{Excluded minors for the class of bicircular matroids with at most nine elements.}
\end{center}
\label{bungalowbill}
\end{figure}

\newpage 

\section*{Acknowledgments} 
Our sincere thanks to the referees, whose comments and suggestions greatly improved the paper. 

\bibliographystyle{amsplain}


\begin{aicauthors}
\begin{authorinfo}[matt]
  Matt DeVos\\
  Department of Mathematics, Simon Fraser University\\
  Burnaby, British Columbia, Canada\\
  mdevos\imageat{}sfu\imagedot{}ca 
\end{authorinfo}
\begin{authorinfo}[daryl]
  Daryl Funk\\
  Department of Mathematics, Douglas College\\
  New Westminster, British Columbia, Canada\\
  funkd\imageat{}douglascollege\imagedot{}ca 
\end{authorinfo}
\begin{authorinfo}[luis]
  Luis Goddyn\\
  Department of Mathematics, Simon Fraser University\\
  Burnaby, British Columbia, Canada\\
  goddyn\imageat{}sfu\imagedot{}ca
\end{authorinfo}
\begin{authorinfo}[gordon]
  Gordon Royle\\
  School of Mathematics and Statistics, University of Western Australia\\
  Perth, Australia\\
    gordon\imagedot{}royle\imageat{}uwa\imagedot{}edu\imagedot{}au 
\end{authorinfo}
\end{aicauthors}

\end{document}